\documentclass[12pt]{article}

\usepackage{graphicx}
\usepackage{latexsym,amssymb}
\usepackage{amsthm}
\usepackage{indentfirst}
\usepackage{amsmath}
\usepackage{color}
\usepackage{xcolor}
\usepackage{fourier}
\usepackage[colorlinks=true,backref=page]{hyperref}
\usepackage[colorlinks=true]{hyperref}
\usepackage[all]{xy}

\usepackage[a4paper,text={160true mm,230true mm},centering,top=35true mm,head=5true mm,headsep=2.5true mm,foot=8.5true mm]{geometry}

\newtheorem{theoremalph}{Theorem}

\newtheorem*{Main Theorem}{Main Theorem}

\newtheorem{Theorem}{Theorem}[section]
\newtheorem*{Theorem A}{Theorem A}
\newtheorem*{Theorem A'}{Theorem A'}
\newtheorem*{Theorem B'}{Theorem B'}

\newtheorem{Definition}[Theorem]{Definition}
\newtheorem{Proposition}[Theorem]{Proposition}
\newtheorem{Lemma}[Theorem]{Lemma}

\newtheorem{Remark}[Theorem]{Remark}

\newtheorem{Remark-numbered}[Theorem]{Remark}
\newtheorem{Corollary}[Theorem]{Corollary}

\newtheorem*{Claim}{Claim}
\newtheorem{Claim-numbered}[Theorem]{Claim}

 \def\NN{{\mathbb N}} 

 \def\RR{{\mathbb R}}

 \def\ZZ{{\mathbb Z}}

\def\cB{{\cal B}}    
    
   \def\cP{{\cal P}}

\newcommand{\loc}{\operatorname{loc}}

\def\dim{\operatorname{dim}}

\def\Jac{\operatorname{Jac}}

\def\Leb{\operatorname{Leb}}

\def\PES{\operatorname{PES}}

\begin{document}

\title{The Bernoulli property of Sinai-Ruelle-Bowen measures for flows}

\author{Chiyi Luo and Dawei Yang\footnote{
D. Yang  was partially supported by National Key R\&D Program of China (2022YFA1005801),  NSFC 12325106, NSFC12526207, ZXL2024386 and Jiangsu Specially Appointed Professorship.
C. Luo is partially supported by NSFC 12501244 and Early-Career Young Scientists and Technologists Project of Jiangxi Province.
}}

\date{}
\maketitle

\begin{abstract} 
We prove that for $C^{1+\beta}$ flows whose generating vector fields may have singularities, every weakly mixing hyperbolic SRB measure is Bernoullian. 
\end{abstract}

\tableofcontents

\section{Introduction}\label{SEC:1}
The Bernoulli property has attracted considerable attention. 
In the setting of smooth dynamical systems, the Bernoulli property for linear hyperbolic toral automorphisms was established by Ornstein and Weiss~\cite{OrW73}. 
For general equilibrium states of uniformly hyperbolic diffeomorphisms, the Bernoulli property was proved by Bowen~\cite{Bow74}.
For uniformly hyperbolic flows, it was first obtained for geodesic flows of constant negative curvature~\cite{OrW73}, and later extended to general Anosov flows by Ratner~\cite{Rat74}.
We also note that Anosov~\cite{Ano67} made a fundamental contribution by establishing the K-property.

Much research interest has shifted to the theory of non-uniform hyperbolicity after the work of Pesin \cite{Pes76,Pes77,Pes77G}. 
In \cite{Pes77}, Pesin proved that some ergodic component of the Liouville measure is Bernoullian for geodesic flow with non-positive and non-identically zero curvature.
This result can be adapted to volume measures associated with non-uniformly hyperbolic systems.
Furthermore, for $C^{1+}$ diffeomorphisms, this theory was extended by Ledrappier~\cite{Led84} to hyperbolic weakly mixing measures whose conditional measures on unstable manifolds are absolutely continuous with respect to the corresponding Lebesgue measures.
The Bernoulli property also has been extended to several equilibrium states for geodesic flows on rank-1 manifolds; see, for instance, \cite{BD22,BCFT18}.

For general equilibrium states of Hölder continuous potentials or geometric potentials, Sarig~\cite{Sar11,Sar13} developed a framework, often referred to as a ``coding'', to establish the Bernoulli property for diffeomorphisms.
In higher dimensions, when the equilibrium state is hyperbolic, the coding constructed by Ben Ovadia~\cite{BOv18} is expected to be useful under suitable conditions.
Sarig's framework can also be applied to flows generated by vector fields under the additional assumption that the vector field has no singularities, or equivalently, that the flow has no fixed points; see, for instance, \cite{LLS16,LiS19}.
In the higher-dimensional setting for no singular flows, the coding approach developed in \cite{LMN25} is expected to be helpful.
The

The aim of this paper is to study general flows generated by vector fields, without imposing the assumption that the vector field has no singularities, or equivalently, that the flow has positive speed.
For general equilibrium states, establishing the Bernoulli property remains out of reach at present, due to the lack of an appropriate coding.
In this paper, we restrict our attention to SRB measures, which are equilibrium states for geometric potentials.
For diffeomorphisms, this is one of the main results of Ledrappier~\cite{Led84}.
For flows without singularities, it can be obtained in~\cite{ALP24,LLS16,LiS19} by using coding.
The case of general flows has not been previously considered\footnote{
The work of Ornstein-Weiss~\cite{OrW98} appears to be closely related to the main result of this paper.
However, their approach requires the existence of a suitable cross-section and a regular roof function.
Chernov-Haskell~\cite{ChH96} studied non-uniformly hyperbolic systems, but the notion of singularities in their setting differs from ours, and their three conditions on singularities may not be satisfied in the present context.}.

Assume that $M$ is a compact boundaryless Riemannian manifold and that $\{T^t\}_{t\in\mathbb R}$ is a flow generated by a $C^{1+\beta}$ vector field\footnote{In this paper, we will rarely refer explicitly to the vector field, and hence we do not introduce separate notation for it.}.
In this case, the flow $\{T^t\}_{t\in\mathbb R}$ is of class $C^{1+\beta}$.
When there is no ambiguity, we abbreviate the flow by $T^t$.
The singularities of the vector field correspond to the fixed points of the flow $T^t$.
Denote
$${\rm Fix}(T^t)=\{p\in M:~T^t(p)=p,~\forall t\in\mathbb R\}.$$
An invariant measure $\mu$ is said to be \emph{regular} if $\mu({\rm Fix}(T^t))=0$.

Flows with fixed points (or, equivalently, vector fields with singularities) are of particular significance. 
The famous Lorenz attractor \cite{Lor63} is one of these examples. 
From a purely mathematical point of view, there are “geometric Lorenz attractors” constructed in~\cite{ABS77,Guc76,GuW79}.
To describe Lorenz-like dynamics in a broader setting, \cite{MPP04} introduced the notion of ``singular hyperbolicity''. 
We refer to~\cite{APPV09,CYZ20,LeY17} for results on the existence and uniqueness of SRB measures for singular hyperbolic attractors.
We also note that deeper ergodic properties of SRB measures for general singular hyperbolic attractors remain an interesting direction for further study.

From Oseledec~\cite{Ose68}, for an invariant measure $\mu$, $\mu$-almost every point has $\dim M$ well-defined Lyapunov exponents.
An invariant regular measure is said to be \emph{hyperbolic} if it has exactly one zero Lyapunov exponent.
Note that for ergodic measures supported on ${\rm Fix}(T^t)$, it may happen that all Lyapunov exponents are nonzero.
For more discussion on the hyperbolicity of measures, one can see Section~\ref{Sec:Pesin}.

SRB measures are invariant measures with nice geometric properties.
An invariant measure $\mu$ of a $C^{1+\beta}$ flow $T^t$ is said to be a Sinai-Ruelle-Bowen measure (SRB measure for short) if it has positive Lyapunov exponents, and its conditional measures on the unstable manifolds are absolutely continuous with respect to the Lebesgue measures on these submanifolds.
SRB measures have positive metric entropy by~\cite{LeY85}.
Thus, ergodic SRB measures must be regular.
Ergodic SRB measures need not be Bernoullian, since the flow may be the suspension of a hyperbolic diffeomorphism with constant roof function.
We need some additional weak property to exclude this case.
From \cite[Chapter 3.4]{FiH19}, an invariant measure $\mu$ is said to be \emph{weakly mixing} if for any measurable set $A,B$, one has that
$$\lim_{t\to \infty}\frac{1}{t} \int_0^t |\mu(A \cap T^{-s}(B)) - \mu(A)\mu(B)| \, {\rm d}s = 0.$$

We first recall the definition of Bernoulli property. 
An automorphism $(X,\mathcal B,\mu,T)$ is said to be Bernoullian if it is isomorphic to a Bernoulli scheme\footnote{One can see the definition of a Bernoulli scheme in~\cite[Section 3.3]{Sar11}. The definition in~\cite{Sar11} differs from that in~\cite[Page 105]{Wal82}. However, by Ornstein's theorem~\cite{Orn74}, metric entropy is a complete invariant of Bernoulli systems, and hence the two definitions are equivalent.}.

\begin{Definition}
Let $(X,\mathcal B,\mu)$ be a probability space.
A flow $(X,\mathcal B,\mu,{T^t})$ is said to be \emph{Bernoullian} if the time-one map $T=T^1$ defines a Bernoulli automorphism, that is, if $(X,\mathcal B,\mu,T)$ is Bernoullian.
\end{Definition}
The Bernoulli property is a strong ergodic property; it implies the K-property, which in turn implies mixing.
Our main result shows that weakly mixing hyperbolic SRB measures for flows (with singularities) are Bernoullian.

\begin{theoremalph}\label{Thm:main}
Assume that $\mu$ is a hyperbolic SRB measure for a $C^{1+\beta}$ flow $\{T^t\}_{t\in\mathbb R}$.
If $\mu$ is weakly mixing, then $\mu$ is Bernoullian.
\end{theoremalph}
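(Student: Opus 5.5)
We would follow the classical Pesin--Ledrappier scheme and prove that the time-one map $T=T^1$ is \emph{very weakly Bernoulli} (VWB) with respect to a suitable generating partition; Ornstein's theory then gives Bernoullicity. The first step is to get the K-property for the flow. Because $\mu$ is a hyperbolic SRB measure, Pesin theory for flows (Section~\ref{Sec:Pesin}, with Pesin blocks kept away from ${\rm Fix}(T^t)$, which has measure zero since $\mu$ is regular) lets us run the Ledrappier--Young machinery. We would build measurable partitions subordinate to the strong unstable foliation $W^{uu}$ and to the center-unstable foliation $W^{cu}=\bigcup_t T^tW^{uu}$. The Pinsker $\sigma$-algebra of $T^1$ is measurable with respect to both the $W^{ss}$- and $W^{uu}$-partitions. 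A Hopf argument then shows its atoms are unions of whole orbits saturated by $W^{ss}$ and $W^{uu}$. Absolute continuity of both foliations is needed here, and it holds by the $C^{1+\beta}$ assumption. Such sets are invariant under the flow, so they can only carry a nontrivial eigenfunction structure. Weak mixing rules this out and makes the Pinsker algebra trivial, so $(\mu,T^1)$ is K. This is where weak mixing is used: it excludes the constant-roof suspension obstruction.

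The second step is to prove the VWB property by Ornstein--Weiss / Ledrappier coupling. We choose a finite partition $\mathcal P$ with small boundaries, meaning $\mu(B_\varepsilon(\partial\mathcal P))\le C\varepsilon^a$, which is possible for any Borel measure. We need that $\bigvee_{n}T^{-n}\mathcal P$ generates and that, for most atoms $A$ of $\bigvee_{n=0}^{N}T^{n}\mathcal P$ (the past), the conditional measure $\mu_A$ is $\bar d$-close to $\mu$ on future $\mathcal P$-names. To get this we fix a Pesin block $\Lambda$ of large measure, at positive distance from ${\rm Fix}(T^t)$. On $\Lambda$ we use local product boxes made of $W^{uu}$-plaques, short flow segments and $W^{ss}$-plaques. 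Absolute continuity of conditional measures on $W^{uu}$, together with the K-property (mixing of unstable plaques), lets us match a conditional measure on a piece of unstable leaf inside $A$ with $\mu$. The matching goes through the stable holonomy composed with a small time shift along the flow direction. Matched points have future orbits that approach exponentially along $W^{ss}$ but may differ by a bounded time shift $\tau$.

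The main obstacle is this flow direction. Under $T^1$, the points $x$ and $T^\tau y$ stay close, but $x$ and $y$ do not, so the $\bar d$ matching must absorb $\tau$. We would handle it as Ratner and Ornstein--Weiss do for flows. The holonomy is a composition of $W^{ss}$-holonomy and a flow time shift, and the time shift is small, of size controlled by the box size $\delta$. Hence $T^n x$ and $T^n y$ are within $O(\delta)$ for all $n\ge0$. They then have different $\mathcal P$-names only when the orbit enters $B_{C\delta}(\partial\mathcal P)$. By Birkhoff's theorem that frequency is $\le C\delta^a$, which is small, so the $\bar d$ bound holds.

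The genuinely new difficulty is the singularities. Near ${\rm Fix}(T^t)$ the flow speed degenerates, product boxes shrink, and the time shifts are not uniformly controlled. We would deal with this by doing all constructions only at returns to $\Lambda$, using the Pesin-block estimates uniformly there. The exceptional orbits, those with low return frequency to $\Lambda$, are then absorbed into the $\varepsilon$-error of the VWB definition. With K plus VWB on a generating partition, Ornstein's theorem shows that $T^1$ is Bernoulli.
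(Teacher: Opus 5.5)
\textbf{The gap is in your K-step.} You claim that a Hopf argument makes the atoms of the Pinsker algebra of $T^1$ ``unions of whole orbits''. This is false. $E^{ss}\oplus E^{uu}$ has codimension one, so saturation by $W^{ss}$ and $W^{uu}$ gives no control in the flow direction; a Hopf argument would need $W^{cs}$- or $W^{cu}$-saturation, which Pinsker sets of $T^1$ do not have.

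A concrete check: if Pinsker sets were flow-invariant, ergodicity alone would give the K-property. That fails for the constant-roof suspension of an Anosov diffeomorphism. There the sets $\{(x,s): s\in I\}$ are $W^{ss}$- and $W^{uu}$-saturated and lie in the Pinsker algebra of $T^1$ (they are measurable with respect to the eigenfunction $e^{2\pi i s/r}$), yet they are not flow-invariant.

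Your next sentence (``they can only carry a nontrivial eigenfunction structure; weak mixing rules this out'') therefore asserts the whole content of Theorem~\ref{Thm:weakly-to-K} without proof. Weak mixing only excludes eigenfunctions, and a zero-entropy factor can itself be weakly mixing. You would have to show that a nontrivial Pinsker factor of a hyperbolic SRB flow produces a nonconstant eigenfunction, and nothing in your sketch does this.

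\textbf{How the paper closes this step.} It uses weak mixing directly and geometrically, with no spectral analysis, to prove $W^s$-metric transitivity. That gives K through the tail of a partition subordinate to $W^s$ (Theorem~\ref{Thm:metric-transitive-K}).
\begin{itemize}
\item From an $s$-saturated set of measure in $(0,1)$, Proposition~\ref{Pro:s-saturated} produces an $s$-saturated $B$ and $\tau>0$ such that $C=T^{[-\tau,\tau]}(B)$ has measure in $(0,1)$.
\item Weak mixing gives times $t_n\to\infty$ at which $T^{t_n}(B)$ meets every $u$-rectangle of a cover of a Pesin block with $\mu({\rm PES}_\ell)>\mu(C)$, in measure bounded below.
\item Proposition~\ref{Pro:exceptional-set} makes $T^{t_n}(B)$ almost $u$-saturated. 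By the SRB property, some unstable plaque in each rectangle then lies almost entirely in $T^{t_n}(B)$.
\item Since $T^{t_n}(B)$ is $s$-saturated, the $cs$-holonomy spreads this to every plaque of the rectangle; its flow-time shift is absorbed by the $\tau$-thickening. This gives $\mu(C)\ge\mu({\rm PES}_\ell)$, a contradiction.
\end{itemize}
You already note that Pinsker sets are $W^{ss}$-measurable, so inserting this argument would repair your first step.

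\textbf{Your VWB step is essentially sound.} It is the paper's argument run in the opposite time direction: the paper couples atoms of $\bigvee T^{-n}\mathcal P$, which are nearly unions of local stable plaques, along $W^{cu}$, and proves VWB for $T^{-1}$. Your handling of the small time shift matches Lemma~\ref{Lem:d-bar-prepare}. One correction: a bound uniform in $n$ on the bad set comes from invariance plus Markov's inequality, not from Birkhoff's theorem.
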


For the proof of Theorem~\ref{Thm:main}, in contrast to~\cite{Led84}, we must take into account the specific features of the flow.
See, for instance, Section~\ref{Sec:to-prove-K} for details.

To prove that an ergodic measure is Bernoullian, there is a classical approach due to Ornstein and Weiss~\cite{OrW73}: one first establishes the K-property and then constructs a sequence of very weak Bernoulli partitions generating the $\sigma$-algebra.
However, different settings require different strategies to implement these steps.
In the present setting of hyperbolic SRB measures for singular flows, no coding is available as in~\cite{LiS19}.
Thus, we do not have a local product structure arising from symbolic dynamics.
Moreover, it is not clear how to adapt the method of Ledrappier~\cite{Led84} to flows with fixed points.
Even in the absence of fixed points, such as for geodesic flows, additional arguments are typically required; see Anosov~\cite{Ano67} and Section~\ref{Sec:to-prove-K}, for instance.

We now recall the K-property for measure-preserving systems.
Note that the K-property implies K-mixing, which in turn implies mixing.  
We refer to~\cite[Chapter 10.2]{CFS82} for a detailed description.
\begin{Definition}[Tail $\sigma$-algebra and K-property] \label{Def:Kp}
Let $(X,\mathcal B,\mu,T)$ be a measure-preserving system, and let $\mathcal A\subset \mathcal B$ be a $\sigma$-sub-algebra satisfying $T^{-1}\mathcal A\subset \mathcal A$. 
The tail $\sigma$-algebra of $\mathcal A$ is defined by
$${\rm Tail}(\mathcal A)=\bigcap_{n\ge 0}T^{-n}\mathcal A.$$
The system $(X,\mathcal B,\mu,T)$ is said to have the K-property if there exists a $\sigma$-subalgebra $\mathcal A\subset \mathcal B$ with $T^{-1}\mathcal A\subset \mathcal A$ such that
\begin{itemize}
\item ${\rm Tail}(\mathcal A)=\{\emptyset,X\}$ modulo $\mu$;
\item $\bigvee_{n=0}^\infty T^n \mathcal A=\mathcal B$ modulo $\mu$.
\end{itemize}  
\end{Definition}
Note that, in the proof, we will consider $\mathcal A$ to be generated by partitions subordinated to stable manifolds.
Theorem~\ref{Thm:main} can be reduced to the following two theorems.
\begin{Theorem}\label{Thm:weakly-to-K}
Assume that $\mu$ is a hyperbolic SRB measure of a flow $\{T^t\}_{t\in\mathbb R}$. If $\mu$ is weakly mixing, then $\mu$ has the K-property.
\end{Theorem}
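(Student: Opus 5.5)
The plan is to prove that the time-one map $T=T^1$ has the K-property by identifying its Pinsker $\sigma$-algebra with the $\sigma$-algebra of flow-invariant sets, in the spirit of Ledrappier~\cite{Led84}. Weak mixing then excludes nontrivial eigenfunctions, which forces this $\sigma$-algebra to be trivial.

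\textbf{Step 1: increasing partitions.} Since $\mu$ is a hyperbolic SRB measure and regular, Pesin theory (Section~\ref{Sec:Pesin}) gives for $\mu$-a.e.\ point local strong stable manifolds $W^s_{\loc}(x)$, together with local weak stable manifolds $W^{cs}_{\loc}(x)$ obtained by adding the flow direction. Following Ledrappier--Young~\cite{LeY85}, I construct a measurable partition $\xi^s$ subordinate to the strong stable manifolds of $T$ with $T\xi^s\ge \xi^s$ and $\bigvee_{n\ge0}T^{-n}\xi^s=\varepsilon$. I take $\mathcal A$ to be the $\sigma$-algebra generated by $\xi^s$, so that $T^{-1}\mathcal A\subset\mathcal A$. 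For $\mu$-a.e.\ $x$, the conditional entropy $H_\mu(\xi^s\mid T\xi^s)$ equals $h_\mu(T)>0$, since the SRB property is equivalent to the Pesin entropy formula. I then apply the generator conditions with $T^{-1}$ in place of $T$ and $\xi^s$ stable, or dually use the unstable partition $\xi^u$ and stable saturation; either works by symmetry of the argument. The standard Rokhlin--Sinai theory gives $\mathrm{Tail}(\mathcal A)=\pi(T)$ mod $\mu$, where $\pi(T)$ is the Pinsker algebra, and the second bullet of Definition~\ref{Def:Kp} follows from the generating property. So it suffices to show that $\pi(T)$ is trivial.

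\textbf{Step 2: Pinsker sets are saturated.} Let $\xi^u$ be an analogous partition subordinate to unstable manifolds. By Ledrappier--Young, $\pi(T)\subset\bigcap_n T^n\xi^u$ and $\pi(T)\subset\bigcap_n T^{-n}\xi^s$ mod $\mu$. Hence every $\pi(T)$-measurable set $P$ is, up to a null set, a union of global strong stable manifolds and also a union of global unstable manifolds. Moreover $\pi(T)$ is invariant under every $T^t$, because the Pinsker algebras of $T^t$ and $T$ coincide and $T^t$ commutes with $T$. Therefore $P$ is also saturated by weak stable and weak unstable manifolds $W^{cs},W^{cu}$.

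\textbf{Step 3: Hopf argument and the main obstacle.} Next I run a Hopf-type argument using absolute continuity of the stable foliation and the SRB property, which gives absolutely continuous conditionals on unstable leaves. Locally, $W^{cs}$ and $W^u$ have complementary dimensions, so the local product structure on Pesin blocks away from ${\rm Fix}(T^t)$ shows that $P$ coincides mod $\mu$ with a union of ergodic components of $(T^t)$. Since $\mu$ is ergodic for the flow, $P$ is trivial. The subtle point is that $P$ must be $T$-invariant and saturated only by strong stable manifolds. A priori the $T$-Pinsker algebra could contain sets like $\{x: e^{2\pi i\theta(x)}\in I\}$ coming from an eigenfunction of the flow, which are invariant under $T$ but not under $T^t$.

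The main obstacle is therefore to show directly that $\pi(T)$ is $T^t$-invariant and contained in the flow-invariant $\sigma$-algebra up to eigenfunctions. I would first try this: since $T^t$ commutes with $T$, $T^t\pi(T)=\pi(T)$, so the flow acts on the factor $\pi(T)$ with zero entropy. On that factor, the local product structure (via $s$-, $u$-saturation and accessibility along flow lines within a Pesin block) shows that the factor flow is measurably a rotation, i.e.\ generated by eigenfunctions of $\{T^t\}$. Weak mixing of $\mu$ excludes nonconstant eigenfunctions, so $\pi(T)$ is trivial and $T$ has the K-property. Singularities only enter through the need to stay in Pesin blocks of positive measure where the flow speed is bounded below, and regularity of $\mu$ makes this possible.
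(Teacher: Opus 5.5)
\textbf{There is a genuine gap.} Your Step 1 reduction is sound. The K-property of $T^1$ is equivalent to triviality of $\pi(T^1)$. By Rokhlin--Sinai and Ledrappier--Young, this Pinsker algebra coincides mod $\mu$ with the tail of a partition subordinate to $W^s$, i.e.\ with the $\sigma$-algebra of $s$-saturated sets. This is the same object the paper uses in Theorem~\ref{Thm:metric-transitive-K}. One small correction: $H_\mu(\xi^s\mid T\xi^s)=h_\mu(T)$ comes from the Ledrappier--Young fact that entropy is carried by stable/unstable leaves for every invariant measure, not from the SRB property.

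The argument breaks at the end of Step 2. The identity $T^t\pi(T)=\pi(T)$ says that the $\sigma$-algebra is invariant, i.e.\ $T^tP\in\pi(T)$ whenever $P\in\pi(T)$. It does not give $T^tP=P$, so you cannot conclude that $P$ is $W^{cs}$- or $W^{cu}$-saturated. The suspension of a mixing Anosov diffeomorphism with constant roof $1$ shows the inference is false. There the set $\{(x,s):0\le s<1/2\}$ is $T^1$-invariant, hence lies in $\pi(T^1)$, and is saturated by strong stable and strong unstable leaves, but it is not $W^{cs}$-saturated. If your inference held, the Hopf argument plus ergodicity of the flow would give the K-property for every ergodic hyperbolic SRB measure, without weak mixing.

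You recognize this in Step 3, but the replacement is not an argument; it is a restatement of the theorem. You assert that on the Pinsker factor ``the local product structure (via $s$-, $u$-saturation and accessibility along flow lines within a Pesin block) shows that the factor flow is measurably a rotation''. Since $\dim E^s+\dim E^u=\dim M-1$, saturation by $W^s$ and $W^u$ leaves the flow direction unconstrained on a product box. Nothing in your outline explains why the resulting sets must be generated by eigenfunctions, or how weak mixing enters other than in the final sentence.

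This is exactly where the paper does its work: it proves $W^s$-metric transitivity directly.
\begin{itemize}
\item Starting from an $s$-saturated $A$ with $0<\mu(A)<1$, Proposition~\ref{Pro:s-saturated} (a density-point argument along orbits) produces an $s$-saturated $B$ and $\tau>0$ such that $C=T^{[-\tau,\tau]}(B)$ still has measure in $(0,1)$.
\item Weak mixing, in the density-zero form of Corollary~\ref{Cor:manysets-intersect}, gives times $t_n\to\infty$ with $\mu(T^{t_n}B\cap\Pi_i^u)$ bounded below simultaneously for all $u$-rectangles.
\item Proposition~\ref{Pro:exceptional-set} and the SRB densities then give, in each rectangle, a $u$-plaque lying almost entirely in $T^{t_n}B$.
\item The absolutely continuous $cs$-holonomy moves points along $W^s$ and by flow time less than $\tau$. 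It therefore spreads this plaque over the whole rectangle inside $T^{t_n}C$, forcing $\mu(C)\ge\mu(\PES_\ell)>\mu(C)$.
\end{itemize}
So weak mixing is used to synchronize one time across all rectangles, and the $\tau$-thickening absorbs the flow-direction displacement. Some such mechanism is what your Step 3 lacks. Your Pinsker reduction could be completed this way, since elements of $\pi(T^1)$ are $s$-saturated, but the Pinsker detour then becomes unnecessary.
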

\begin{Theorem}\label{Thm:K-to-B}
Assume that $\mu$ is a hyperbolic SRB measure of a flow $\{T^t\}_{t\in\mathbb R}$. If $\mu$  has the K-property, then $\mu$ is Bernoullian.
\end{Theorem}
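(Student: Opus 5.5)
The plan follows the Ornstein--Weiss scheme, with Ledrappier's diffeomorphism argument adapted to the time-one map $T=T^1$. By Ornstein's theory, $T$ is Bernoullian as soon as there is an increasing sequence of finite partitions $\mathcal P_k$ with $\bigvee_k \mathcal P_k=\mathcal B$ modulo $\mu$, each of which is \emph{very weak Bernoulli} (VWB) for $T$. The work therefore splits into choosing suitable partitions and verifying the VWB property for each.

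\textbf{Choice of partitions.} Since $\mu$ is regular, $\mu$-a.e.\ point is away from ${\rm Fix}(T^t)$. The hyperbolicity of $\mu$ makes $T$ nonuniformly partially hyperbolic with one-dimensional center along the flow direction. Using Pesin theory for the flow (Section~\ref{Sec:Pesin}), I would fix a Pesin block $\Lambda_\ell$ of large measure, on which the following hold uniformly:
\begin{itemize}
\item the local stable and unstable manifolds $W^{s}_{\loc}$, $W^u_{\loc}$ of $T^1$ have uniform size;
\item the flow speed is bounded below;
\item the local weak-stable and weak-unstable manifolds $W^{cs}_{\loc}$, $W^{cu}_{\loc}$ exist and enjoy a local product structure.
\end{itemize}
I would take partitions into sets with boundaries of small $\mu$-neighbourhoods, $\mu(B_\varepsilon(\partial\mathcal P))\le C\varepsilon^{a}$; these can be built from balls with generic radii. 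Refining such partitions gives $\bigvee\mathcal P_k=\mathcal B$.

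\textbf{VWB via the K-property and absolute continuity.} Fix such a $\mathcal P$ and $\varepsilon>0$. The steps are as follows.
\begin{enumerate}
\item Use the K-property to pick $N$ so that, for most atoms $A$ of $\bigvee_{-N}^{0}T^{i}\mathcal P$ (or of a partition $\xi^s$ subordinate to stable manifolds, as in the generator $\mathcal A$), the conditional distribution of the future $\mathcal P$-names is close to the unconditional distribution. This is where the K-property enters: the tail of the stable-subordinate algebra is trivial, so its past and future become asymptotically independent.
\item Inside a good atom, work in local weak-unstable leaves. Because $\mu$ is SRB, its conditionals on $W^u$ are absolutely continuous. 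Using the product structure and the flow direction, the conditionals on $W^{cu}$ are absolutely continuous as well.
\item Construct a measure-preserving (up to $\varepsilon$) matching $\theta$ between two $cu$-plaques $W_1,W_2$ in the atom. The matching slides along $W^{s}$-holonomy, so that $T^n\theta(x)$ and $T^nx$ converge exponentially.
\item Conclude that the $\mathcal P$-names of $x$ and $\theta x$ agree at most times $n\ge 0$, except for a set of small density of times at which $T^nx$ lies near $\partial\mathcal P$. The small-boundary estimate together with Borel--Cantelli controls these times.
\item The absolute continuity of stable holonomy (Pesin) makes $\theta$ nearly measure-preserving. This gives $\bar d$-closeness of the name distributions, hence VWB.
\end{enumerate}

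\textbf{Main obstacle.} The hardest step will be the matching in the center (flow) direction. Stable holonomy moves $W^{cu}$-plaques, but the plaques in different atoms carry unrelated time offsets. A pure $W^s$-matching therefore leaves a fixed time shift, which does not contract. My first attempt would be the Ornstein--Weiss trick for flows: match along $W^{cs}=\bigcup_t T^tW^s$, accepting a bounded time shift $\tau(x)$, and then show that $\tau$ is small in measure. Smallness of $\tau$ should follow from the K-property applied to the weak-stable partition: K gives triviality of the tail, which forbids a nontrivial time-change cocycle. Taking the plaques thin in the flow direction, of length $\delta$, then makes the name mismatch caused by the shift at most $O(\delta)$ in density.

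Near ${\rm Fix}(T^t)$ the flow speed degenerates. I would handle this by the Pesin-block restriction: the times $n$ with $T^nx\notin\Lambda_\ell$ have density at most $\mu(\Lambda_\ell^c)$ by the ergodic theorem, so they contribute only a small error to $\bar d$.
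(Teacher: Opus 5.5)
Your skeleton is the paper's argument with time reversed. It is the Ornstein--Weiss scheme: small-boundary partitions, K-mixing, atoms saturated by local leaves, a nearly measure-preserving holonomy matching built from SRB densities, and name disagreement only near $\partial\mathcal P$.

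\textbf{Comparison with the paper.} The paper proves (Theorem~\ref{Thm:partition-VWB}) that each partition satisfying \eqref{e.super-regular} is VWB for $T^{-1}$:
\begin{itemize}
\item up to $\varepsilon$ of their mass, atoms of $\bigvee_{n=N}^{N'}T^{-n}\mathcal P$ are unions of \emph{strong}-stable plaques inside small $s$-rectangles (Proposition~\ref{Pro:s-leaf-wise});
\item K-mixing is used only to obtain $\mu_P(\Pi_i^s)\approx\mu(\Pi_i^s)$ for the finitely many rectangles;
\item the matching moves points inside $W^{cu}_{\loc}$ via $h^s\circ\theta_w\circ h^s$ (Proposition~\ref{Pro:local-map}).
\end{itemize}
Your mirror version also works. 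It uses $u$-saturated atoms, $cs$-holonomy between $u$-plaques with $\Jac(h^{cs})\approx 1$, SRB densities on $u$-plaques, and an abstract isomorphism of the quotient measures on the $cs$-transversal.

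\textbf{The ``main obstacle'' is misdiagnosed.} There is no argument by which a trivial tail ``forbids a time-change cocycle'' and makes $\tau$ small in measure, and none is needed. Inside a rectangle of diameter $\alpha$ in $\PES_L$, the local product structure gives $|\tau|\le C_L\alpha$ uniformly. Since $T^jT^{\tau}=T^{\tau}T^j$ and the vector field is bounded, matched points stay $\delta=O(\alpha)$-close for \emph{all} $j\ge 0$, even though the shift never contracts. A name mismatch at time $j$ then forces $T^jx\in B(\partial\mathcal P,\delta)$. Markov's inequality and invariance give, for every $n$,
$\mu\{x:\#\{0\le j<n: T^jx\in B(\partial\mathcal P,\delta)\}>\varepsilon n\}\le \mu(B(\partial\mathcal P,\delta))/\varepsilon$.
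This is exactly Lemma~\ref{Lem:d-bar-prepare}. Your final sentence about thin plaques is therefore the whole resolution, and the K-property plays no role in it.

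\textbf{Misplaced steps and tools.}
\begin{itemize}
\item \emph{Borel--Cantelli.} The summability \eqref{e.super-regular} and Borel--Cantelli belong to the saturation lemma for atoms, where the neighbourhoods $B(\partial\mathcal P,e^{-\lambda n})$ shrink. They cannot control the matching errors, which involve neighbourhoods of fixed radius $\delta$.
\item \emph{Uniformity in $n$.} Neither Borel--Cantelli nor the ergodic theorem (used in your fixed-point paragraph) gives bounds uniform in the name length $n$, which VWB requires. Use Markov as above.
\item \emph{Step 3 as written.} Atoms of long blocks are not unions of $cu$-plaques: a flow segment of fixed length through a typical point is cut by the pulled-back boundaries infinitely often. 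So two $cu$-plaques in one atom matched by pure $W^s$-holonomy with exponential convergence cannot be realized. Only strong-leaf saturation survives, which is why the matching must absorb a bounded flow shift.
\item \emph{Direction of the matching.} The matching must go from $(P,\mu_P)$ to $(M,\mu)$, rectangle by rectangle, not between plaques of the same atom.
\item \emph{Step 1.} K-mixing cannot make the conditional law of the $n$-names close uniformly in $n$; that would be VWB itself. Its only job is balancing the masses of the fixed finite family of rectangles.
\item \emph{Fixed points.} These need no special treatment. The name comparison uses only an upper bound on the flow speed, and the lower bound enters only through the local product structure on the Pesin block.
\end{itemize}
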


\section{Pesin theory}\label{Sec:Pesin}
In this section, we recall Pesin’s non-uniform hyperbolic theory. 
For a hyperbolic measure, Pesin blocks can be defined via the time-one map, and the Bernoulli property of the flow follows from that of the time-one map.
The proof of the Bernoulli property mainly relies on the properties of the local stable and unstable foliations.
Therefore, singularities of the flow do not give rise to significant difficulties in this local setting.

\subsection{Pesin Blocks}
Let $\{T^t\}_{t\in\mathbb R}$ be a $C^{1+\beta}$ flow.
An invariant measure $\mu$ is called \textit{regular}, if $\mu$ gives zero measure to the set of singularities. 
Let $\mu$ be an ergodic regular hyperbolic measure of the flow $\{T^t\}_{t\in\mathbb R}$. 
Then, there exist a full $\mu$-measure set $\Lambda_{\mu}$ and a $\{DT^t\}_{t\in \RR}$-invariant measurable splitting $T_{\Lambda_{\mu}}M=E^u\oplus E^c\oplus E^s$ such that for every $x\in \Lambda_{\mu}$,
\begin{itemize}
\item $E^u(x)$ is the subspace corresponding to positive Lyapunov exponents;
\item $E^c(x)$ is the subspace generated by the flow direction (in particular, $\dim E^c=1$);
\item $E^s(x)$ is the subspace corresponding to negative Lyapunov exponents.
\end{itemize}

Given $0<\varepsilon_0\ll\chi$ and $\ell\in\mathbb N$, define the Pesin block ${\rm PES}_{\ell}^{\chi,\varepsilon_0}(T^t)$ as follows: $x\in {\rm PES}_{\ell}^{\chi,\varepsilon_0}(T^t)$ if for every $n\in\mathbb N$ and $k\in\mathbb Z$
\begin{itemize}
	\item $\|D_{T^k(x)}T^{-n}|_{E^u(T^k(x))}\|\leq \ell {\rm e}^{|k|\varepsilon_0}{\rm e}^{-(\chi-\varepsilon_0)n}, \ \|D_{T^k(x)}T^{n}|_{E^s(T^k(x))}\|\leq \ell {\rm e}^{|k|\varepsilon_0} {\rm e}^{-(\chi-\varepsilon_0)n}$;
	\item ${\rm m}(D_{T^k(x)}T^{n}|_{E^u(T^k(x))})\geq  \ell^{-1}{\rm e}^{-|k|\varepsilon_0}{\rm e}^{(\chi-\varepsilon_0)n}, \ {\rm m}(D_{T^k(x)}T^{-n}|_{E^s(T^k(x))})\geq \ell^{-1}{\rm e}^{-|k|\varepsilon_0}{\rm e}^{(\chi-\varepsilon_0)n}$;
	\item $\angle (E^{\tau_1} (T^k(x)), E^{\tau_2} (T^k(x)))\geq {\ell}^{-1}{\rm e}^{-k\varepsilon_0},~\tau_1\neq \tau_2\in \{u,c,s\}$;
	\item $\ell^{-1}{\rm e}^{-(|k|+n)\varepsilon_0}\le  \|D_{T^k(x)}T^{n}|_{E^c(T^k(x))}\|\leq \ell {\rm e}^{(|k|+n)\varepsilon_0}$;
\end{itemize}
where ${\rm m}(A):=\inf\{\|Av\|:\|v\|=1\}$ is the co-norm for linear maps.
If there is no confusion, we denote ${\rm PES}_{\ell}^{\chi,\varepsilon_0}(T^t):={\rm PES}_{\ell}^{\chi,\varepsilon_0}$.

Fix $\chi>0$. 
An ergodic hyperbolic measure $\mu$ is said to be \emph{$\chi$-hyperbolic} if all Lyapunov exponents in $E^u$ are larger than $\chi$ and all Lyapunov exponents in $E^s$ are small than $-\chi$.
It is clear that if $\mu$ is an ergodic regular hyperbolic measure, then there exists $\chi>0$ such that for any $0<\varepsilon_0\ll \chi$, one has $\mu(\bigcup_{\ell>0}{\rm PES}_{\ell}^{\chi,\varepsilon_0})=1$.

\subsection{Local unstable manifolds}

From Pesin theory, every point $x\in {\rm PES}_{\ell}^{\chi,\varepsilon_0}$ has a local unstable manifold $W^u_{\rm loc}(x)$. 
For instance, can see \cite[Section 2]{LuY24} for the precise definition.

\begin{Theorem}[Pesin stable/usntable manifold theorem]\label{Thm:Pesin-manifold}
Given $\chi>0$, for $0<\varepsilon_0\ll\chi$ and $\ell\in\mathbb N$, there is $\delta=\delta(\chi,\varepsilon_0,\ell)>0$ such that every point $x\in {\rm PES}_{\ell}^{\chi,\varepsilon_0}$ has a local unstable manifold $W^u_\delta(x)$ in the form 
$$W^u_\delta(x)=\exp_{x}\big\{(v,\psi^u_x(v)): v\in E^u(x)~\text{with}~\|v\| \leq \delta \big\},$$
where $\psi^u_x: \{v\in E^{u}(x):\|v\|\leq  \delta\} \to E^{cs}(x)$ is a $C^{1+\beta}$ map satisfying $\psi^u_x(0)=0,~D_0(\psi^u_x)=0$ and ${\rm Lip}(\psi^u_x)\leq 1/10$.
Moreover, for every point $x\in {\rm PES}_{\ell}^{\chi,\varepsilon_0}$ and all points $y,z\in W^u_{\delta}(x)$, one has that
$$d(T^{-t}(y),T^{-t}(z))\le 2\ell {\rm e}^{-t(\chi-\varepsilon_0)}d(y,z),~~~\forall t\ge 0.$$
The same result holds for local stable manifolds.
And $W^{u}_{\rm loc}(x),W^{s}_{\rm loc}(x)$ depend continuously on $x\in \PES_{\ell}^{\chi,\varepsilon_0}$ in the $C^1$ topology.
\end{Theorem}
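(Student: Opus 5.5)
The idea is to reduce Theorem~\ref{Thm:Pesin-manifold} to the classical Pesin stable manifold theorem for sequences of $C^{1+\beta}$ diffeomorphisms with a dominated splitting in Lyapunov charts, applied to the time-one map $T=T^1$. The flow direction $E^c$ is absorbed into the center-stable bundle $E^{cs}=E^c\oplus E^s$.

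\textbf{Charts and local maps.} Fix $x\in {\rm PES}_{\ell}^{\chi,\varepsilon_0}$. Along the orbit $T^k(x)$, $k\in\mathbb Z$, I will construct Lyapunov (Pesin) charts. The Lyapunov norm is defined on $E^u\oplus E^{cs}$ by
$$\|v^u\|' = \sum_{n\ge0} e^{(\chi-2\varepsilon_0)n}\|DT^{-n}v^u\|,$$
with the analogous weighted sum on $E^{cs}$ using the bounds from the definition of the Pesin block. On $E^c$ the growth is at most $e^{(|k|+n)\varepsilon_0}$, far below $\chi$, so $E^{cs}$ is only weakly expanded. The block estimates, including the angle bounds, give that the chart distortion at $T^k(x)$ is at most $C\ell e^{|k|\varepsilon_0}$.

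In these charts, $F_k=\exp^{-1}_{T^{k+1}x}\circ T\circ\exp_{T^kx}$ is a $C^{1+\beta}$ map of size $r_k=\delta_0\ell^{-1}e^{-|k|\varepsilon_0}$. Its derivative at $0$ expands $E^u$ by at least $e^{\chi-2\varepsilon_0}$ and expands $E^{cs}$ by at most $e^{2\varepsilon_0}$. The $C^{1+\beta}$ smoothness of the flow makes the nonlinear part $\beta$-H\"older small on the ball of radius $r_k$, because the radii shrink only slowly.

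\textbf{Graph transform.} I will then run the Hadamard--Perron graph transform along the backward orbit. Start with flat graphs over $E^u(T^{-n}x)$, push them forward by $F_{-1}\circ\cdots\circ F_{-n}$, and restrict to the disc of radius $\delta$. The cone invariance holds because the gap is $e^{\chi-4\varepsilon_0}>1$. Contraction of the graph transform in the $C^0$ metric gives a limit graph $\psi^u_x$ with ${\rm Lip}\le 1/10$. Invariance of the tangent cone field, combined with the H\"older control, upgrades the limit to $C^{1+\beta}$ with $\psi^u_x(0)=0$ and $D_0\psi^u_x=0$.

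\textbf{Backward contraction.} For $y,z$ on the graph, backward iterates stay in the unstable cones. Hence in Lyapunov norms
$$d'(T^{-n}y,T^{-n}z)\le e^{-(\chi-2\varepsilon_0)n}d'(y,z).$$
Converting back to the Riemannian metric costs the factor $\ell e^{n\varepsilon_0}\cdot 1$ at time $0$. After adjusting $\varepsilon_0$, this gives $2\ell e^{-(\chi-\varepsilon_0)n}$. For non-integer $t$, write $t=n+s$ with $s\in[0,1)$ and use the bounded $C^1$ norm of $T^{-s}$, absorbing the constant by shrinking $\delta$. The stable case is symmetric, using $T^{-1}$ and $E^{cu}$.

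\textbf{Continuity in $x$.} On the compact closure of the block, $E^u$, $E^{cs}$ and the charts vary continuously, since the block conditions are closed. The graph transform limit is uniform in $x$, so $W^u_\delta(x)$ depends continuously on $x$ in the $C^1$ topology.

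\textbf{Main obstacle.} The delicate point is the center direction near singularities. $E^c$ has zero exponent, and $|X(T^kx)|$ may become tiny, so $E^{cs}$ carries only tempered control rather than contraction. I would handle this by the fourth block condition, which tempers $DT^n|_{E^c}$, together with the angle condition, which keeps the splitting non-degenerate. This gives the domination needed for the unstable graph transform without ever requiring contraction along $E^{cs}$. With these two conditions in place, the singularities enter only through the constant $\ell$.
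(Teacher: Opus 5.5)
\textbf{The gap.} The step that does not go through is the quantitative estimate $d(T^{-t}(y),T^{-t}(z))\le 2\ell e^{-t(\chi-\varepsilon_0)}d(y,z)$.

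Your argument passes through Lyapunov norms. At integer times this yields only $d(T^{-n}y,T^{-n}z)\le C e^{-(\chi-c\varepsilon_0)n}d(y,z)$, with some $c>1$ and a constant $C$ coming from comparing Lyapunov and Riemannian norms. That constant contains $1/(1-e^{-\varepsilon_0})$ and inverse sines of splitting angles. Note also that the block only bounds the \emph{pairwise} angles among $E^u,E^c,E^s$. That does not by itself bound $\angle(E^u,E^c\oplus E^s)$, so your chart-distortion bound $C\ell e^{|k|\varepsilon_0}$ needs an extra dynamical argument and comes out as a power of $\ell e^{|k|\varepsilon_0}$.

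``Adjusting $\varepsilon_0$'' cannot repair this. The block ${\rm PES}^{\chi,\varepsilon_0}_\ell$ and the rate share the same $\varepsilon_0$, and a bound with rate $\chi-c\varepsilon_0$ never implies one with rate $\chi-\varepsilon_0$ for all $n$, whatever the constants.

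The passage to real $t$ is an outright error. The factor $\sup_{0\le s\le 1}{\rm Lip}(T^{-s})$ is multiplicative in a Lipschitz-type inequality, and shrinking $\delta$ does not affect it. Since the block constrains $DT^{\pm n}$ only at integer $n$, a flow-dependent factor such as $\sup_{0\le s\le 1}\|DT^{s}\|$ (from writing $T^{-t}=T^{\lceil t\rceil-t}\circ T^{-\lceil t\rceil}$) cannot be removed in general. What your method honestly proves is $d(T^{-t}y,T^{-t}z)\le C(\ell)e^{-t(\chi-c\varepsilon_0)}d(y,z)$, with $C(\ell)$ also depending on the flow.

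That weaker form is all the paper actually uses: Propositions~\ref{Pro:exceptional-set} and~\ref{Pro:s-leaf-wise} need only exponential decay with an $\ell$-dependent constant. You should state it that way rather than claim $2\ell$.

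If you want the sharp rate and constant at integer times, drop the Lyapunov-norm conversion at the end and argue directly:
\begin{itemize}
\item Identify $T_{T^{-j}z}W^u$ with $E^u(T^{-j}x)$, and compare $\tilde A_n\cdots\tilde A_1$ (from $DT^{-1}$ along $W^u$) with $B_n\cdots B_1=DT^{-n}|_{E^u(x)}$.
\item Use the telescoping sum $\sum_j B_n\cdots B_{j+1}(\tilde A_j-B_j)\tilde A_{j-1}\cdots\tilde A_1$.
\item Bound $B_n\cdots B_{j+1}$ by the first block condition at $k=-j$, and use that $\|\tilde A_j-B_j\|$ decays like $e^{-j(\beta\chi-O(\varepsilon_0))}$.
\item By induction, with $\delta=\delta(\ell)$ small, this gives $\|DT^{-n}|_{T_zW^u}\|\le(1+o(1))\ell e^{-(\chi-\varepsilon_0)n}$. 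The factor $2\ell$ then follows after integrating along a curve in $W^u_\delta(x)$.
\end{itemize}

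\textbf{The rest of the proposal.} Your route is the one the paper relies on. The paper gives no proof of this statement; it simply invokes Pesin theory for the time-one map, with the block defined at integer times. Your Lyapunov-chart graph transform, with $E^c$ folded into $E^{cs}$ so that only domination (never contraction) along $E^{cs}$ is needed, is exactly that theory. Your remark that singularities enter only through the tempered bounds on $E^c$ and the angles is also correct.

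Two technical corrections in that part:
\begin{itemize}
\item The chart radii must be a negative power $(\ell e^{|k|\varepsilon_0})^{-p/\beta}$ rather than $\ell^{-1}e^{-|k|\varepsilon_0}$. The H\"older constant of $DF_k$ in the chart grows like $(\ell e^{|k|\varepsilon_0})^{p}$ with $p\ge 1$, and you need it times $r_k^{\beta}$ to be small; this is where $\varepsilon_0\ll\beta\chi$ is used.
\item The graph transform gives ${\rm Lip}\le 1/10$ only in the Lyapunov norm, whereas the statement wants it in the Riemannian norm on $T_xM$. Here shrinking $\delta$ \emph{is} legitimate, using $D_0\psi^u_x=0$ and the H\"older continuity of $D\psi^u_x$.
\end{itemize}
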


\begin{Remark}\label{Rem:reduce-size}
The quantity $\delta(\chi,\varepsilon_0,\ell)$ satisfies $\delta(\chi,\varepsilon_0,\ell+1)\geq e^{-\varepsilon_0}\delta(\chi,\varepsilon_0,\ell)$.
When $\chi$ and $\varepsilon_0$ are fixed, we write $\delta(\chi,\varepsilon_0,\ell)$ simply as $\delta(\ell)$.
We may reduce the size $\delta$ in Theorem~\ref{Thm:Pesin-manifold} and use a much smaller value. 
Hence, we denote by $W^{u/s}_{\rm loc}(x)$ the local stable/unstable manifolds with the maximal size allowed by Theorem~\ref{Thm:Pesin-manifold}.
\end{Remark}

For each $x\in \bigcup_{\ell>0}{\rm PES}_{\ell}^{\chi,\varepsilon_0}$, the global stable and unstable manifolds are defined by
$$ W^u(x):=\bigcup_{t> 0} T^{t}(W^u_{\loc}(T^{-t}(x))),~~W^s(x):=\bigcup_{t> 0} T^{-t} (W^s_{\loc}(T^{t}(x))).$$
For flows, one also has the center-unstable and center-stable foliations $W^{cu}$ and $W^{cs}$, whose leaves are defined as follows: for $\mu$-almost every $x$,
$$W^{cu}(x)=\bigcup_{t\in\mathbb R}T^t(W^u(x)),~~~W^{cs}(x)=\bigcup_{t\in\mathbb R}T^t(W^s(x)).$$
One can also define the local center-stable and center-unstable manifolds. 
Recall that $\delta=\delta(\chi,\varepsilon_0,\ell)$ is the size of the local stable and unstable manifolds given by Theorem~\ref{Thm:Pesin-manifold}. 
For $x\in {\rm PES}_{\ell}^{\chi,\varepsilon_0}$, we define
$$W^{cu}_{\delta}(x)=T^{[-\delta,\delta]}(W^u_\delta(x)),~~W^{cs}_{\delta}(x)=W^{cs}_{\rm loc}(x)=T^{[-\delta,\delta]}(W^s_\delta(x)).$$
These embedded manifolds can also be represented as $C^{1+\beta}$ graphs with small Lipschitz constants.
We also denoted it by $W^{cu/cs}_{\rm loc}(x)$ when we do not wish to emphasize the precise value of $\delta$, referring to the maximal size allowed.
We recall the local product structure on Pesin blocks.

\begin{Lemma}\label{Lem:transverse-intersection}
Under the assumptions of Theorem~\ref{Thm:Pesin-manifold}, given $\ell \in \NN$, there exist $L\in\mathbb N$ and a constant $0<\varepsilon(\ell)<1$ with the following property: There exists $\alpha_0>0$ such that for every $0<\alpha<\alpha_0$ and every $x,y\in {\rm PES}_{\ell}^{\chi,\varepsilon_0}$ with $d(x,y)<\alpha\varepsilon(\ell)$, one has that $W^{\tau}(y)\cap B(x,\alpha)$ is connected for $\tau\in \{u,s,cu,cs\}$, and 
\begin{align*}
	W^{s}_{\loc}(x) \pitchfork W^{cu}_{\loc}(y) =\{z_1\}\subset {\rm PES}^{\chi,\varepsilon_0}_{L}\cap B(x,\alpha),~~
	W^{u}_{\loc}(x) \pitchfork W^{cs}_{\loc}(y) =\{z_2\}\subset {\rm PES}^{\chi,\varepsilon_0}_{L}\cap B(x,\alpha).
\end{align*}
\end{Lemma}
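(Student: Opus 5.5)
The plan is to reduce the statement to the standard graph-transform picture in Pesin charts, using the uniformity of Theorem~\ref{Thm:Pesin-manifold} on ${\rm PES}_{\ell}^{\chi,\varepsilon_0}$.

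\emph{Step 1: uniform graph representation near $x$.} Since $W^{u/s}_{\rm loc}$ depend continuously on points of the compact set $\overline{{\rm PES}_{\ell}^{\chi,\varepsilon_0}}$ in the $C^1$ topology, and the splitting $E^u\oplus E^c\oplus E^s$ is continuous there with angles bounded below by $\ell^{-1}$, I would fix $\alpha_0\ll\delta(\ell)$. Then, for $y$ close to $x$, the manifolds $W^{u}_{\loc}(y)$, $W^{cu}_{\loc}(y)$, $W^s_{\loc}(y)$, $W^{cs}_{\loc}(y)$, read in the chart $\exp_x$, are graphs over $E^u(x)$, $E^{cu}(x)$, $E^s(x)$, $E^{cs}(x)$ respectively. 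These graphs have Lipschitz constant at most $1/5$, and their value at $0$ is of size $O(d(x,y))$. For $W^{cu}$ and $W^{cs}$ this uses that the flow direction is $C^{1+\beta}$ and, on the Pesin block, is bounded away from zero and uniformly transverse. This is where the center-direction bound in the definition of ${\rm PES}_\ell$ enters.

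Connectedness of $W^\tau(y)\cap B(x,\alpha)$ is then handled as follows. The local piece inside $B(x,\alpha)$ is a Lipschitz graph over a convex set, hence connected. Choosing $\varepsilon(\ell)$ small makes the graph pass through $B(x,\alpha\varepsilon(\ell))$, so it crosses $B(x,\alpha)$ fully. The remaining issue is to exclude other pieces of the global leaf $W^\tau(y)$ re-entering $B(x,\alpha)$. I would interpret $W^\tau(y)$ here as the local leaf of maximal size, as the notation $W^{\tau}_{\rm loc}$ in the displayed formulas suggests. If the global leaf is intended, I would argue instead via the backward contraction in Theorem~\ref{Thm:Pesin-manifold} combined with the choice of a small $\alpha_0$, but this is the weakest point of the plan.

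\emph{Step 2: the intersections.} Since $\dim E^s+\dim E^{cu}=\dim M$, a Lipschitz graph over $E^s(x)$ and a Lipschitz graph over $E^{cu}(x)$ with constants $\le 1/5$ meet in exactly one point. This follows by a contraction-mapping argument: $z\mapsto(\psi^{cu}(\pi_{cu}z),\psi^s(\pi_s z))$ is a contraction. The intersection point satisfies $d(z_1,x)\le C\,d(x,y)$ with $C$ depending only on the angle bound. Taking $\varepsilon(\ell)<1/C$ gives $z_1\in B(x,\alpha)$. Transversality follows because the tangent spaces are close to $E^s(x)$ and $E^{cu}(x)$. The same argument gives $z_2$.

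\emph{Step 3: $z_i\in{\rm PES}_L$ (main obstacle).} Write $z_1\in W^s_{\loc}(x)$ with $z_1=T^{\tau}(w)$, where $w\in W^u_{\loc}(y)$ and $|\tau|\le\delta$. In forward time $z_1$ shadows $x$ exponentially, and in backward time it shadows $T^{\tau}$ of the orbit of $y$. The splitting at $z_1$ is defined by
\[
E^s(z_1)=T_{z_1}W^s_{\loc}(x),\qquad E^u(z_1)=DT^{\tau}\,T_wW^u_{\loc}(y),
\]
together with the flow direction. The estimates then follow from standard Hölder-distortion bounds along stable and unstable manifolds. In the forward direction we use $\sum_n d(T^nz_1,T^nx)^\beta<\infty$, controlled by $\ell$, and in the backward direction the analogous sum for $y$. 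These give
\[
\|DT^n|_{E^s(T^kz_1)}\|\le C(\ell)\,\|DT^n|_{E^s(T^kx)}\|,
\]
and the analogous estimates for $E^u$ and for the co-norms. For $k<0$ one needs to transport the stable estimates backward, where the comparison with $x$ fails. Here I would use the invariance of $E^s(z_1)$ together with the angle bound at $z_1$ (obtained from Step 2) and the tempered bounds $\ell e^{|k|\varepsilon_0}$, absorbing constants by taking $L=L(\ell)$ large. This step is the delicate one, and I would follow the standard diffeomorphism proof (e.g.\ as in \cite{LuY24}) with the extra bookkeeping for $E^c$ and the time shift $\tau$.
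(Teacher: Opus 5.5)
The paper gives no proof of this lemma; it is quoted as the standard local product structure on Pesin blocks. Your three-step plan is the usual route, but as written it has one genuine gap and several points to correct.

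\textbf{The gap: a lower bound on the vector field.} Write $X$ for the generating vector field, so that $E^c=\mathbb{R}X$. The uniformity of $W^{cu}_{\loc}$ and $W^{cs}_{\loc}$ in Steps~1--2 really needs $\inf_{\PES_\ell}\|X\|>0$. The plaque $W^{cu}_\delta(y)=T^{[-\delta,\delta]}(W^u_\delta(y))$ extends only about $\delta\|X\|$ in the flow direction, and its tangent planes stay close to $E^{cu}(y)$ only at distances $\ll\|X(y)\|$. Bridging a center offset comparable to $d(x,y)$ takes flow time about $d(x,y)/\|X\|$, which must stay below $\delta$. So without such a bound no $\alpha_0$ depending only on $\ell$ works.

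You attribute this bound to the center-direction condition, but that cannot supply it. Since $\|D_{T^kx}T^n|_{E^c}\|=\|X(T^{k+n}x)\|/\|X(T^kx)\|$, the condition only constrains ratios and is blind to the size of $\|X(x)\|$. Uniform transversality, for its part, comes from the angle condition.

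The fix is to put $\|X(x)\|\ge\ell^{-1}$ into the definition of $\PES_\ell$, which is harmless since $\mu$ is regular. The center condition then propagates it as $\|X(T^kx)\|\ge\ell^{-2}e^{-2|k|\varepsilon_0}$. This tempered bound is exactly what Step~3 needs to compare $\|X(T^kz_1)\|$ with $\|X\|$ along the shadowed orbits, because $d(T^kz_1,T^kx)$ for $k\ge0$ and $d(T^kz_1,T^{k+\tau}y)$ for $k<0$ decay at rate $\chi-\varepsilon_0>2\varepsilon_0$.

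\textbf{Connectedness.} Drop your fallback for global leaves. The claim is false for global $W^\tau(y)$: for a transitive Anosov flow every leaf of $W^{cu}$ is dense, so it meets $B(x,\alpha)$ in infinitely many plaques. Only the local reading is viable. Under that reading, the base set $\{v:\ |v+\psi(v)|<\alpha\}$ in the chart $\exp_x$ need not be convex. What is true is that it is star-shaped about any $v_0$ with $|v_0+\psi(v_0)|\le\alpha/5$. The reason is that ${\rm Lip}(\psi)\le1/5$ makes $t\mapsto|v_0+te+\psi(v_0+te)|$ increasing once $t\ge3|v_0+\psi(v_0)|$.

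\textbf{Step~3.} This step carries most of the lemma and is still only a sketch. The inequality $\|DT^n|_{E^s(T^kz_1)}\|\le C(\ell)\|DT^n|_{E^s(T^kx)}\|$ is not what H\"older distortion gives when $\dim E^s>1$, since distortion controls Jacobians, not norms of products. It is also more than you need: what you need are the block inequalities themselves, with the \emph{same} $\chi$ and $\varepsilon_0$.

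The real difficulty is that $E^u(T^kz_1)$ for $k\ge0$, and $E^s(T^kz_1)$ for $k<0$, are not tangent to invariant manifolds of the orbit being shadowed. Proceed as follows.
\begin{itemize}
\item Decompose at $z_1$, where Step~2 gives a uniform angle, along the splitting of the shadowed orbit ($x$ in forward time, $T^\tau y$ in backward time).
\item Run a cone argument along that orbit in adapted norms. Use sup-type norms on the directions being contracted or kept neutral, e.g.\ $\|v\|'=\sup_{n\ge0}e^{(\chi-\varepsilon_0)n}\|DT^nv\|$ on $E^s$ in forward time. Use inf-type norms on the expanded directions.
\item These norms dominate, respectively are dominated by, the Riemannian norm with constant~$1$; on the other side they are within $\ell e^{|k|\varepsilon_0}$ of it. So at the far endpoint of a segment you can recombine components by the triangle inequality.
\end{itemize}
Using the tempered angle $\ell^{-1}e^{-|k\pm n|\varepsilon_0}$ at the far endpoint instead, as your ``angle bound plus tempered constants'' suggests, costs an extra factor of order $e^{-\varepsilon_0 n}$. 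That degrades the exponent, and no choice of $L$ can absorb it.

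Two further points. First, the one-step errors of size $d(T^jz_1,\cdot)^\beta$ are amplified by tempered factors of order $\ell^2e^{2|j|\varepsilon_0}$, so their summability requires $\varepsilon_0$ small relative to $\beta\chi$. Second, $z_1$ need not be Lyapunov regular. Hence $z_1\in\PES_L$ has to be read with the splitting you define via $T_{z_1}W^s(x)$, $T_{z_1}W^u(T^\tau y)$ and $X(z_1)$, not the Oseledets one. Say this explicitly.
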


\subsection{Rectangles}
\begin{Definition}\label{Def:rectangle}
	For $\alpha>0$, a measurable set $\Pi^s$ is said to be an $(\alpha,s)$-rectangle at a point $w\in {\rm PES}_{\ell}$ if the following properties hold:
	\begin{enumerate}
		\item $w\in\Pi^s\subset {\rm PES}_{L}^{\chi,\varepsilon_0}\cap B(w,\alpha)$, $L$ be the integer corresponding to $\ell$ given by Lemma~\ref{Lem:transverse-intersection};
		\item $W^{cu}_{\rm loc}(y)\cap W^s_{\rm loc}(z)\in\Pi,~\forall y,z\in \Pi^s$. 
	\end{enumerate}
	Similarly, one can define  $(\alpha,u)$-rectangle.
	When $\alpha$ is fixed, they are simply called $s$-rectangles and $u$-rectangles.
\end{Definition}

\begin{Proposition}\label{Pro:rectangle-covering}
	Let $\ell \in \NN$ and let $L$ be the integer corresponding to $\ell$ given by Lemma~\ref{Lem:transverse-intersection}.
	For every $\alpha>0$, there are disjoint $(\alpha,s)$-rectangles $\{\Pi_1^s,\Pi_2^s,\cdots,\Pi_{N_*}^s\}$ such that
	$${\rm PES}_{\ell}\subset \bigcup_{i=1}^{N_*} \Pi_{i}^s\subset {\rm PES}_{L}.$$
	Similarly, the same conclusion holds for $(\alpha,u)$-rectangles.
\end{Proposition}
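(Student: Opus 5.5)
Since $\PES_\ell$ is compact, I would cover it by finitely many small balls, build an $s$-rectangle around each ball by saturating with the bracket, and then disjointify using the product structure. Fix $\alpha>0$; shrinking $\alpha$ below $\alpha_0$ only makes the statement stronger, so assume $\alpha<\alpha_0$. Put $\alpha'=\alpha\varepsilon(\ell)/4$ and choose points $x_1,\dots,x_m\in\PES_\ell$ with $\PES_\ell\subset\bigcup_j B(x_j,\alpha')$. For $y,z\in\PES_\ell$ with $d(y,z)<\alpha\varepsilon(\ell)$, write $[y,z]:=W^{cu}_{\loc}(y)\cap W^s_{\loc}(z)$. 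Lemma~\ref{Lem:transverse-intersection} says this is a single point, lies in $\PES_L\cap B(y,\alpha)$, and is consistent because the leaves meet $B(\cdot,\alpha)$ in connected pieces. For each $j$, let
$$R_j:=\{[y,z]:~y,z\in \PES_\ell\cap \overline{B(x_j,\alpha')}\}.$$
Then $\PES_\ell\cap B(x_j,\alpha')\subset R_j$, since $[y,y]=y$, and $R_j\subset\PES_L\cap B(x_j,\alpha)$.

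Next I would check that $R_j$ is closed under the bracket, which is condition 2 of Definition~\ref{Def:rectangle}. Take $p=[y,z]$ and $q=[y',z']$. The point $[p,q]$ lies on $W^{cu}_{\loc}(p)=W^{cu}_{\loc}(y)$ locally and on $W^s_{\loc}(q)=W^s_{\loc}(z')$, so $[p,q]=[y,z']\in R_j$. This identification of local leaves uses the connectedness in Lemma~\ref{Lem:transverse-intersection} and the $C^1$ continuity of local manifolds from Theorem~\ref{Thm:Pesin-manifold}. Measurability of $R_j$ follows because the bracket is continuous on the compact set $\PES_\ell\times\PES_\ell$ restricted to nearby pairs, so $R_j$ is compact.

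The main obstacle is disjointness, because the $R_j$ overlap and a plain set difference destroys the product structure. I would handle it with the standard Bowen/Sinai refinement. For each overlapping pair $R_j,R_k$, split $R_j$ into four pieces according to whether a point's $cu$-coordinate $W^{cu}_{\loc}(p)\cap R_j$ meets $R_k$ and whether its $s$-coordinate $W^s_{\loc}(p)\cap R_j$ meets $R_k$. Each piece is again a product of an $s$-set and a $cu$-set inside $R_j$, so it is bracket-closed. Taking the common refinement over all $k$ gives finitely many disjoint measurable rectangles whose union is $\bigcup_j R_j$. Each piece is contained in some $R_j$ and therefore in $\PES_L\cap B(w,2\alpha)$ for any of its points $w$; replacing $\alpha$ by $\alpha/2$ from the start gives size $\alpha$. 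To meet the requirement that the base point $w$ lie in $\PES_\ell$, discard the pieces that miss $\PES_\ell$. The inclusion $\PES_\ell\subset\bigcup\Pi_i^s$ still holds, and for each remaining piece choose $w\in\Pi^s_i\cap\PES_\ell$.

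The careful part is checking that the refined pieces remain bracket-closed when the $cu$-leaf carries a flow direction. Here I would use that $W^{cu}_{\loc}=T^{[-\delta,\delta]}W^u_{\loc}$ is a single embedded leaf near $w$, so the product coordinates $(W^{cu},W^s)$ give a homeomorphism onto a product set as in the diffeomorphism case. The $(\alpha,u)$-rectangles are handled symmetrically, with the roles of $s,cu$ replaced by $u,cs$ and using $z_2$ from Lemma~\ref{Lem:transverse-intersection}.
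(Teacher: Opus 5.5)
Your proposal is correct. It builds the local rectangles exactly as the paper does: saturate $\PES_\ell\cap B(x_j,\cdot)$ under the bracket and use $[[y,z],[y',z']]=[y,z']$. Your choice $\alpha'=\alpha\varepsilon(\ell)/4$ handles cleanly what the paper dispatches by ``reducing $\alpha$''.

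Where you genuinely differ is the disjointification.

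\textbf{The paper's route.} It proves a pairwise Claim that replaces two overlapping rectangles $\Pi,\Pi'$ by five disjoint ones:
\begin{itemize}
\item $\Pi\cap\Pi'$;
\item the parts of $\Pi\setminus\Pi'$ and of $\Pi'\setminus\Pi$ lying inside, respectively outside, the $W^s_{\loc}$-saturation of $\Pi\cap\Pi'$.
\end{itemize}
Only the stable coordinate is used. The iteration over the finite family is left implicit ("follows directly from the claim").

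\textbf{Your route.} You use Bowen's four-way split $T^n_{jk}$ of $R_j$ according to both the $cu$- and the $s$-coordinate, and take atoms of the join. This handles all overlaps in one step.

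To make this airtight, say two things explicitly.
\begin{enumerate}
\item The join is over all pairs $(j,k)$. The atom of $x$ is $\bigcap\{T^n_{jk}:\ x\in R_j,\ R_j\cap R_k\neq\emptyset,\ x\in T^n_{jk}\}$. Refining each $R_j$ only over $k$ with $j$ fixed leaves overlapping atoms inside $R_j\cap R_{j'}$.
\item Disjointness rests on the identity $T^1_{jk}=R_j\cap R_k$, which follows from bracket-closedness of $R_k$. If $W^{cu}_{\loc}(p)\cap R_j$ meets $R_k$ at $a$ and $W^s_{\loc}(p)\cap R_j$ meets $R_k$ at $b$, then $p=[a,b]\in R_k$. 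Consequently, two atoms that meet belong to the same sets $R_j$ and to the same pieces $T^n_{jk}$, hence coincide.
\end{enumerate}
Bracket-closedness of the atoms then follows because the bracket is defined intrinsically by local manifolds, so each $T^n_{jk}$, and any intersection of them, is bracket-closed.

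\textbf{What each approach buys.} Your route gives a one-shot construction. It also gives explicit measurability (compactness of $R_j$ and of the saturations). It explicitly restores the base-point and radius requirements of Definition~\ref{Def:rectangle} after splitting, by discarding pieces missing $\PES_\ell$ and rescaling $\alpha$; the paper does not revisit these. The paper's route is shorter and produces fewer pieces, at the cost of an unspelled-out iteration.
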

\begin{proof}
    For every $\alpha>0$, we first show that there are finitely many $(\alpha,s)$-rectangles $\{\Pi_1^s,\cdots,\Pi_{N}^s\}$ such that ${\rm PES}_{\ell} \subset \bigcup_{j=1}^{N} \Pi_j^s \subset {\rm PES}_{L}$.
    
    By the compactness of ${\rm PES}_{\ell}$, there exist finitely many points $\{w_i\}_{i=1}^{N^*} \subset {\rm PES}_{\ell}$ such that ${\rm PES}_{\ell}= \bigcup_{j=1}^{N}B(w_j,\alpha \varepsilon(\ell)) \cap {\rm PES}_{\ell}$, where $\varepsilon(\ell)$ is given by Lemma~\ref{Lem:transverse-intersection}.
    For each $1\leq j\leq N$,
    $$\Pi_{j}^s:=\big\{y:~ \exists x,z\in {\rm PES}_{\ell}\cap B(w_j,\alpha\varepsilon(\ell)),~\text{s.t.}~W^{s}_{\rm loc}(x) \pitchfork W^{cu}_{\rm loc}(z)=\{y\}\big\}.$$
    It remains to verify that $\Pi_j^s$ is an $s$-rectangle.
    By reducing $\alpha>0$ if necessary, given any two points $z_1,z_2\in \Pi_j^s$, there exist points $x_1,x_2,y_1,y_2\in {\rm PES}_{\ell}\cap B(w_j,\alpha\varepsilon(\ell))$ such that
    $$z_1\in W^s_{\loc}(x_1) \pitchfork W^{cu}_{\loc}(x_2),~z_2\in W^s_{\loc}(y_1) \pitchfork W^{cu}_{\loc}(y_2).$$
    Therefore, 
    $$ W^s_{\loc}(z_1) \pitchfork W^{cu}_{\loc}(z_2)=W^s_{\loc}(x_1) \pitchfork W^{cu}_{\loc}(y_2)\subset \Pi_j^s.$$
    This shows that $\Pi_j^s$ is an $(\alpha,s)$-rectangle.
    However, $\{\Pi_1^s,\cdots,\Pi_{N}^s\}$ are not pairwise disjoint.
    We will need the following result:
    \begin{Claim}\label{Lem:split-rectangle}
    	Let $\Pi$ and $\Pi'$ be two $(\alpha,s)$-rectangles with $\Pi \cap \Pi' \neq \emptyset$. 
    	Then there exist at most five $(\alpha,s)$-rectangles $\{\Pi_i\}_{i=1}^{5}$ such that ${\rm PES}_{\ell}\cap(\Pi_1 \cup \Pi_2 \cup \cdots \cup \Pi_5)={\rm PES}_{\ell}\cap(\Pi\cup \Pi' )$ and $\Pi_i \cap \Pi_j \neq \emptyset$ for all $i\neq j$.
    \end{Claim}
    \begin{proof}
    	We define the five rectangles as follows:
    	\begin{align*}
    		&\Pi_1=\Pi \cap \Pi',~~\Pi_2=\Pi\setminus\bigcup_{y\in \Pi_1} W^s_{\rm loc}(y),~~\Pi_3=\Pi'\setminus \bigcup_{y\in \Pi_1} W^s_{\rm loc}(y), \\
    		&\Pi_4=(\Pi\setminus\Pi_1)\cap \bigcup_{y\in \Pi_1} W^s_{\rm loc}(y),~~\Pi_5= (\Pi'\setminus\Pi_1)\cap \bigcup_{y\in \Pi_1} W^s_{\rm loc}(y).
    	\end{align*}
    	By construction, we have $\Pi \cup \Pi'= \Pi_1 \cup \cdots \cup \Pi_5$ and $\Pi_i \cap \Pi_j=\emptyset,~i\neq j$.
    	It remains to verify that each $\Pi_i$ is an $(\alpha,s)$-rectangle. 
    	A corresponding statement for diffeomorphisms can be found in \cite[Lemma 8.2]{Pes77}.
        \begin{enumerate}
        	\item [$\Pi_1$:] For any $y,z\in \Pi_1=\Pi\cap\Pi’$, we have $W^{cu}_{\rm loc}(y)\cap W^s_{\rm loc}(z)\in\Pi\cap\Pi'=\Pi_1$. 
        	Thus, $\Pi_1$ is an $(\alpha,s)$-rectangle.
        	\item [$\Pi_2$:] For any $y,z\in \Pi_2=\Pi\setminus \bigcup_{x\in \Pi_1} W^s_{\rm loc}(x)$, we have $w\in W^{cu}_{\rm loc}(y)\cap W^s_{\rm loc}(z)\subset\Pi$.
        	If $w\in \bigcup_{x\in \Pi_1} W^s_{\rm loc}(x)$, then we also have $z\in W^s_{\rm loc}(x)$ for some $x\in \Pi_1$, which is a contradiction. 
        	Therefore, $W^{cu}_{\rm loc}(y)\cap W^s_{\rm loc}(z)\in\Pi\setminus\bigcup_{x\in \Pi_1} W^s_{\rm loc}(x)$.
        	\item [$\Pi_3$:] This is similar to the case of $\Pi_2$.	 
        	\item [$\Pi_4$:] For any $y,z\in\Pi_4= (\Pi\setminus\Pi_1)\cap \bigcup_{y\in \Pi_1} W^s_{\rm loc}(y)$, since $\Pi$ is a rectangle, we have $w\in W^{cu}_{\rm loc}(y)\cap W^s_{\rm loc}(z)\subset\Pi$. 
        	Since $z\in W_{\rm loc}^s(x)$ for some $x\in \Pi_1$, it follows that $z\in W^s_{\rm loc}(x)$. 
        	Now suppose, by contradiction, that $w\in \Pi_1$.
        	Choose $x\in \Pi_1$ such that $y\in W^s_{\rm loc}(x)$. 
        	Then $y\in W^s_{\rm loc}(x)\cap W^{cu}_{\rm loc}(w)\subset\Pi_1$ since $\Pi_1$ is a rectangle. 
        	This is a contradiction. 
        	Hence $w\notin \Pi_1$, and it follows that $\Pi_4$ is an $(\alpha,s)$-rectangle.
        	\item [$\Pi_5$:] This is similar to the case of $\Pi_4$.	
        \end{enumerate}
        This completes the proof of the Claim
    \end{proof}
        The proof of Proposition \ref{Pro:rectangle-covering} follows directly from the claim.
\end{proof}

\subsection{Holonomy maps and absolutely continuity of unstable foliations}

We recall the definition of local stable holonomy maps $h^s$ and the local center-stable holonomy maps $h^{cs}$.

\begin{Definition}
	Fix $\ell\in \mathbb{N}$ and take $\delta:=\delta(\ell)$ be as in Lemma \ref{Thm:Pesin-manifold} and $\varepsilon(\ell)$ be as in Lemma~\ref{Lem:transverse-intersection}.
	\begin{itemize}
	\item	For any points $x,y\in \PES_{\ell}$ with $d(x,y)<\delta\varepsilon(\ell)$ and any $z\in W^{cu}_{\delta/2}(x)\cap {\rm PES}_{\ell}$, the manifold $W^{cu}_{\rm loc}(y)$ intersects $W^{s}_{\rm loc}(z)$ at a unique point $w$. 
    Then the map $h_{x,y}^s$ defined by $h_{x,y}^s(z):=w$ is a map from $W^{cu}_{\delta/2}(x)\cap {\rm PES}_{\ell}^{\chi,\varepsilon_0}$ to $W^{cu}_{\rm loc}(y)$.
    \item	For any points $x,y\in \PES_{\ell}$ with $d(x,y)<\delta\varepsilon(\ell)$ and any $z\in W^{u}_{\delta/2}(x)\cap {\rm PES}_{\ell}$, the manifold $W^{u}_{\rm loc}(y)$ intersects $W^{cs}_{\rm loc}(z)$ at a unique point $w$. 
    Then the map $h_{x,y}^{cs}$ defined by $h_{x,y}^{cs}(z):=w$ is a map from $W^{u}_{\delta/2}(x)\cap {\rm PES}_{\ell}^{\chi,\varepsilon_0}$ to $W^{u}_{\rm loc}(y)$.
	\end{itemize}
	In particular, if $~\Pi$ is a $(\alpha,u)$-rectangle for some $\alpha\ll \delta$, the map $h_{x,y}^{cs}:\Pi \cap W^{u}_{\rm loc}(x)\to \Pi \cap W^{u}_{\rm loc}(y)$ is a bijection.
\end{Definition}
Both the local stable holonomy $h^s$ and the local center-stable holonomy $h^{cs}$ depend on the size $\delta$. 
For notational simplicity, we omit the explicit dependence on $\delta$.

Let $(X, \cB_X,\mu_X)$ and $(Y, \cB_Y,\mu_Y)$ be two probability space, and let $f:X\to Y$ be a measurable transformation.
A measurable function $\Jac(f): X \to [0,+\infty)$ is called a Jacobian of $f$ with respect to $\mu_X$ and $\mu_Y$ if
$$ \mu_Y(A)=\int_{f^{-1}(A)} \Jac(f)(x)~{\rm d}\mu_X(x),~\forall A\in \cB_Y.$$
Note that such a Jacobian exists if and only if $\mu_Y$ is absolutely continuous with respect to the pushforward measure $\mu_X \circ f^{-1}$.

Denote by ${\rm Leb}_{W^u(x)}$ and ${\rm Leb}_{W^{cu}(x)}$ the Lebesgue measures on $W^u(x)$ and $W^{cu}(x)$ induced by the Riemannian metric, respectively.
We now recall the absolute continuity of the (center) unstable foliation.
All Jacobians are understood to be taken with respect to the Lebesgue measures on the corresponding manifolds.
\begin{Theorem}\label{Thm:holonomy}
Fix $\ell\in \NN$ and $\delta:=\delta(\ell)$ be as in Lemma \ref{Thm:Pesin-manifold}.
For each $\varepsilon>0$, there exists $\alpha>0$ such that, for every $x,y\in \PES_{\ell}^{\chi,\varepsilon_0}$ with $d(x,y)<\alpha$,  $h^s_{x,y}$ and $h^{cs}_{x,y}$ well defined on $W^{cu}_{\delta/2}(x)$ and $W^u_{\delta/2}(x)$, respectively, and their Jacobians satisfy 
$${\rm Jac}(h^s_{x,y})\in(1-\varepsilon,1+\varepsilon),~~~{\rm Jac}(h^{cs}_{x,y})\in(1-\varepsilon,1+\varepsilon).$$
\end{Theorem}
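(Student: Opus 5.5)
The plan is to deduce Theorem~\ref{Thm:holonomy} from the classical absolute continuity theorem for the time-one map $f=T^1$. Then we upgrade it to Jacobian bounds close to $1$ by continuity on the Pesin block. For the time-one map, $\mathrm{PES}_\ell^{\chi,\varepsilon_0}$ is a Pesin block in the usual sense with respect to the splitting $E^u\oplus E^{cs}$ (the center direction counted in the weak part, since its exponent $0$ exceeds $-\chi$). The local stable manifolds of Theorem~\ref{Thm:Pesin-manifold} are the strong stable manifolds of $f$ for the rate $\chi-\varepsilon_0$, and $W^{cu}_{\rm loc}$ is the corresponding weak-unstable (center-unstable) plaque. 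I will treat $h^{cs}_{x,y}$ and $h^s_{x,y}$ separately but by the same scheme.

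For $h^s_{x,y}$ between $W^{cu}_{\delta/2}(x)$ and $W^{cu}_{\rm loc}(y)$, I follow the Pesin/Pugh--Shub argument. Fix $z\in W^{cu}_{\delta/2}(x)\cap\mathrm{PES}_\ell$ and $w=h^s_{x,y}(z)$. Write
\[
\mathrm{Jac}(h^s_{x,y})(z)=\prod_{n=0}^{\infty}\frac{\det\bigl(Df|_{T_{f^n z}f^nW^{cu}(x)}\bigr)}{\det\bigl(Df|_{T_{f^n w}f^nW^{cu}(y)}\bigr)},
\]
obtained as the limit of $\mathrm{Jac}(h_N)$, where $h_N$ is the holonomy along straight transversals near $f^N z$ after iterating forward $N$ times. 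Since $d(f^nz,f^nw)\le 2\ell e^{-n(\chi-\varepsilon_0)}d(z,w)$, the Hölder continuity of $Df$ (exponent $\beta$) makes the $n$-th factor $1+O(e^{-n\beta(\chi-\varepsilon_0)}d(z,w)^{\beta'})$, with an extra factor $e^{n\varepsilon_0}$ of loss controlled because $\varepsilon_0\ll\chi$. The tangent spaces $T f^nW^{cu}$ at the two points also become exponentially close, by the graph transform / $C^1$ continuity of the plaques on Pesin blocks. Hence the product converges uniformly, and
\[
|\log \mathrm{Jac}(h^s_{x,y})(z)|\le C(\ell)\,d(z,w)^{\beta'}+\theta\bigl(d_{C^1}(W^{cu}_{\rm loc}(x),W^{cu}_{\rm loc}(y))\bigr)
\]
for a modulus $\theta$ coming from the $N=0$ term. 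This last term comes from comparing Lebesgue measures of two $C^1$-close transversals along nearly parallel leaves.

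Now, given $\varepsilon>0$, I choose $\alpha$ small. Then $d(x,y)<\alpha$ forces $d(z,w)$ to be small, uniformly for $z$ in the block, via the local product structure of Lemma~\ref{Lem:transverse-intersection}. It also forces $W^{cu}_{\rm loc}(x)$ and $W^{cu}_{\rm loc}(y)$ to be $C^1$-close, by the continuous dependence in Theorem~\ref{Thm:Pesin-manifold} and compactness of $\mathrm{PES}_\ell$. This gives $\mathrm{Jac}\in(1-\varepsilon,1+\varepsilon)$. The case of $h^{cs}_{x,y}$ is the same computation with $W^u$ transversals. Alternatively it factors as $h^s$ composed with a flow-time projection along $E^c$ inside $W^{cu}(y)$; that projection is a $C^{1+\beta}$ map whose Jacobian tends to $1$ as the time shift tends to $0$, and the time shift tends to $0$ with $d(x,y)$.

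The main obstacle is the justification of the infinite-product formula with \emph{uniform} control on the Pesin block. The difficulty is the center direction: for $f=T^1$ the splitting $E^{cs}=E^c\oplus E^s$ has $E^c$ with zero exponent, and near fixed points $\|X\|$ can be tiny. However, only the bound $\ell^{-1}e^{-(|k|+n)\varepsilon_0}\le\|DT^n|_{E^c}\|\le\ell e^{(|k|+n)\varepsilon_0}$ is needed. This bound keeps the center-unstable determinant comparisons tempered, so the contraction along $W^s$ dominates. I would first verify the estimate of the angle between $Tf^nW^{cu}(x)$ and $Tf^nW^{cu}(y)$ by a graph-transform (cone) argument using the Pesin-block bounds, and then apply the standard convergence argument.
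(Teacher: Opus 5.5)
Your proposal is correct and takes essentially the paper's route. The paper simply invokes the Barreira--Pesin absolute continuity theorem (with its infinite-product formula for the Jacobian) for $h^s$, and obtains $h^{cs}$ by composing $h^s$ with the flow over a short time, which is your second option for $h^{cs}$. Your first option for $h^{cs}$, ``the same computation with $W^u$ transversals'', does not work verbatim. Since $W^{cs}$-leaves are not contracted by $T^1$, $d(T^nz,T^nw)$ need not tend to $0$, and the factors of the product need not tend to $1$. So use the factorization, and make it rigorous via the equivariance $h^s(T^\tau z)=T^\tau h^s(z)$ together with a disintegration of Lebesgue measure on $W^{cu}$ along flow lines; this is needed because the $(\dim E^u+1)$-dimensional Jacobian of $h^s$ does not by itself control its restriction to $W^u(x)$.
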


The estimate for ${\rm Jac}(h^s)$ in Theorem~\ref{Thm:holonomy} follows from \cite[Theorem 8.16]{BaP13}.
The corresponding property for ${\rm Jac}(h^{cs})$ holds since $h^{cs}$ can be written as the composition of $h^s$ with the flow over a short time.

\subsection{Unstable stacks and SRB properties}
Let $\mu$ be an ergodic hyperbolic measure of the flow so that $\mu(\bigcup_{\ell>0} \PES_{\ell})=1$.
Fix $\ell\in \mathbb{N}$ and take $\delta:=\delta(\ell)$ be as in Lemma \ref{Thm:Pesin-manifold}.
Let $0<r \ll \delta$ and let $E\subset B(w,r\varepsilon(\ell))\cap \PES_{\ell}$ be a compact subset of $\PES_{\ell}$ with $\mu(E)>0$.
The $r$-local unstable stack over $E$ is defined by
$$S:=S_{E}^r=\bigcup_{x\in E}W^u_{\loc}(x) \cap B(w,r).$$
Similarly, one defines the $r$-local center-unstable stacks over $E$.
For each $y\in S$, choose $x_y\in E$ and define $\xi(y):=W_{\loc}^u(x_y)\cap B(w,r)$.
It is clear that when $r$ is small enough, $\xi(y)$ does not depend on the choice of $x_y$.
Since the local unstable manifolds depend continuously on $\PES_\ell$, it follows that $\xi$ is a measurable partition of $S$.
Recall that in \cite[Proposition 3.1]{LeS82}, by choosing a suitable $r$ (there are uncountably many such choices), the partition $\eta:=\bigvee_{n\geq 0}f^{n}\hat{\xi}$,
(where $\hat{\xi}:=\xi\cup (M \setminus S)$), is a measurable partition of $M$ that subordinated to $W^u$, meaning that:
\begin{itemize}
	\item $\eta \le T^{-1}\eta$; i.e., $(T^{-1}\eta)(x)\subset \eta(x)$ for $\mu$-almost every $x$;
	\item for $\mu$-almost every $x$, $\eta(x)\subset W^u(x)$ and $\eta(x)$ contains a neighborhood of $x$ in $W^u(x)$;
	\item the partitions $\{T^n\xi^s\}_{n\leq 0}$ generate $\cB_M$ modulo $\mu$;
	\item for $\mu$-almost every $x$, $\bigcup_{n>0}T^n\big(\eta(f^{-n}(x))\big)=W^u(x)$.
\end{itemize}

Let $\mu|_{S}(\cdot):=\mu(S\cap \cdot)$. 
By the Rokhlin disintegration theorem, there exists a family of probability measures $\{\mu_{\xi(x)}\}$ and $\{\mu_{\eta(x)}\}$ such that $\mu|_{S}=\int_{S} \mu_{\xi(x)} {\rm d}\mu(x)$ and $\mu=\int \mu_{\eta(x)} {\rm d}\mu(x)$,
where $\mu_{\xi(x)}$ is supported on $\xi(x)$ and $\mu_{\eta(x)}$ is supported on $\eta(x)$.
SRB measures have nice geometric properties. 
Assume that $\mu$ is a hyperbolic SRB measure. 
Then the following theorem holds (see \cite[Section 6]{LeY85}):

\begin{Theorem}\label{Thm:SRB}
Assume that $\mu$ is an ergodic hyperbolic SRB measure, and let $\eta$ be the partition defined above.
Then, for $\mu$-amlost every $x$ we have $\mu_{\eta(x)}$ is absolutely continuous with respect to $\Leb_{W^u(x)}$, and its density function $\rho_{\eta}:~\eta(x)\to \mathbb (0,+\infty)$ can be written as
\begin{equation}
	\rho_{\eta}(y):= \frac{{\rm d}\mu_{\eta(x)}}{{\rm d}\Leb_{W^u(x)}}(y)=\frac{\Delta(x,y)}{\int_{\eta(x)}\Delta(x,y)~{\rm d}\Leb_{W^u(x)}(y)},~~\forall y\in \eta(x),
\end{equation}
where
$$\Delta(x,y):=\lim_{n \to \infty} \prod_{k=1}^{n} \frac{{\rm Jac}(D_xT^{-k}|_{E^u(x)})}{{\rm Jac}(D_yT^{-k}|_{T_yW^u(x)})}.$$
\end{Theorem}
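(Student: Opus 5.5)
The plan follows Ledrappier--Young~\cite[Section 6]{LeY85}, adapted to the time-one map $T=T^1$. I would first reduce to the time-one map. Since $\mu$ is an ergodic hyperbolic SRB measure, it is regular, and $\mu(\bigcup_\ell \PES_\ell)=1$. The strong unstable manifolds $W^u$ of the flow coincide with the Pesin unstable manifolds of $T$ along $E^u$. The partition $\eta$ built from the stack $S$ via \cite[Proposition 3.1]{LeS82} is increasing ($\eta\le T^{-1}\eta$), subordinate to $W^u$, and generating. The SRB hypothesis means conditional measures on unstable leaves are absolutely continuous. A standard argument transfers this to $\eta$: the conditionals of $\mu$ on $\eta$ refine those on local unstable plaques. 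So $\mu_{\eta(x)}\ll \Leb_{W^u(x)}$ for $\mu$-a.e.\ $x$, and what remains is to identify the density.

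Next I would derive the transformation rule. Set $g(y)=\rho_\eta(y)$ on $\eta(x)$. Invariance of $\mu$ and $T^{-1}\eta\ge\eta$ give
\[
\mu_{\eta(x)}|_{(T^{-1}\eta)(x)}=\mu_{\eta(x)}((T^{-1}\eta)(x))\,\mu_{(T^{-1}\eta)(x)}.
\]
Uniqueness of disintegration gives $T_*\mu_{(T^{-1}\eta)(x)}=\mu_{\eta(Tx)}$. Combining these with the change of variables formula for $T$ on unstable leaves yields, for $y,z$ in the same element,
\[
\frac{g(y)}{g(z)}=\frac{\Jac(D_yT|_{T_yW^u})}{\Jac(D_zT|_{T_zW^u})}\cdot\frac{g(Ty)}{g(Tz)}.
\]
Iterating backwards, this becomes $g(y)/g(x)=\prod_{k=1}^n \frac{\Jac(D_xT^{-k}|_{E^u})}{\Jac(D_yT^{-k}|_{T_yW^u})}\cdot \frac{g(T^{-n}y)}{g(T^{-n}x)}$.

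The limit step is where I expect the main obstacle. First, the product converges. Since $y\in W^u(x)$, Theorem~\ref{Thm:Pesin-manifold} gives $d(T^{-k}x,T^{-k}y)\le C e^{-k(\chi-\varepsilon_0)}$. The H\"older continuity of $DT$ (class $C^{1+\beta}$), together with H\"older dependence of $T_yW^u$ along unstable leaves in Pesin charts, makes the log-terms summable. The $e^{k\varepsilon_0}$ loss is absorbed because $\varepsilon_0\ll\beta\chi$. Singularities do not interfere, since everything happens along a single unstable leaf of a regular point. Second, $g(T^{-n}y)/g(T^{-n}x)\to1$ along a subsequence. To show this, I would use the entropy argument: $H_\mu(T^{-1}\eta\mid\eta)=h_\mu(T)=\int\log\Jac(DT|_{E^u})\,d\mu$ by the Pesin entropy formula, which holds for SRB measures (in both directions). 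Equality in Jensen's inequality then forces $g$ to coincide with the candidate density $\Delta(x,\cdot)/\int\Delta$, as in Ledrappier--Young. The first route is to verify that the candidate measures $\nu_{\eta(x)}$ with density proportional to $\Delta(x,\cdot)$ are consistent under $T$, so $\sum\mu_{\eta(x)}$-a.e.\ one has $\log\frac{d\mu_{\eta}}{d\nu_\eta}$ invariant. The second route is to show $\int\log\frac{d\mu}{d\nu}\,d\mu=0$ via the entropy identity, hence $\mu_\eta=\nu_\eta$. Finally, normalizing gives the stated formula for $\rho_\eta$; positivity follows since $\Delta>0$.
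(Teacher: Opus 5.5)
Your outline is the Ledrappier--Young argument, which is exactly what the paper relies on by citing \cite[Section 6]{LeY85}: absolute continuity of $\mu_{\eta(x)}$, the transformation rule for conditionals under $T=T^1$, summability of log-Jacobians along backward orbits, and Pesin's formula for SRB measures plus the equality case of Jensen to identify $\mu_{\eta(x)}$ with the measure of density proportional to $\Delta(x,\cdot)$, so your proof matches the paper's. One correction: telescoping your (correct) one-step rule gives $\frac{g(y)}{g(x)}=\prod_{k=1}^{n}\frac{\Jac(D_{T^{-k}x}T|_{E^u})}{\Jac(D_{T^{-k}y}T|_{T_{T^{-k}y}W^u})}\cdot\frac{g(T^{-n}y)}{g(T^{-n}x)}$, i.e.\ one-step Jacobians at $T^{-k}x$ (equal to $\Jac(D_yT^{-n}|_{T_yW^u})/\Jac(D_xT^{-n}|_{E^u})$), not the product of cumulative ratios $\Jac(D_xT^{-k})/\Jac(D_yT^{-k})$ you copied from the statement, whose $k$-th factor tends to the reciprocal of the true limit (generally $\neq 1$) so that product does not converge; your H\"older/contraction argument proves convergence of the one-step product, which is the intended $\Delta(x,y)$.
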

The same result applies to $\mu_{\xi(x)}$, since $\xi(x)$ is an at most countable union of sets $\eta(y)$, $y\in \xi(x)$, each having positive $\mu_{\xi(x)}$-measure.
\begin{Corollary}\label{Cor:SRb}
	Assume that $\mu$ is an ergodic hyperbolic SRB measure, and let $\xi$ be the partition defined above.
	Then, for $\mu$-amlost every $x\in S$, the measure $\mu_{\xi(x)}$ is absolutely continuous with respect to ${\rm Leb}_{W^u(x)}$ and its density function $\rho_{\xi}:~\eta(x)\to \mathbb (0,+\infty)$ can be written as
	\begin{equation*}
		\rho_{\xi}(y):= \frac{{\rm d}\mu_{\xi(x)}}{{\rm d}\Leb_{W^u(x)}}(y)=\frac{\Delta(x,y)}{\int_{\xi(x)} \Delta(x,y)~{\rm d}\Leb_{W^u(x)}(y)},~~\forall y\in \xi(x),
	\end{equation*}
\end{Corollary}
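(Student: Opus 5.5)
The plan is to deduce Corollary~\ref{Cor:SRb} from Theorem~\ref{Thm:SRB} by comparing the two disintegrations. The link is that $\eta$ refines $\hat\xi$ on $S$. Indeed $\eta=\bigvee_{n\ge0}T^n\hat\xi$ contains the term $n=0$, so for $\mu$-a.e.\ $x\in S$ we have $\eta(x)\subset\xi(x)$. Hence $\xi(x)$ is the union of the $\eta$-atoms it meets.

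First we show that only countably many atoms matter. Each $\eta(y)$ contains a relative neighborhood of $y$ in $W^u(y)$, and $\xi(x)$ is a piece of one leaf $W^u_{\rm loc}(x_y)\cap B(w,r)$. Each $\eta(y)$ has positive leaf-Lebesgue measure, while $\xi(x)$ has finite leaf-Lebesgue measure, so only countably many atoms can carry positive $\mu_{\xi(x)}$-mass. By transitivity of conditional measures, for $\mu$-a.e.\ $x$,
$$\mu_{\xi(x)}=\sum_{j}\mu_{\xi(x)}(\eta(y_j))\,\mu_{\eta(y_j)},$$
and the remaining union of atoms has $\mu_{\xi(x)}$-measure zero.

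To justify this decomposition I will check two facts. The restriction of $\eta$ to $S$ refines $\xi$. The conditional measures of $\mu|_S$ with respect to $\eta|_S$ agree a.e.\ with $\mu_{\eta(\cdot)}$, since $S$ is a union of $\eta$-atoms mod $\mu$. Then I disintegrate $\mu_{\xi(x)}$ along the countable partition $\eta|_{\xi(x)}$. This is the standard Rokhlin compatibility argument, and it is the only step needing care.

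Next I apply Theorem~\ref{Thm:SRB}. Each $\mu_{\eta(y_j)}$ is absolutely continuous with respect to ${\rm Leb}_{W^u(x)}$, with density $c_j\,\Delta(y_j,\cdot)$, where $c_j$ is a normalizing constant. So $\mu_{\xi(x)}\ll{\rm Leb}_{W^u(x)}$.

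It remains to identify the density. The cocycle identity $\Delta(x,z)=\Delta(x,y)\Delta(y,z)$ for $y,z$ on the same unstable leaf follows directly from the product formula. Hence $\Delta(y_j,\cdot)=\Delta(x,\cdot)/\Delta(x,y_j)$, and the density on each piece $\eta(y_j)$ is $a_j\Delta(x,\cdot)$ for some constant $a_j>0$.

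The main obstacle is showing that all the $a_j$ coincide, i.e.\ that the weights $\mu_{\xi(x)}(\eta(y_j))$ are proportional to $\int_{\eta(y_j)}\Delta(x,\cdot)\,d{\rm Leb}$. For this I will invoke the characterization in \cite[Section 6]{LeY85}. The SRB property is equivalent to the density formula holding for every measurable partition subordinate to $W^u$, and $\xi$ is such a partition on $S$. Alternatively, the Ledrappier–Young argument applies verbatim: compare $\xi$ with $\eta_n=\bigvee_{k\le n}T^k\hat\xi$. The densities of the refinements $T^{n}\eta$ are given by the same $\Delta$-expression, and letting $n\to\infty$ shows the ratio between pieces is fixed by $\Delta$.

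With all $a_j$ equal, the density on $\xi(x)$ is $a\,\Delta(x,\cdot)$ for a single constant $a$. Normalizing to a probability measure gives $a=\bigl(\int_{\xi(x)}\Delta(x,y)\,d{\rm Leb}_{W^u(x)}(y)\bigr)^{-1}$, which is the claimed formula.
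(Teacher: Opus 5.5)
Your proposal is correct, and on the key point it is more careful than the paper. The paper's entire justification is the observation you start from: up to a $\mu_{\xi(x)}$-null set, $\xi(x)$ is a countable union of $\eta$-atoms of positive mass. As you note, this together with Theorem~\ref{Thm:SRB} and the cocycle identity only gives a density $a_j\,\Delta(x,\cdot)$ on each $\eta(y_j)$, with $a_j=\mu_{\xi(x)}(\eta(y_j))/\int_{\eta(y_j)}\Delta(x,\cdot)\,d\Leb_{W^u(x)}$. Nothing in the $\eta$-conditionals forces these constants to agree, and the paper's one-line argument does not address this.

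Citing the Ledrappier--Young result that the $\Delta$-formula holds for every measurable partition subordinate to $W^u$ does close the gap. To apply it, extend $\xi$ to a partition of $M$ by using $\eta$ on $M\setminus S$, which is a union of $\eta$-atoms. But then the corollary is an immediate special case, and your decomposition step becomes redundant.

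Your self-contained alternative is garbled as written. For $n\ge 0$, $T^n\eta$ is coarser than $\eta$, not a refinement, and the finite joins $\bigvee_{k\le n}T^k\hat\xi$ play no role. A working version runs as follows.

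\begin{itemize}
\item By invariance, $\mu_{T^n\eta(x)}=(T^n)_*\mu_{\eta(T^{-n}x)}$. Read $\Delta(x,y)$ as $\lim_{m}\Jac(D_xT^{-m}|_{E^u(x)})/\Jac(D_yT^{-m}|_{T_yW^u(x)})$. The chain rule then gives $\Delta(T^{-n}x,T^{-n}y)\,\Jac(D_yT^{-n}|_{T_yW^u(x)})=\Delta(x,y)\,\Jac(D_xT^{-n}|_{E^u(x)})$. Hence $\mu_{T^n\eta(x)}$ has density proportional to $\Delta(x,\cdot)$.
\item Both $\xi$ and $T^n\eta$ are coarser than $\eta$. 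So each splits the other's atom, up to a null set, into countably many pieces of positive mass. By transitivity, the normalized restrictions of $\mu_{\xi(x)}$ and of $\mu_{T^n\eta(x)}$ to $\xi(x)\cap T^n\eta(x)$ both equal the $\mu$-conditional on the atom of $\xi\vee T^n\eta$.
\item Since $T^n\eta(x)$ increases to $W^u(x)$, any two atoms $\eta(y_j),\eta(y_k)\subset\xi(x)$ eventually lie in one such set. This forces $a_j=a_k$.
\end{itemize}
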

Since $W^u_{\rm loc}$ has the bounded distortion property on a given Pesin set, there exists a constant $D(\ell)>0$ such that for every $x\in \PES_{\ell}$,
\begin{equation}\label{e.bounded-distortion}
	e^{-D(\ell)d(y,z)^{\beta}}\leq \lim_{n \to \infty} \prod_{k=1}^{n} \frac{{\rm Jac}(D_xT^{-k}|_{T_{z}W^u_{\loc}(x)})}{{\rm Jac}(D_yT^{-k}|_{T_{y}W^u_{\loc}(x)})}\le e^{D(\ell)d(y,z)^{\beta}},~~\forall y,z\in W^u_{\loc}(x).
\end{equation}
Let $\Leb_{\xi(x)}(\cdot):=\Leb_{W^u(x)}(\xi(x)\cap \cdot )/\Leb_{W^u(x)}(\xi(x))$. We have the following corollary:
\begin{Corollary}\label{Cor:SRB-U}
Under the notation of Corollary~\ref{Cor:SRb}, for every $\varepsilon>0$, by choosing $r:=r(\varepsilon,\ell)$ sufficiently small, we have for $\mu$-almost every $x\in S$ that
	\begin{equation*}
		1-\varepsilon \leq  \frac{{\rm d}\mu_{\xi(x)}}{{\rm d}\Leb_{\xi(x)}}(y)\leq 1+\varepsilon,~~\forall y\in \xi(x).
	\end{equation*}
\end{Corollary}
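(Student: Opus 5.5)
The plan is to combine the explicit density formula of Corollary~\ref{Cor:SRb} with the bounded distortion estimate~\eqref{e.bounded-distortion}. Writing $\Leb:=\Leb_{W^u(x)}$, for $\mu$-almost every $x\in S$ and all $y\in\xi(x)$ we have
\[
\frac{{\rm d}\mu_{\xi(x)}}{{\rm d}\Leb_{\xi(x)}}(y)=\rho_\xi(y)\,\Leb(\xi(x))=\frac{\Delta(x,y)\,\Leb(\xi(x))}{\int_{\xi(x)}\Delta(x,z)\,{\rm d}\Leb(z)}
=\left(\frac{1}{\Leb(\xi(x))}\int_{\xi(x)}\frac{\Delta(x,z)}{\Delta(x,y)}\,{\rm d}\Leb(z)\right)^{-1}.
\]
So it suffices to show that $\Delta(x,z)/\Delta(x,y)\in[e^{-c},e^{c}]$ uniformly for $y,z\in\xi(x)$, with $c$ small. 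The density is then an inverse of an average of such ratios, and therefore also lies in $[e^{-c},e^{c}]$.

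To get this, write $\xi(x)=W^u_{\loc}(x_y)\cap B(w,r)$ with $x_y\in E\subset\PES_\ell$. The quotient $\Delta(x,z)/\Delta(x,y)$ is the limit of the telescoped products
\[
\prod_{k=1}^{n}\frac{{\rm Jac}(D_yT^{-k}|_{T_yW^u})}{{\rm Jac}(D_zT^{-k}|_{T_zW^u})}.
\]
The factors at $x$ cancel, so the existence of the limits defining $\Delta(x,\cdot)$ gives the existence of this limit. Since $\xi(x)$ is a single local unstable leaf through a point of $\PES_\ell$, estimate~\eqref{e.bounded-distortion} applies with base point $x_y$. It gives $|\log\Delta(x,z)-\log\Delta(x,y)|\le D(\ell)d(y,z)^\beta$. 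As $\xi(x)\subset B(w,r)$, we have $d(y,z)\le 2r$, hence $c=D(\ell)(2r)^\beta$.

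The remaining step is to choose $r=r(\varepsilon,\ell)$ with $e^{D(\ell)(2r)^\beta}\le 1+\varepsilon$ and $e^{-D(\ell)(2r)^\beta}\ge 1-\varepsilon$. Since we must also keep $r\ll\delta$, and $r$ must be one of the admissible values from \cite[Proposition 3.1]{LeS82}, I note that there are uncountably many such $r$ in every small interval, so these requirements are compatible.

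The main point needing care is that~\eqref{e.bounded-distortion} is stated along a local unstable manifold of a point of $\PES_\ell$, while $x$ itself need not lie in $\PES_\ell$. This is handled by the cancellation above. Once $\Delta(x,\cdot)$ is normalized, only ratios between points of the same leaf $W^u_{\loc}(x_y)$ enter, and all the estimates can be taken with the base point $x_y\in E$. A further point is that $\xi(x)$ may have positive $\mu_{\xi(x)}$-measure only on a subset. Corollary~\ref{Cor:SRb}, however, gives the density on all of $\xi(x)$, so the pointwise bound holds for every $y\in\xi(x)$.
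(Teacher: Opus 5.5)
Your proposal is correct and follows the paper's route. The paper simply states that the corollary follows from the density formula of Corollary~\ref{Cor:SRb} together with the bounded distortion estimate~\eqref{e.bounded-distortion}, using $\xi(x)\subset B(w,r)$. You make explicit what the paper leaves implicit: the normalization, namely writing the density as the inverse of an average of the ratios $\Delta(x,z)/\Delta(x,y)$; the cancellation of the factors at $x$, which lets the estimate be applied at the base point $x_y\in\PES_\ell$; and the choice of $r$ making $D(\ell)(2r)^\beta$ small.
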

Note that $\xi(x)\subset W^u_{\rm loc}(x)\cap B(w,r)$. 
The proof of the corollary follows directly from \eqref{e.bounded-distortion} and Corollary~\ref{Cor:SRb}.
For the local center unstable stack 
$$S^{cu}:=\bigcup_{x\in E}W^{cu}_{\loc}(x) \cap B(w,r).$$
we define a measurable partition $\xi^{cu}$ of $S^{cu}$ by $\xi^{cu}(y):= W^{cu}_{\loc}(x_y) \cap B(w,r\alpha)$, where $x_y\in E$ is such that $y\in W^{cu}_{\rm loc}(x_y)$.
The restricted measure $\mu|_{S^{cu}}(\cdot):=\mu(S^{cu}\cap \cdot)$ admits a disintegration $\mu|_{S^{cu}}:=\int_{S^{cu}} \mu_{\xi^{cu}(x)}~{\rm d}\mu(x)$. 

Assume that $\mu$ is an ergodic hyperbolic SRB measure.
Since $W^{cu}_{\loc}$ is obtained from $W^u_{\loc}$ by flowing for a small time, by decreasing  $r>0$ if necessary, we have
\begin{equation*}
	1-\varepsilon \leq  \frac{{\rm d}\mu_{\xi^{cu}(x)}}{{\rm d}\Leb_{\xi^{cu}(x)}}(y)\leq 1+\varepsilon,~~\forall y\in \xi^{cu}(x),
\end{equation*}
where $\Leb_{\xi^{cu}(x)}(\cdot):=\Leb_{W^{cu}(x)}(\xi^{cu}(x)\cap \cdot )/\Leb_{W^{cu}(x)}(\xi^{cu}(x))$.

%
%

%
%

\section{Proof of Theorem~\ref{Thm:weakly-to-K}: the K-property}\label{Sec:to-prove-K}
 

The strategy for proving Theorem~\ref{Thm:weakly-to-K} is to show that \emph{$\mu$ is $W^s$-metrically transitive}.
This notion was originally introduced by Anosov \cite{Ano67}, who proved that the geodesic flow on a compact Riemannian manifold of negative curvature has this property.
 
\begin{Definition}
Let $\mu$ be an ergodic hyperbolic measure. 
A subset $A\subset M$ is said to be {\rm $s$-saturated (with respect to $\mu$)} if there exists a full measure set $\Gamma$ such that, for $\mu$-almost every $x\in A$, one has $W^s(x)\cap\Gamma\subset A$.
An ergodic measure $\mu$ is {\rm $W^s$-metrically transitive} if $\mu(A)=0$ or $1$ for any $s$-saturated set $A$.
\end{Definition}
 
\begin{Theorem}\label{Thm:metric-transitive-K}
Assume that $\mu$ is a hyperbolic ergodic regular measure of the flow $\{T^t\}_{t\in\mathbb R}$. If $\mu$ is $W^s$-metrically transitive, then $\mu$ has the K-property.
\end{Theorem}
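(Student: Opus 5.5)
The plan is to build, for the time-one map $T=T^1$, a $\sigma$-algebra $\mathcal A$ whose elements are unions of pieces of stable manifolds, and then to check the two conditions of Definition~\ref{Def:Kp}. I would use the stable analogue of the Ledrappier--Strelcyn construction recalled above: a measurable partition $\eta^s$ subordinated to $W^s$. It satisfies $T\eta^s\ge\eta^s$, each atom $\eta^s(x)$ is contained in $W^s(x)$ and contains a neighbourhood of $x$ in $W^s(x)$, $\bigcup_{n>0}T^{-n}\eta^s(T^n x)=W^s(x)$ for $\mu$-a.e.\ $x$, and $\bigvee_{n\ge0}T^{-n}\eta^s=\mathcal B$. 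The construction only involves the time-one map on Pesin blocks, so the fixed points of the flow play no role here; it has measure zero by regularity. Let $\mathcal A$ be the $\sigma$-algebra of $\eta^s$-measurable sets. Then $T^{-1}\mathcal A\subset\mathcal A$ (after fixing the orientation convention in Definition~\ref{Def:Kp}), and the generating property $\bigvee_n T^n\mathcal A=\mathcal B$ follows from the generating property of $\eta^s$. That property holds because the atoms of $T^{-n}\eta^s$ have diameter shrinking to $0$, being pullbacks of stable pieces under $T^{n}$, and Theorem~\ref{Thm:Pesin-manifold} gives exponential contraction along $W^s$ on Pesin blocks.

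The main step is to show that ${\rm Tail}(\mathcal A)$ is trivial. Take $A\in\bigcap_n T^{-n}\mathcal A$. Then $A$ is a union of atoms of $T^{-n}\eta^s$ for every $n$, up to a null set. There is a single full-measure set $\Gamma$, the intersection of the countably many full-measure sets on which each $T^{-n}\eta^s$ is exactly saturated, with the following property: for $x\in A\cap\Gamma$, every $n$ gives $T^{-n}\eta^s(T^nx)\cap\Gamma\subset A$. Taking the union over $n$ and using $\bigcup_n T^{-n}\eta^s(T^nx)=W^s(x)$, we get $W^s(x)\cap\Gamma\subset A$. So $A$ is $s$-saturated, and the hypothesis that $\mu$ is $W^s$-metrically transitive gives $\mu(A)\in\{0,1\}$.

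I expect the main obstacle to be the measure-theoretic bookkeeping in that saturation argument. The difficulty is to make one full-measure set $\Gamma$ work uniformly for all $n$, and to ensure that a tail set modified on a null set is still $s$-saturated in the sense of the definition. The approach I would try is to replace $A$ by the set of $\mu$-density points of $A$ within the atoms, that is, the $x$ with $\mu_{\eta^s(x)}(A)=1$, and to intersect over $n$. A secondary point needs checking: the paper's definitions of $W^s(x)$ and of the K-property are stated for the flow, so one has to confirm that $W^s$ for the time-one map coincides with the strong stable manifold $W^s$ (not $W^{cs}$) defined above. This holds because the $E^c$ direction is neutral, and the Pesin blocks are defined via $T$ and integer times.
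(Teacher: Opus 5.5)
Your proposal is correct and is essentially the paper's proof. In both, $\mathcal A$ is generated by the stable Ledrappier--Strelcyn partition $\eta^s$, $T^{-1}\mathcal A\subset\mathcal A$ follows from $\eta^s\le T\eta^s$, and tail-triviality follows because tail sets are $s$-saturated. The paper cites $\bigcap_n\mathcal B_{T^{-n}\eta^s}=\mathcal B_{W^s}$, while you prove the needed inclusion directly. Your choice of $\Gamma$ already settles the bookkeeping you were worried about, so the density-point device is unnecessary: take $A_n$ exactly $T^{-n}\eta^s$-saturated with $\mu(A\triangle A_n)=0$, and intersect the sets $M\setminus(A\triangle A_n)$ over $n$ with the full-measure set where $\bigcup_n T^{-n}(\eta^s(T^nx))=W^s(x)$.

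There is one sign slip in your generating argument. Since $T^{-n}\eta^s\le\eta^s$, you get $\bigvee_{n\ge0}T^{-n}\eta^s=\eta^s$, which does not generate $\mathcal B$. Also, the atoms $T^{-n}(\eta^s(T^nx))$ grow to $W^s(x)$, exactly as you use for the tail, so they do not shrink. The correct statement is $\bigvee_{n\ge0}T^{n}\eta^s=\mathcal B$, which holds because the forward images $T^{n}(\eta^s(T^{-n}x))$ shrink.
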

\begin{proof}
For a partition $\xi$ whose atoms are measurable subsets of $M$, we denote by $\mathcal{B}_\xi$ the $\sigma$-algebra generated by $\xi$, i.e., the minimal $\sigma$-algebra containing all the atoms of $\xi$.
By \cite[Proposition 3.1]{LeS82}
\footnote{The measurable partition in \cite{LeS82} is constructed for $W^u$. A corresponding partition for $W^s$ is obtained by reversing the map.}, there exists a measurable partition $\eta^s$ subordinate to $W^s$ and satisfying the following properties:
\begin{itemize}
\item $\eta^s\le T\eta^s$;
\item $\bigcap_{n\in\mathbb N}\mathcal B_{T^{-n}\eta^s}=\mathcal B_{W^s}$, where $\mathcal B_{W^s}$ is the minimal $\sigma$-algebra containing all stable leaves;
\item the partitions $\{T^n\eta^s\}_{n\in \NN}$ generate $\cB_M$ modulo $\mu$.
\end{itemize}
Let $\mathcal A$ be the $\sigma$-algebra generated by $\eta^s$. The first item implies $T^{-1}\mathcal A\subset\mathcal A$. 
The second item and the assumption $W^s$-metrical transitivity imply ${\rm Tail}(\mathcal A)=\{\emptyset,M\}$ modulo $\mu$. 
The third item implies that $\bigvee_{n=0}^\infty T^n \mathcal A=\mathcal B$ modulo $\mu$. 
We thus conclude by Definition~\ref{Def:Kp}.
\end{proof}
 
In the following section, we will prove that if $\mu$ is a weakly mixing SRB measure, then $\mu$ is $W^s$-metrically transitive.
 
 \subsection{Exceptional set}
Assume that $\mu$ is an ergodic $\chi$-hyperbolic measure for some $\chi>0$, and let $0<\varepsilon_0 \ll \chi$. 
Recall the definition of the Pesin block ${\rm PES}_\ell^{\chi,\varepsilon_0}$ from Section~\ref{Sec:Pesin}, as well as the definition of the local unstable manifolds given in Theorem~\ref{Thm:Pesin-manifold}. 
For simplicity, we denote ${\rm PES}_\ell := {\rm PES}_\ell^{\chi,\varepsilon_0}$.
 
\begin{Proposition}\label{Pro:exceptional-set}
Let $A$ be a measurable set with $\mu(A)>0$. 
For any $\varepsilon>0$ and any $\ell\in\mathbb{N}$, there exists $t_0>0$ such that for all $t\ge t_0$, there is a set $M_\varepsilon^t = M_\varepsilon^t(A,\ell)$ with $\mu(M_\varepsilon^t)<\varepsilon$ such that for any $w,w'\in M\setminus M_\varepsilon^t$ lying on the same local unstable manifold of some point in ${\rm PES}_\ell$, either both $w,w'\in T^t(A)$ or both $w,w'\notin T^t(A)$.
\end{Proposition}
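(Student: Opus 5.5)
This is the classical Anosov/Pesin exceptional-set argument. The idea is to approximate $A$ by a set that is a union of small balls, and then use the contraction of unstable manifolds under backward iteration. If $w,w'$ lie on a common local unstable manifold, then $T^{-t}(w)$ and $T^{-t}(w')$ are exponentially close. So they lie in the same small ball of the approximating set unless $T^{-t}(w)$ is near the boundary of that set or lies in the symmetric difference between $A$ and its approximation. Both of these events have small measure, and the measure is preserved by $T^{-t}$.

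\textbf{Step 1: approximation.} By regularity of $\mu$, choose a set $B$ that is a finite union of open balls with $\mu(A\triangle B)<\varepsilon/3$. Since only countably many spheres around fixed centers can carry positive measure, perturb the radii so that $\mu(\partial B)=0$. Then there is $\rho>0$ with $\mu(U_\rho(\partial B))<\varepsilon/3$, where $U_\rho$ denotes the $\rho$-neighbourhood.

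\textbf{Step 2: uniform contraction.} By Theorem~\ref{Thm:Pesin-manifold}, for $x\in{\rm PES}_\ell$ and $w,w'\in W^u_{\loc}(x)$ we have
$$d(T^{-t}w,T^{-t}w')\le 2\ell e^{-t(\chi-\varepsilon_0)}\cdot 2\delta(\ell)$$
for all $t\ge 0$. Choose $t_0$ so that the right-hand side is less than $\rho$ for $t\ge t_0$; this choice is uniform in $x\in{\rm PES}_\ell$.

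\textbf{Step 3: the exceptional set.} Define
$$M_\varepsilon^t:=T^t\big(U_\rho(\partial B)\cup(A\triangle B)\big).$$
By invariance of $\mu$, $\mu(M_\varepsilon^t)<2\varepsilon/3<\varepsilon$.

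Now take $w,w'\notin M_\varepsilon^t$ on the same local unstable manifold, and set $y=T^{-t}w$, $y'=T^{-t}w'$. Then $d(y,y')<\rho$ and $y\notin U_\rho(\partial B)$. Hence the segment (or ball) of radius $\rho$ about $y$ misses $\partial B$, so $y$ and $y'$ are both in $B$ or both outside $B$. The ball $B(y,\rho)$ is connected, so it lies entirely in the interior of $B$ or entirely in the complement of $\overline{B}$; this is the precise form of the claim. Since neither $y$ nor $y'$ lies in $A\triangle B$, their membership in $A$ agrees with their membership in $B$. Therefore both $w,w'$ are in $T^t(A)$, or both are not.

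\textbf{Main obstacle.} The only delicate points are making the approximation with a $\mu$-null boundary, and working on small scales. The latter matters because on a general manifold balls of small radius are connected. The singularities play no role, since only the uniform backward contraction on ${\rm PES}_\ell$ is used.
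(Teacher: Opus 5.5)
Your argument is correct. It has the same skeleton as the paper's proof: regularize $A$, use the uniform backward contraction along $W^u_{\rm loc}(x)$ for $x\in{\rm PES}_\ell$, and transport the bad set by $T^t$ using invariance of $\mu$. The difference is the regularizing device.

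The paper writes $\chi_A=f+h$ with $f$ continuous and $\int h^2\,{\rm d}\mu<\varepsilon^3$. It takes $M_\varepsilon^t=T^t\{|h|>\varepsilon\}$, which has measure $<\varepsilon$ by Chebyshev, and chooses $t_0$ by uniform continuity of $f$ at the contracted scale. Then $|\chi_A(T^{-t}w)-\chi_A(T^{-t}w')|\le\varepsilon+2\varepsilon<1$, and since this difference lies in $\{0,1\}$ it must vanish.

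That route needs only density of continuous functions in $L^2(\mu)$. It avoids building an approximant with $\mu$-null boundary, the collar $U_\rho(\partial B)$, and any topological input about balls. Your route is more geometric and produces an explicit exceptional set $T^t\big(U_\rho(\partial B)\cup(A\triangle B)\big)$. The cost is the regularity-plus-radius-perturbation step and a connectivity argument.

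On connectivity, your small-scale caveat is unnecessary. In a Riemannian manifold every metric ball is path-connected: if $d(y,y')<\rho$, then $y'$ is joined to $y$ by a curve of length $<\rho$, and that curve stays inside $B(y,\rho)$.

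One cosmetic point in Step 2: $W^u_{\rm loc}(x)$ is a graph of Lipschitz constant $\le 1/10$ over a $\delta$-ball, so its diameter may slightly exceed $2\delta(\ell)$. Any uniform bound works there, even ${\rm diam}\,M$.
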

\begin{proof}
For a measurable set $B$, let $\chi_B$ denote its indicator function. 
It is clear that $\chi_{T^{t}B}(x)=\chi_B(T^{-t}(x))$ for all $x\in M$ and all $t\in \RR$.
We will construct a set $M_\varepsilon^t$ with $\mu(M_\varepsilon^t)<\varepsilon$ and show that
\begin{equation}\label{e.indicator}
\sup \{|\chi_{T^{t}A}(w)-\chi_{T^{t}A}(w')|:~w,w'\in W^u_{\rm loc}(z)\setminus M_\varepsilon^t,~z\in {\rm PES}_{\ell} \}=0.
\end{equation}
Note that equation~\eqref{e.indicator} is equivalent to the conclusion of the proposition.

For a measurable set $A$ with $\mu(A)>0$ and any $\varepsilon>0$, we choose a continuous function $f$ and a measurable function $h\in L^2(\mu)$ such that $\chi_A = f + h$ with $\int h^2{\rm d}\mu< \varepsilon^3$. 
By the continuity of $f$, we can choose $t_0>0$ sufficiently large such that 
$$\sup\big\{|f(x)-f(y)|:x,y\in M,~d(x,y)\leq \ell \cdot {\rm e}^{-t_0\chi}\big\}<\varepsilon.$$
For each $t>t_0$, we define the set $M_{\varepsilon}^{t}:=\{x:|h(T^{-t}(x))|>\varepsilon \}$. 
Note that $T^{-t}(M_\varepsilon^t)=M_\varepsilon^0$, where $M_{\varepsilon}^{0}:=\{x:|h(x)|>\varepsilon \}$.
Since $\mu$ is invariant, one has $\mu(M_{\varepsilon}^{t})=\mu(M_{\varepsilon}^{0})$. 
By the choice of $h$, we have that 
$$\varepsilon^3>\int h^2~{\rm d}\mu=\int_{M_\varepsilon^0} h^2~{\rm d}\mu+\int_{M\setminus M_\varepsilon^0} h^2~{\rm d}\mu\ge \varepsilon^2\mu(M_\varepsilon^0).$$ 
Therefore, we have $\mu(M_{\varepsilon}^{t})=\mu(M_{\varepsilon}^{0})<\varepsilon$.

Now, if $w,w'\in W^u_{\rm loc}(z)\setminus M_\varepsilon^t$ for some $z\in {\rm PES}_{\ell}$,  by Theorem~\ref{Thm:Pesin-manifold} one has that $$d(T^{-t}(w),T^{-t}(w'))\leq 2\ell \cdot {\rm e}^{-t\chi}d(w,w'),~\forall t>0.$$ 
Thus, for each $t>t_0$ we have that 
\begin{align*}
|\chi_{T^{t}A}(w)-\chi_{T^{t}A}(w')|&=|\chi_{A}(T^{-t}w)-\chi_{A}(T^{-t}w')|\\
&\le |f(T^{-t}w)-f(T^{-t}w')|+  |h(T^{-t}w)-h(T^{-t}w')|\\
&\leq \varepsilon+2\varepsilon<1.
\end{align*}
Since $|\chi_{T^{t}A}(w)-\chi_{T^{t}A}(w')|=0$ or $1$, it follows that equation~\eqref{e.indicator} holds.
\end{proof}
 
\subsection{$s$-saturated set}
\begin{Proposition}\label{Pro:s-saturated}
Assume that $\mu$ is an ergodic $\chi$-hyperbolic measure for some $\chi>0$.
For any $s$-saturated set $A$ with $\mu(A)\in (0,1)$, there exist an $s$-saturated set $B$ and a time $\tau>0$ such that $\mu(T^{[-\tau,\tau]}(B))\in (0,1)$.
\end{Proposition}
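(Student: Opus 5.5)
The plan is to avoid working with $A$ directly, since $T^{[-\tau,\tau]}(A)$ is an uncountable union of translates and its measure need not be close to $\mu(A)$. Instead I would replace $A$ by a ``flow-averaged'' version of itself, which remains $s$-saturated and whose small flow-thickenings are controlled by averages. For $\tau>0$ set
$$g_\tau(x):=\frac{1}{2\tau}\int_{-\tau}^{\tau}\chi_A(T^t(x))\,{\rm d}t,\qquad B_\tau:=\{x:\ g_\tau(x)>1/2\}.$$
Each $g_\tau$ is measurable by Fubini, since $(t,x)\mapsto T^t(x)$ is continuous. By continuity of the flow on $L^1(\mu)$, namely $\|\chi_A\circ T^t-\chi_A\|_{L^1}\to0$ as $t\to0$, we get $g_\tau\to\chi_A$ in $L^1(\mu)$ as $\tau\to0$.

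\textbf{Step 1: $B_\tau$ is $s$-saturated.} The key fact is $T^t(W^s(x))=W^s(T^t(x))$, which is immediate from the definition of global stable manifolds via $W^s_{\rm loc}$ along the orbit. Let $\Gamma$ be the full-measure set from the saturation of $A$. Set $\Gamma'=\{x:\ T^t(x)\in\Gamma \text{ for a.e. } t\}$, which has full measure by Fubini and invariance, and similarly let $G$ be the set of $x$ for which $T^t(x)$ is a ``good'' point of $A$ (in the sense of the definition) for a.e. $t$ with $T^t(x)\in A$. Now take $x\in B_\tau\cap G$ and $y\in W^s(x)\cap\Gamma'$. For a.e. $t$ with $T^t(x)\in A$ we have $T^t(y)\in W^s(T^t(x))\cap\Gamma\subset A$. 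Hence $g_\tau(y)\ge g_\tau(x)>1/2$, so $y\in B_\tau$. This is the step where I expect the most care to be needed: the ``almost every'' quantifiers in the definition of $s$-saturation have to be threaded through Fubini, and one must make sure the exceptional sets are chosen independently of $t$.

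\textbf{Step 2: control of the thickening.} If $z=T^s(x)$ with $|s|\le\tau$ and $x\in B_\tau$, then $[-\tau,\tau]+s\subset[-2\tau,2\tau]$. So the set $\{t\in[-2\tau,2\tau]:\ T^t(z)\in A\}$ has Lebesgue measure $>\tau$, i.e. $g_{2\tau}(z)>1/4$. Therefore
$$B_\tau\subset T^{[-\tau,\tau]}(B_\tau)\subset\{g_{2\tau}>1/4\}.$$
If measurability of the middle set is an issue, the inclusions still give the inner and outer measure bounds, and the middle set is in any case analytic, hence $\mu$-measurable.

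\textbf{Step 3: choosing $\tau$.} Since $g_\tau\to\chi_A$ in $L^1$, we have $\mu(\{g_\tau>1/2\})\to\mu(A)>0$ and $\mu(\{g_{2\tau}>1/4\})\to\mu(A)<1$ as $\tau\to0$. This is by Chebyshev applied to $|g_\tau-\chi_A|$ on the sets where $\chi_A=1$ and $\chi_A=0$ respectively. Choosing $\tau$ small enough, $B:=B_\tau$ is $s$-saturated with $0<\mu(B)\le\mu(T^{[-\tau,\tau]}(B))<1$, which is the claim.
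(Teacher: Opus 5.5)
Your argument is correct and is essentially the paper's proof: the paper also takes the flow average $m^A_\tau=g_\tau$, uses that its superlevel sets are $s$-saturated, and controls the $\tau$-thickening by the same window-shift count. The only difference is the upper bound. The paper exhibits the positive-measure set $\{x\in M\setminus A:\ m^{M\setminus A}_\tau(x)>9/10\}$ disjoint from $T^{[-\tau,\tau]}(B)$, whereas you place the thickening inside $\{g_{2\tau}>1/4\}$ and use $g_\tau\to\chi_A$ in $L^1$, which also justifies the convergence step more cleanly than the paper's appeal to dominated convergence.
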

\begin{Remark}
Since $B$ is $s$-saturated, it follows that $T^{[-\tau,\tau]}(B)$ is also $s$-saturated.
\end{Remark}
\begin{proof}[Proof of Proposition~\ref{Pro:s-saturated}]
Let $D=M\setminus A$, then $D$ is also $s$-saturated and $\mu(D)\in (0,1)$.  Define 
$$m_{\tau}^{A}(x):=\frac{1}{2\tau} {\rm Leb} \big(\{t \in [-\tau,\tau]:T^{t}(x)\in A \} \big),~~~
  m_{\tau}^{D}(x):=\frac{1}{2\tau} {\rm Leb} \big(\{t \in [-\tau,\tau]:T^{t}(x)\in D \} \big).$$
Note that the function $(x,t)\mapsto \chi_{ \{(x,t):~|t|\leq \tau,~T^t(x)\in A \}}$ is measurable\footnote{Consider the continuous map $F:~(x,t)\mapsto (T^t(x),t)$. It is clear that $\{(x,t):~|t|\leq \tau,~T^t(x)\in A \}=F^{-1}(A\times[-\tau,\tau])$. 
Thus, this set is measurable.}. 
By Fubini's theorem, 
$$m_{\tau}^{A}(x)=\frac{1}{2\tau}\int_{-\tau}^{\tau} \chi_{\{ t:~|t|\leq \tau,~T^t(x)\in A \}} {\rm d}t$$ 
is a measurable function on $M$. 
Clearly, $0\leq m_{\tau}^{A}(x)\le 1$ for all $x$.
\begin{Claim}
When $\tau \to 0$, we have $m_{\tau}^{A}(x)\to 1$ for $\mu$-almost every $x\in A$, and $m_{\tau}^{D}(x)\to 1$ for $\mu$-almost every $x\in D$.
\end{Claim}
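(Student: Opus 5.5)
This is a Lebesgue differentiation statement along flow orbits. I would pass to a flow box and apply the one-dimensional Lebesgue density theorem in the time variable, using Fubini. Since $D=M\setminus A$, it suffices to treat $A$; the statement for $D$ is identical with $A$ replaced by $D$. Note also that $s$-saturation plays no role here; only measurability of $A$ and regularity of $\mu$ are used.

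First, for fixed $x$ consider the measurable set
$$I_x:=\{t\in\RR:~T^t(x)\in A\}.$$
Then $m_\tau^A(x)=\frac{1}{2\tau}{\rm Leb}(I_x\cap[-\tau,\tau])$. By the one-dimensional Lebesgue density theorem, for every $x$ the point $t=0$ is a density point of $I_x$ whenever $0$ lies outside an exceptional Lebesgue-null subset $N_x$ of $I_x$. Hence the set
$$G:=\{x\in A:~m^A_\tau(x)\not\to 1\}$$
satisfies $\{t:~T^t(x)\in G\}=\{t\in I_x:~t \text{ not a density point of } I_x\}$. This uses the translation invariance $I_{T^s x}=I_x-s$, so that density at $0$ for $T^s x$ equals density at $s$ for $x$. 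Consequently, for every $x\in M$,
$${\rm Leb}\{t\in[0,1]:~T^t(x)\in G\}=0.$$
Measurability of $G$ follows from the measurability of $m^A_\tau$ established before the claim, taking $\tau$ along rationals; the function $\tau\mapsto{\rm Leb}(I_x\cap[-\tau,\tau])$ is continuous, so a lim inf over rational $\tau$ suffices.

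Second, I would use invariance of $\mu$ together with Fubini on $M\times[0,1]$ applied to $F(x,t)=\chi_G(T^tx)$:
$$\mu(G)=\int_0^1\mu(T^{-t}G)\,{\rm d}t=\int_M{\rm Leb}\{t\in[0,1]:~T^tx\in G\}\,{\rm d}\mu(x)=0.$$
The joint measurability of $F$ is exactly the argument in the footnote, via the continuous map $(x,t)\mapsto(T^tx,t)$. This gives $m^A_\tau(x)\to1$ for $\mu$-a.e. $x\in A$.

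The main obstacle I expect is the bookkeeping in the first step, namely making precise that non-density points along each orbit form a null set in time. Fixed points of the flow need separate attention, since there $I_x$ is either all of $\RR$ or empty, so $m^A_\tau\equiv1$ on $A\cap{\rm Fix}(T^t)$ and there is nothing to prove. Regularity of $\mu$ therefore is not even needed. No use of Pesin theory is required for this claim.
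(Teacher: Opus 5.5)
Your proof is correct, and it takes a genuinely different route from the paper's.

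The paper never looks at individual orbits. It uses Fubini to write $\int_A m^A_\tau\,{\rm d}\mu=\frac{1}{2\tau}\int_{-\tau}^{\tau}\mu(A\cap T^{-t}A)\,{\rm d}t$, uses continuity of $t\mapsto\mu(A\cap T^{-t}A)$ at $t=0$ to get $\int_A m^A_\tau\,{\rm d}\mu\to\mu(A)$, and then appeals to $0\le m^A_\tau\le 1$. This is shorter and needs no measurability discussion of a bad set. What it actually yields, however, is $\int_A(1-m^A_\tau)\,{\rm d}\mu\to 0$, i.e.\ convergence in $L^1(\mu|_A)$ and hence in measure. The exchange of $\lim_{\tau\to 0}$ and $\int_A$ written there presupposes that the pointwise limit exists, which is exactly what is being claimed. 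The continuity of $t\mapsto\mu(A\cap T^{-t}A)$ is also better justified by strong $L^2$-continuity of $f\mapsto f\circ T^t$ (approximate $\chi_A$ by continuous functions) than by dominated convergence, since $\chi_A(T^tx)\to\chi_A(x)$ pointwise is not available for a general measurable $A$. Convergence in measure is all the paper uses afterwards, namely to choose $\tau$ with $\mu(B)>0$ and $\mu(E)>0$.

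Your argument combines three steps. The one-dimensional Lebesgue density theorem is applied on each orbit. The covariance $I_{T^sx}=I_x-s$ shows that every orbit spends zero Lebesgue time in $G$. Fubini with invariance then gives $\mu(G)=0$. Together these establish the pointwise almost-everywhere statement exactly as formulated. The extra cost is checking that $G$ is measurable, which you do correctly via continuity of $\tau\mapsto m^A_\tau(x)$ and rational $\tau$.

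Your side remarks are also right: neither $s$-saturation nor regularity of $\mu$ plays any role. Fixed points need no separate treatment either: there $I_x$ is $\mathbb R$ or empty, so they never lie in $G$, and your general argument already covers them.
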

\begin{proof}[Proof of the Claim]
By the definition of $m_{\tau}^{A}$, we have
\begin{align*}
   \int_{A} m_{\tau}^{A}(x) {\rm d}\mu(x)&=\frac{1}{2\tau} \int_{M} \int_{[-\tau,\tau]} \chi_{A}(T^{t}x) \chi_A(x)~{\rm d}t~{\rm d}\mu(x)\\ &=\frac{1}{2\tau} \int_{[-\tau,\tau]} \int_{M}  \chi_{A}(T^{t}x) \chi_A(x)~{\rm d}\mu(x)~{\rm d}t.
\end{align*}
By the dominated convergence theorem, we have 
$$\lim_{t\to 0} \int_{M}  \chi_{A}(T^{t}x) \chi_A(x)~{\rm d}\mu(x)=\int_{M}  \chi_A(x)^{2}~{\rm d}\mu(x)=\mu(A).$$
Therefore, we have 
$$\int_{A} \lim_{\tau\to 0} m_{\tau}^{A}(x) {\rm d}\mu(x)=\lim_{\tau\to 0} \int_{A} m_{\tau}^{A}(x) {\rm d}\mu(x)=\mu(A),$$
where the first equality follows from the dominated convergence theorem. 
The above limit implies the conclusion of the claim, since $0 \le m_{\tau}^{A}(x) \le 1$ for all $x\in M$ and all $\tau>0$.
Since $\mu(D)\in (0,1)$, the same conclusion holds for $D$.
\end{proof}

Since $A$ is $s$-saturated and $T^{t}(x)$ and $T^{t}(y)$ lie on the same stable leaf whenever $x$ and $y$ do, it follows that, when restricted to a full $\mu$-measure set, $m_{\tau}^{A}(x)$ is constant along stable leaves. 
A similar statement holds for $m_{\tau}^{D}(x)$.
Choose $\tau>0$ small enough such that
\begin{itemize}
\item $B:=\{x\in A: m_{\tau}^{A}(x)>9/10\}$ has positive $\mu$-measure;
\item $E:=\{x\in D: m_{\tau}^{D}(x)>9/10\}$ has positive $\mu$-measure.
\end{itemize}
The first item implies that $\mu(T^{[-\tau,\tau]}(B))>0$.

Note that $B$ and $E$ are measurable, and they are $s$-saturated since $m_{\tau}^{A}(x)$ and $m_{\tau}^{D}(x)$ are constant on stable leaves. 
We now prove that $\mu(T^{[-\tau,\tau]}(B))<1$. 
Since $\mu(E)>0$, it suffices to show that $T^{[-\tau,\tau]}(B)\cap E=\emptyset$.

If $T^{[-\tau,\tau]}(B)\cap E\neq \emptyset$, then there exists a point $x \in T^{[-\tau,\tau]}(B)\cap E$. 
We assume that there is $s \in[-\tau,\tau]$ such that $T^s(x)\in B$.
Since $x\in E$, one has 
\begin{equation}\label{e.inequa-E}
{\rm Leb} \big(\{t \in [-\tau,\tau]:T^{t}(x)\notin A \}\big) \geq \frac{9\tau}{5}.
\end{equation}
Since $T^s(x)\in B$, one has that ${\rm Leb} (\{t \in [-\tau,\tau]:T^{t+s}(x)\notin A \}) \leq \frac{\tau}{5}$. 
Thus, we have 
\begin{equation}\label{e.inequa-B}
{\rm Leb} \big(\{t \in [-\tau,\tau]:T^{t}(x)\notin A \}\big) \leq \tau+\frac{\tau}{5}\leq \frac{6\tau}{5}.
\end{equation}
Equations~\eqref{e.inequa-E} and \eqref{e.inequa-B} give a contradiction together.
Therefore, we have $T^{[-\tau,\tau]}(B)\cap E= \emptyset$.
This completes the proof of the proposition.
\end{proof}
 
\subsection{Proof of Theorem~\ref{Thm:weakly-to-K}}
Now assume that $\mu$ is a weakly mixing hyperbolic SRB measure for a $C^{1+\beta}$ flow $\{T^t\}_{t\in\mathbb{R}}$. 
To prove that $\mu$ satisfies the K-property, by Theorem~\ref{Thm:metric-transitive-K} it suffices to show that $\mu$ is $W^s$-metrically transitive.
Fix $\chi>0$ and $0<\varepsilon_0 \ll \chi$ such that $\mu$ is $\chi$-hyperbolic. 
Recall the definition of the Pesin block $\PES_{\ell}^{\chi,\varepsilon_0}$ from Section~\ref{Sec:Pesin}, and write ${\rm PES}_{\ell}:= {\rm PES}_{\ell}^{\chi,\varepsilon_0}$ for simplicity.
\begin{proof}[Proof of Theorem \ref{Thm:weakly-to-K}]
	We argue by contradiction and assume that there exists an $s$-saturated set $A$ such that $\mu(A)\in(0,1)$. 
	By Proposition~\ref{Pro:s-saturated}, there exist an $s$-saturated set $B$ with $\mu(B)>0$ and $\tau>0$ such that $\mu(C)\in(0,1)$, where $C = T^{[-\tau,\tau]}(B)$. 
	Now, we choose
	\begin{itemize}
		\item $\ell \in \mathbb{N}$ such that $\mu({\rm PES}_\ell) > \mu(C)$.
	\end{itemize}
	By Lemma~\ref{Lem:transverse-intersection}, for the given $\ell\in \NN$ there exists $L\in \NN$ satisfying the conclusion of Lemma~\ref{Lem:transverse-intersection}.
	By Theorem~\ref{Thm:Pesin-manifold}, for the larger Pesin block ${\rm PES}_L$, there exists $\delta(L)>0$ such that for every $x\in {\rm PES}_L$, the local stable and unstable manifolds $W^{s}_{\delta(L)}(x)$ and $W^{u}_{\delta(L)}(x)$ of size $\delta(L)$ are well defined.
    Take $0<\alpha\ll \min\{\tau,\delta(L)\}$ sufficiently small such that
    \begin{itemize}
    	\item for every $x,y\in \PES_L$ with $d(x,y)<\alpha$, the map $h^{cs}_{x,y}:\PES_{L}\cap W^{u}_{\delta(L)}(x) \to W^u_{\loc}(y)$ is well-defined, $\Jac(h^{cs}_{x,y})\in (1/2,2)$ and $h^{cs}_{x,y}(z)\in T^{[-\tau,\tau]}W^s_{\loc}(z)$ for any $z\in \PES_L\cap W^{u}_{\delta(L)}(x)$;
    	\item there is a finite collection of pairwise disjoint $(\alpha,u)$-rectangles $\{\Pi^u_1,\cdots,\Pi^u_{N_*}\}$ at points $\{w_1,\cdots,w_{N_*}\}$ such that $ \bigcup_{i=1}^{N_*} \Pi^u_{i}\subset {\rm PES}_{L}$, $\mu( \bigcup_{i=1}^{N_*} \Pi^u_{i} \setminus {\rm PES}_{\ell})=0$ and $\mu(\Pi^u_{i}\cap \PES_\ell)>0$ for any $ 1\le i\le N_*$.
    \end{itemize}   
	By Corollary~\ref{Pro:density-zero}, there exist $\alpha_0>0$ and a sequence of $\{t_n\}\subset\mathbb R$ such that $t_n\to +\infty$ as $n\to \infty$ and 
	$$\mu(T^{t_n}(B)\cap \Pi^u_i)>\alpha_0~~ \text{for any}~n\geq 1~\text{and any}~1\le i\le N_*.$$ 
	\begin{Proposition}\label{Prop:main-K}
		For each $1\le i\le N_*$, one has
		$$\lim_{n\to\infty} \mu(\Pi_i^u \setminus T^{t_n}(C))=0.$$
	\end{Proposition}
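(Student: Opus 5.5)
The idea is to combine the SRB property on unstable leaves with the exceptional-set control of Proposition~\ref{Pro:exceptional-set}. We show that inside each $\Pi_i^u$ the set $T^{t_n}(B)$ occupies a definite proportion of many unstable plaques. Then, since $T^{t_n}(B)$ is $s$-saturated and flowing for time in $[-\tau,\tau]$ fills the center-stable holonomy images, almost all of $\Pi_i^u$ lies in $T^{t_n}(C)$.

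Fix $i$ and $\varepsilon>0$. We apply Proposition~\ref{Pro:exceptional-set} to the set $B$, with Pesin block ${\rm PES}_L$ and a small constant $\varepsilon'\ll\varepsilon\alpha_0$. For $n$ large we get a set $M^{t_n}_{\varepsilon'}$ of measure $<\varepsilon'$ outside which $\chi_{T^{t_n}B}$ is constant along each local unstable manifold $W^u_{\rm loc}(z)$, $z\in{\rm PES}_L$.

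Now disintegrate $\mu|_{\Pi_i^u}$ along the plaques $W^u_{\rm loc}(x)\cap\Pi_i^u$, $x\in\Pi_i^u$, using the unstable stack partition $\xi$ and Corollary~\ref{Cor:SRB-U}. The conditional measures are comparable, up to a factor $1\pm\varepsilon$, to normalized Lebesgue measure on $\xi(x)$. Call a plaque good if the exceptional set has conditional measure $<\sqrt{\varepsilon'}$ on it; by Fubini, the bad plaques carry total measure $\le\sqrt{\varepsilon'}$. On a good plaque, $T^{t_n}(B)$ either contains all of the plaque outside $M^{t_n}_{\varepsilon'}$ or misses all of it. Since $\mu(T^{t_n}(B)\cap\Pi_i^u)>\alpha_0$ and $\sqrt{\varepsilon'}\ll\alpha_0$, there is at least one good plaque $\xi(x_0)$ contained, up to conditional measure $\sqrt{\varepsilon'}$, in $T^{t_n}(B)$.

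Next we propagate from $\xi(x_0)$ to the other plaques $\xi(y)$ of $\Pi_i^u$ by the holonomy $h^{cs}_{x_0,y}$. Since $\Pi_i^u$ is a $u$-rectangle, this holonomy is a bijection between the rectangle points on the two plaques. By the choice of $\alpha$ it has Jacobian in $(1/2,2)$, and $h^{cs}_{x_0,y}(z)\in T^{[-\tau,\tau]}W^s_{\rm loc}(z)$. The set $T^{t_n}(B)$ is $s$-saturated (on a full measure set $\Gamma$), so for $z\in\xi(x_0)\cap T^{t_n}(B)$ the whole stable leaf of $z$ is in $T^{t_n}(B)$ modulo $\Gamma$. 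Flowing by $[-\tau,\tau]$ then puts $h^{cs}_{x_0,y}(z)$ in $T^{t_n}(C)=T^{[-\tau,\tau]}T^{t_n}(B)$. Combining the Jacobian bound with the uniform density bounds from Corollary~\ref{Cor:SRB-U}, the proportion of $\xi(y)$ not in $T^{t_n}(C)$ is at most a constant times $\sqrt{\varepsilon'}$. Integrating over $y$ gives $\mu(\Pi_i^u\setminus T^{t_n}(C))\le K\sqrt{\varepsilon'}$ for all large $n$, which yields the limit.

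The main obstacle is the measure-theoretic bookkeeping in this holonomy step. The rectangle points in a plaque may form only a positive-measure Cantor-like set, so the density estimates are taken relative to the rectangle rather than to the full Lebesgue measure on the plaque. Moreover, $s$-saturation holds only modulo the full measure set $\Gamma$, so we must also check that the holonomy images of $\mu$-typical points remain $\mu$-typical. For the latter I would use absolute continuity (Theorem~\ref{Thm:holonomy}) together with the SRB conditionals, which makes the null set $M\setminus\Gamma$ correspond to Lebesgue-null sets on the plaques.
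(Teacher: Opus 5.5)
Your proposal is correct and follows the paper's proof. The paper's Lemma~\ref{Lem:Leb-pes} finds one unstable plaque through a point of $T^{t_n}(B)\cap\Pi_i^u$ on which $T^{t_n}(B)$ has small relative complement; it argues by contradiction where you use a Markov/Fubini selection, and your version has the small advantage of making explicit that the chosen point lies outside the exceptional set. That plaque is then transported by $h^{cs}_{x,z}$ using the $s$-saturation of $B$, the Jacobian bound in $(1/2,2)$ and the density constant $K$, and the result is integrated, exactly as you describe. One adjustment: since $\alpha$ is fixed before $\varepsilon$, you should use a fixed density constant $K$ rather than the $1\pm\varepsilon$ comparability of Corollary~\ref{Cor:SRB-U}, and a fixed $K$ is all your argument actually needs.
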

	We first assume that this proposition holds and give the proof of Theorem \ref{Thm:weakly-to-K}, we will then prove the proposition.
	Note that $\mu(C)=\mu(T^{t_n}(C))$. 
	Then, we have 
	$$\mu({\rm PES}_\ell\setminus T^{t_n}(C))\le \mu\Big(\bigcup_{i=1}^{N_*} \Pi^u_i\cap {\rm PES}_\ell\setminus T^{t_n}(C)\Big)\le\sum_{i=1}^N\mu(\Pi^u_i\setminus T^{t_n}(C))\to 0.$$
	This implies that $\mu(C)=\lim\limits_{n\to \infty}\mu(T^{t_n}(C))\ge \mu({\rm PES}_\ell)$ which contradicts $\mu({\rm PES}_\ell)>\mu(C)$.
\end{proof}

\begin{proof}[Proof of Proposition \ref{Prop:main-K}]
For each $1\leq i \leq N_*$, we define a partition $\xi_i^u$ of the positive $\mu$-measure subset
$$S_i:=\bigcup_{x\in \Pi_i^u} W_{\delta(L)}^u(x)\cap B(w_i,\alpha\varepsilon(L)^{-1}).$$ 
For each $y\in S_i$, choose $x_y\in \Pi_i^u$ and define $\xi_i^u(y):=W_{\delta(L)}^u(x_y)\cap B(w_i,\alpha\varepsilon(L)^{-1})$.
It is clear that when $\alpha$ is small enough, $\xi_i^u(y)$ does not depend on the choice of $x_y$.
Since the local unstable manifolds depend continuously on $\PES_\ell$, it follows that the partition $\xi_i^u$ is a measurable partition of $S_i$ and $\Leb_{W^u(x)}(\xi_i^u(x)) / \Leb_{W^u(y)}(\xi_i^u(y))\leq 2$ for every $x,y\in \Pi_i^u$.
Let $\mu|_{S_i}(\cdot)=\mu(S_i\cap \cdot)$, and denote by $\mu|_{S_i}=\int_{S_i} \mu_{\xi_i^u(x)}~{\rm d} \mu(x)$ the canonical disintegration of of $\mu|_{S_i}$ with respect to the measurable partition $\xi_i^u$.

Since $\mu$ is an SRB measure, $\mu_{\xi_i^u(x)}$ is absolutely continuous with respect to ${\rm Leb}_{W^u(x)}$ for every $x\in \Pi_i^u$.
Moreover, by Theorem~\ref{Thm:SRB}, there exists a constant $K>0$ such that for every $x\in \Pi_i^u$,
$$K^{-1}\le \frac{{\rm d}\mu_{\xi_i^u(x)}(y)}{{\rm d}{\rm Leb}_{\xi_i^u(x)}(y)}\le K,~~~\forall y\in \xi_i^u(x),$$
where ${\rm Leb}_{\xi_i^u(x)}(\cdot)={\rm Leb}_{W^u(x)}(\xi_i^u(x) \cap \cdot) / {\rm Leb}_{W^u(x)}(\xi_i^u(x))$.
 
\begin{Lemma}\label{Lem:Leb-pes}
For every $\varepsilon>0$, for all sufficiently large $t_n$, and for each $1\le i\le N_{*}$, there exists a point $x\in T^{t_n}(B)\cap \Pi_i^u$ such that
\begin{equation}\label{e.one-point-good-rectangle}
{\rm Leb}_{\xi_i^u(x)} \big( \Pi_i^u \setminus T^{t_n}(B) \big)< \varepsilon.
\end{equation}
\end{Lemma}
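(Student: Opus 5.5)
The plan is to combine Proposition~\ref{Pro:exceptional-set}, applied to the set $B$, with the lower bound $\mu(T^{t_n}(B)\cap\Pi_i^u)>\alpha_0$ and the SRB density bounds on the atoms $\xi_i^u$. Fix $\varepsilon>0$ and $1\le i\le N_*$. Let $\varepsilon'>0$ be small, to be fixed later in terms of $\varepsilon$, $\alpha_0$, $K$ and $\min_i\mu(S_i)$. Apply Proposition~\ref{Pro:exceptional-set} to $A:=B$ with the Pesin block ${\rm PES}_L$; the local unstable manifolds there are taken of size $\delta(L)$, which contain the atoms $\xi_i^u(x)$. This gives $t_0$ such that, for every $t_n\ge t_0$, there is an exceptional set $M^{t_n}_{\varepsilon'}$ with $\mu(M^{t_n}_{\varepsilon'})<\varepsilon'$. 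Off this set, membership in $T^{t_n}(B)$ is constant along each atom $\xi_i^u(x)$, $x\in\Pi_i^u$.

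The key step is a Fubini-type estimate on the stack $S_i$. Disintegrating $\mu|_{S_i}$ along $\xi_i^u$ gives
$$\int_{S_i}\mu_{\xi_i^u(x)}\big(M^{t_n}_{\varepsilon'}\big)\,{\rm d}\mu(x)\le \varepsilon'.$$
Let $G:=\{x\in S_i:\mu_{\xi_i^u(x)}(M^{t_n}_{\varepsilon'})<\sqrt{\varepsilon'}\}$. By Chebyshev's inequality, $\mu(S_i\setminus G)\le\sqrt{\varepsilon'}$. If $\sqrt{\varepsilon'}<\alpha_0$, then $\mu(T^{t_n}(B)\cap\Pi_i^u\cap G)>\alpha_0-\sqrt{\varepsilon'}>0$. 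Moreover, $G$ is a union of atoms up to a null set, and a.e.\ point is $\mu_{\xi_i^u(x)}$-typical. Hence we can pick $x\in T^{t_n}(B)\cap\Pi_i^u\cap G$ with $x\notin M^{t_n}_{\varepsilon'}$, whose atom is a genuine local unstable manifold of a point of $\Pi_i^u\subset{\rm PES}_L$, and for which the density bound $K^{-1}\le {\rm d}\mu_{\xi_i^u(x)}/{\rm d}{\rm Leb}_{\xi_i^u(x)}\le K$ holds. We need $x\notin M^{t_n}_{\varepsilon'}$ itself; since $\mu(M^{t_n}_{\varepsilon'})<\varepsilon'<\alpha_0-\sqrt{\varepsilon'}$ for small $\varepsilon'$, such an $x$ exists after removing this set too.

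It remains to conclude. Every $w\in\xi_i^u(x)\setminus M^{t_n}_{\varepsilon'}$ lies on the same local unstable manifold as $x$, and $x\in T^{t_n}(B)$ with $x\notin M^{t_n}_{\varepsilon'}$. Proposition~\ref{Pro:exceptional-set} therefore forces $w\in T^{t_n}(B)$. Thus
$$\xi_i^u(x)\setminus T^{t_n}(B)\subset \xi_i^u(x)\cap M^{t_n}_{\varepsilon'},$$
and so
$${\rm Leb}_{\xi_i^u(x)}\big(\Pi_i^u\setminus T^{t_n}(B)\big)\le K\,\mu_{\xi_i^u(x)}\big(M^{t_n}_{\varepsilon'}\big)<K\sqrt{\varepsilon'}.$$
Choosing $\varepsilon'$ with $K\sqrt{\varepsilon'}<\varepsilon$ and $\varepsilon'+\sqrt{\varepsilon'}<\alpha_0$, and taking $t_n\ge t_0$ for all $i$ simultaneously (there are finitely many), gives \eqref{e.one-point-good-rectangle}.

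The main obstacle is the measure-theoretic bookkeeping in the middle step. We must make sure the chosen $x$ is simultaneously in $T^{t_n}(B)$, outside the exceptional set, typical for the disintegration, and on an atom that is a local unstable disc of a ${\rm PES}_L$ point. This requires the exceptional-set proposition to be applied with block ${\rm PES}_L$ and size $\delta(L)$, rather than ${\rm PES}_\ell$. If that causes trouble, the fallback is to slightly enlarge $M^{t_n}_{\varepsilon'}$ by a null set to handle atypical atoms.
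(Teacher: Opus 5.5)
Your proposal is correct and follows essentially the paper's argument: you apply Proposition~\ref{Pro:exceptional-set} to $B$ with the block ${\rm PES}_L$, average the conditional measure of the exceptional set over the atoms $\xi_i^u(x)$ through $T^{t_n}(B)\cap\Pi_i^u$ (which has mass $>\alpha_0$), and use the SRB density bound $K$ to pass to ${\rm Leb}_{\xi_i^u(x)}$. The paper phrases this as a proof by contradiction rather than your direct Chebyshev selection, and your explicit requirement $x\notin M^{t_n}_{\varepsilon'}$ fills in a point the paper leaves implicit when it invokes Proposition~\ref{Pro:exceptional-set} at the point $x$ itself.
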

\begin{proof}[Proof of Lemma \ref{Lem:Leb-pes}]
For $\varepsilon>0$, set 
$$\varepsilon'=K^{-2}\varepsilon \alpha_0 \cdot \min_{i=1,\cdots,N_*} \mu(\Pi_{i}^u).$$
For the measurable set $B$ and the constant $L$, by Proposition~\ref{Pro:exceptional-set}, for every sufficiently large $t_n$ there exists an exceptional set $M_{\varepsilon'}^{t_n}=M_{\varepsilon'}^{t_n}(B,L)$ such that $\mu(M_{\varepsilon'}^{t_n})<\varepsilon'$.

By Proposition~\ref{Pro:exceptional-set}, for any $x\in {\rm PES}_L\cap T^{t_n}(B)$, one has $W^u_{\rm loc}(x)\cap \Pi_{i}^u\setminus M_{\varepsilon'}^{t_n} \subset T^{t_n}(B)$, since $x\in T^{t_n}(B)$ and $\Pi_{i}^u\subset \PES_{L}$. 
Therefore, we have 
$$W^u_{\rm loc}(x)\cap \Pi_{i}^u\setminus T^{t_n}(B)\subset W^u_{\rm loc}(x) \cap \Pi_{i}^u \cap M_{\varepsilon'}^{t_n}.$$
It suffices to find a point $x\in T^{t_n}(B)\cap \Pi_i^u$ such that 
$${\rm Leb}_{\xi_i^u(x)}\big( \Pi_i^u \setminus T^{t_n}(B) \big)< \varepsilon.$$
If no such point exists, then for every $x\in T^{t_n}(B)\cap \Pi_i^u$, one has
$${\rm Leb}_{\xi_i^u(x)}\big(\Pi_i^u \cap M_{\varepsilon'}^{t_n} \big) \ge {\rm Leb}_{\xi_i^u(x)} \big( \Pi_i^u \setminus T^{t_n}(B) \big) \ge \varepsilon.$$ 
By the SRB property of $\mu$, one has that 
\begin{align*}
\mu(M_{\varepsilon'}^{t_n})&\ge\mu(S_i  \cap M_{\varepsilon'}^{t_n})\\
&\geq \int_{\Pi_i^u\cap T^{t_n}(B)  }\mu_{\xi_i^u(x)}(\Pi_i^u \cap M_{\varepsilon'}^{t_n})~{\rm d}\mu(x)\\
&\ge K^{-1}\int_{\Pi_i^u\cap T^{t_n}(B) }{\rm Leb}_{\xi_i^u(x)}(\Pi_i^u \cap M_{\varepsilon'}^{t_n})~{\rm d}\mu(x)\\
&\ge K^{-2}\varepsilon \cdot \mu(\Pi_i^u\cap T^{t_n}(B) )\ge \varepsilon'.
\end{align*}
This contradicts the fact that $\mu(M_{\varepsilon'}^{t_n})<\varepsilon'$.
\end{proof}
  
Fix $1\le i\le N_{*}$. 
For each $\varepsilon>0$, by Lemma \ref{Lem:Leb-pes} for all sufficiently large $t_n$ there exists $x\in T^{t_n}(B)\cap \Pi_i^u$ such that 
\begin{equation}
{\rm Leb}_{W^u(x)}\big(\Pi_i^u \setminus T^{t_n}(B)\big)< \varepsilon \cdot {\rm Leb}_{W^u(x)}(\Pi_i^u).
\end{equation}
By the fact that $B$ is $s$-saturated, $C=T^{[-\tau,\tau]}(B)$ and $\alpha \ll \tau$, for every $z\in \Pi_i^u\subset {\rm PES}_L$, if $y\in \Pi_i^u \cap T^{t_n}(B)$, then we have 
$h^{cs}_{x,z}(y)\in T^{t_n}(C)$.
Hence,  $h^{cs}_{x,z}(\Pi_i^u\cap T^{t_n}(B))\subset T^{t_n}(C)$.
By $\Pi_{i}^u$ is an $u$-rectangle, we have $h^{cs}_{x,z}(W^u_{\loc}(x)\cap \Pi_i^u)=W^u_{\loc}(z)\cap \Pi_i^u$. 
Therefore, 
$$W^u_{\loc}(z)\cap \Pi_i^u \setminus T^{t_n}(C)\subset h^{cs}_{x,z}(\Pi_i^u \setminus T^{t_n}(B)).$$
Thus, by Theorem~\ref{Thm:holonomy}, we have that
\begin{align*}
	{\rm Leb}_{W^u(z)}(\Pi_{i}^u \setminus T^{t_n}(C))\leq {\rm Leb}_{W^u(z)}\big(h^{cs}_{x,z}(\Pi_i^u \setminus T^{t_n}(B))\big) \leq 2{\rm Leb}_{W^u(x)}(\Pi_i^u \setminus T^{t_n}(B)).
\end{align*}
Therefore, for every $z\in \Pi_i^u$ we have 
$${\rm Leb}_{\xi_i^u(z)}(\Pi_i^u \setminus T^{t_n}(C))=\frac{{\rm Leb}_{W^u(z)}(\Pi_{i}^u \setminus T^{t_n}(C))}{{\rm Leb}_{W^u(z)}(\xi_i^u(z))}\leq \frac{4{\rm Leb}_{W^u(x)}(\Pi_{i}^u\setminus T^{t_n}(B))}{{\rm Leb}_{W^u(x)}(\xi_i^u(x))}\leq 
4\varepsilon.$$
Thus, for all sufficiently large $t_n$ we have that 
\begin{align*}
\mu(\Pi_i^u\setminus T^{t_n}(C))
    &\leq \int \mu_{\xi_i^u(z)}(\Pi_i^u\setminus T^{t_n}(C)){\rm d}\mu(z)\\
	&\leq K\int {\rm Leb}_{\xi_i^u(z)}(\Pi_i^u \setminus T^{t_n}(C))~{\rm d}\mu(z)\\
	&\leq 4\varepsilon K.
\end{align*}
This completes the proof of Proposition \ref{Prop:main-K}.
\end{proof}

\section{Proof of Theorem~\ref{Thm:K-to-B}: the B-property}
The key point in proving Bernoulli property is to establish the very weak Bernoulli property. 
We first recall its definition.

\subsection{The distance $\bar d$}
Assume that $(X,\mathcal B,\mu)$ is a probability space. 
Given a measurable set $P$ with $\mu(P)>0$, we define the conditional measure $\mu_P$ by 
$$\forall B\in\mathcal B, ~~~\mu_P(B):=\frac{\mu(P\cap B)}{\mu(P)}.$$
and define the $\sigma$-algebra $\mathcal{B}_P$ on $P$ by $\mathcal{B}_P:=\{B\cap P:~B\in\mathcal B\}$.
Thus, we obtain a new probability space $(P,\mathcal B_P,\mu_P)$. 
Given a finite measurable partition $\mathcal P=\{P_1,P_2,\cdots,P_n\}$ of $X$, we define a partition of the probability space $(P,\mathcal B_P,\mu_P)$ by:
$$\mathcal P|P:=\{P_1\cap P,~P_2\cap P,~\cdots,~P_n\cap P\}.$$
 
For an ordered partition $\mathcal P=\{P_1,\cdots,P_N\}$ of $(X,\mathcal B,\mu)$ and for $x\in X$, define $\mathcal P_x=i$ if $x\in P_i$.
For any two ordered partitions $\mathcal P=\{P_1,\cdots,P_N\}$ and $\mathcal Q=\{Q_1,\cdots,Q_N\}$, we define the partition distance by
$$d(\mathcal P,\mathcal Q):=\sum_{i=1}^N\mu(P_i \triangle Q_i) = 2 \int \mathbb{1}_{[\mathcal P(x) \neq \mathcal Q(x)]} {\rm d}\mu(x).$$

Assume that $\{\mathcal P^i\}_{i=1}^n$ and $\{\mathcal Q^i\}_{i=1}^n$ are finite sequences of ordered partitions of $(X,\mathcal B,\mu)$ and $(Y,\mathcal C,\nu)$, respectively, and that each partition consists of $N$ elements for some $N\in\mathbb N$. 
We write
$$\mathcal P^i=\{P_1^i,\cdots,P_N^i\},~~\mathcal Q^i=\{Q_1^i,\cdots,Q_N^i\}.$$
We say that $\{\mathcal P^i\}_{i=1}^n$ and $\{\mathcal Q^i\}_{i=1}^n$ have the same distribution, and write $\{\mathcal P^i\}_{i=1}^n \sim \{\mathcal Q^i\}_{i=1}^n$, if
$$\mu(P_{i_1}\cap \cdots P_{i_n})=\nu(Q_{i_1}\cap \cdots Q_{i_n}),\quad \forall (i_1,\cdots,i_n)\in \{1,\cdots,N\}^n. $$
Two sequences of ordered partitions $\{\mathcal P^i\}_{i=1}^n$ and $\{\mathcal Q^i\}_{i=1}^n$ have the same distribution if and only if there exists a measure preserving map $\theta: (X,\mathcal B,\mu)\to (Y,\mathcal C,\nu)$ such that
$$\theta(P_{i_1}^1\cap\cdots\cap P_{i_n}^n)=Q_{i_1}^1\cap\cdots\cap Q_{i_n}^n~~~\textrm{mod}~\nu,~~~\forall (i_1,\cdots,i_n)\in\{1,\cdots,N\}^n.$$

\begin{Definition}\label{Def:d-bar-distance}
Given $\{\mathcal P^i\}_{i=1}^n$ and $\{\mathcal Q^i\}_{i=1}^n$, we define their $\bar{d}$-distance by
\begin{equation*}
	\bar{d}(\{\mathcal P^i\}_{i=1}^n,\{\mathcal Q^i\}_{i=1}^n):= \inf \left\{ \frac{1}{n}\sum_{i=1}^n d({\mathcal P}^i,\mathcal Q^i):~~
	\begin{aligned}
		&\{{\mathcal P}^i\}_{i=1}^n~\textrm{are ordered partitions of~} \\
		&(Y,\mathcal C,\nu)~\textrm{such that}~\{{\mathcal P}^i\}_{i=1}^n\sim \{\mathcal Q^i\}_{i=1}^n
	\end{aligned}	
	\right \}.
\end{equation*}
\end{Definition}

Let $\theta:(X,\mathcal B,\mu)\to (Y,\mathcal C,\nu)$ be an invertible measurable map. 
Given $\varepsilon>0$, we say that $\theta$ is \textit{$\varepsilon$-measure preserving} if there exists a set $E\in\mathcal B$ with $\mu(E)<\varepsilon$ such that $\big|\frac{\nu(\theta(A)))}{\mu(A)}-1 \big|\le\varepsilon$ for any measurable set $A\subset X\setminus E$.
\begin{Remark}
In practical application, since $\varepsilon$ is an arbitrary small number, one only needs to define $\theta$ on a set with measure larger than or equal to $1-\varepsilon$. 
If $\theta:(X,\mathcal B,\mu)\to (Y,\mathcal C,\nu)$ is $\varepsilon$-measure preserving, then it follows from the definition that the inverse map $\theta^{-1}:(Y,\mathcal C,\nu)\to (X,\mathcal B,\mu)$ is $\varepsilon'$-measure preserving for some $\varepsilon'$ of the same order as $\varepsilon$.
\end{Remark}

We need the following lemma of Ornstein-Weiss \cite{OrW73} to estimate the $d$-bar distance.

\begin{Lemma}\label{Lem:Ornstein}
Assume that $\{\mathcal P^i\}_{i=1}^n$ and $\{\mathcal Q^i\}_{i=1}^n$ are two ordered partitions and $\theta:(X,\mathcal B,\mu)\to (Y,\mathcal C,\nu)$ is $\varepsilon$-measure preserving for some $\varepsilon>0$.
If 
$$\mu\left(\Big\{x:~\frac{1}{n}\sum_{i=1}^n \mathbb{1}_{[\mathcal P^i_x\neq\mathcal Q^i_{\theta(x)}]}(x)\le\varepsilon\Big\}\right)\ge 1-\varepsilon,$$
then $\bar d(\{\mathcal P^i\}_{i=1}^n,\{\mathcal Q^i\}_{i=1}^n)\le 16\varepsilon$, where $\mathcal P^i_x=j$ if $x\in P^i_j\in \cP^i$.
\end{Lemma}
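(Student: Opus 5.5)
The plan is to build an explicit coupling of $\{\mathcal P^i\}$ and $\{\mathcal Q^i\}$ from $\theta$, and then repair the part where $\theta$ fails to be exactly measure preserving. The $\bar d$-distance only asks for some partitions $\{\tilde{\mathcal P}^i\}$ of $(Y,\mathcal C,\nu)$ with the same joint distribution as $\{\mathcal P^i\}$, so I will construct one directly.

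Let $G=\{x:\frac1n\sum_i \mathbb{1}_{[\mathcal P^i_x\neq \mathcal Q^i_{\theta(x)}]}\le\varepsilon\}\setminus E$, where $E$ is the exceptional set in the definition of $\varepsilon$-measure preserving. Then $\mu(G)\ge 1-2\varepsilon$.

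The first step is to compare $\theta_*\mu$ with $\nu$. On subsets $A$ of $X\setminus E$ we have $\nu(\theta A)\in[(1-\varepsilon)\mu(A),(1+\varepsilon)\mu(A)]$. Define the sub-probability measure $\lambda:=(1-\varepsilon)\,\theta_*(\mu|_G)$. Then $\lambda\le\nu$ on $Y$, and the total mass satisfies $\lambda(Y)\ge(1-\varepsilon)(1-2\varepsilon)\ge 1-3\varepsilon$.

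The second step uses the fact that these are Lebesgue spaces, which we may assume as usual. Under that assumption I would choose a measure-preserving isomorphism $\phi:(X,\mu)\to(Y,\nu)$ with two properties. First, a subset $X_1\subset G$ with $\mu(X_1)=1-\varepsilon$ maps so that $\phi|_{X_1}$ agrees with $\theta$ up to reweighting. Concretely, the measure $\nu$ splits as $\lambda+(\nu-\lambda)$, and $\phi$ realizes $\mu|_{X_1}\mapsto\lambda$ through a measurable reparametrization along fibres of $\theta$. Second, $\phi$ sends the complement onto the rest arbitrarily.

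A cleaner way to phrase this is through the joining $\Lambda$ on $X\times Y$. Put $\Lambda=(\mathrm{id}\times\theta)_*((1-\varepsilon)\mu|_G)$ and add any coupling of the leftover marginals $\mu-(1-\varepsilon)\mu|_G$ and $\nu-\lambda$. This $\Lambda$ is a coupling of $\mu$ and $\nu$. Standard facts then give that $\bar d$ is bounded by $\frac1n\sum_i 2\,\Lambda\{(x,y):\mathcal P^i_x\neq\mathcal Q^i_y\}$, with an infimum over couplings; in nonatomic Lebesgue spaces couplings can be approximated by isomorphisms.

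The third step is the estimate. The good part contributes at most
\[
\frac{2}{n}\sum_i(1-\varepsilon)\,\mu\{x\in G:\mathcal P^i_x\neq\mathcal Q^i_{\theta x}\}\le 2\varepsilon .
\]
The leftover part has mass at most $1-(1-\varepsilon)(1-2\varepsilon)\le 3\varepsilon$, so it contributes at most $2\cdot3\varepsilon$. Adding these gives roughly $8\varepsilon$, well within $16\varepsilon$.

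The main obstacle is the second step: passing from a coupling to the genuine partitions $\tilde{\mathcal P}^i$ of $Y$ with exactly the same distribution that Definition~\ref{Def:d-bar-distance} requires. I would handle it in the standard Ornstein way. Fix the atoms $Q_{j_1}^1\cap\dots\cap Q_{j_n}^n$. Inside each atom, and using nonatomicity, assign subsets of measures $\Lambda(P_{i_\cdot}\times Q_{j_\cdot})$ labelled by the $\mathcal P$-names. Letting $\tilde{\mathcal P}^i$ read off the label gives partitions with $\{\tilde{\mathcal P}^i\}\sim\{\mathcal P^i\}$, and the mismatch equals the coupling mismatch. The slack in the constant $16$ absorbs any atom issues and the $\varepsilon'$ bookkeeping.
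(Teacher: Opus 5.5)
The paper does not prove this lemma; it quotes it from Ornstein--Weiss~\cite{OrW73}. Your coupling argument is a correct, self-contained proof, and it even gives $8\varepsilon$ in place of $16\varepsilon$.

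Compared with the citation, your route makes explicit exactly what is used:
\begin{itemize}
\item only the lower half of $\varepsilon$-measure preservation, namely $\nu(\theta A)\ge(1-\varepsilon)\mu(A)$ for $A\subset X\setminus E$, which is what gives $\lambda\le\nu$;
\item the averaging bound on $G$;
\item nonatomicity of $\nu$, needed for the final splitting step.
\end{itemize}
It also fixes the intended reading of the printed definition of $\bar d$: the competing partitions of $Y$ should be distributed like $\{\mathcal P^i\}$, not like $\{\mathcal Q^i\}$.

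You can streamline the argument, since no measure on $X\times Y$ is needed. The finite matrix
\[
c_{\mathbf i\mathbf j}=(1-\varepsilon)\,\mu\big(G\cap P_{\mathbf i}\cap\theta^{-1}Q_{\mathbf j}\big)+\frac{r_{\mathbf i}\,s_{\mathbf j}}{m}
\]
suffices. Here $r_{\mathbf i}=\mu(P_{\mathbf i})-(1-\varepsilon)\mu(G\cap P_{\mathbf i})\ge 0$ and $s_{\mathbf j}=\nu(Q_{\mathbf j})-(1-\varepsilon)\mu(G\cap\theta^{-1}Q_{\mathbf j})\ge 0$; the second inequality is exactly $\lambda\le\nu$. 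Their common total is $m=1-(1-\varepsilon)\mu(G)\le 3\varepsilon$. This matrix has margins $\mu(P_{\mathbf i})$ and $\nu(Q_{\mathbf j})$. Splitting each $Q_{\mathbf j}$ into pieces of $\nu$-measure $c_{\mathbf i\mathbf j}$ gives $\{\tilde{\mathcal P}^k\}\sim\{\mathcal P^k\}$ with $\frac1n\sum_k d(\tilde{\mathcal P}^k,\mathcal Q^k)=\frac2n\sum_k\sum_{i_k\neq j_k}c_{\mathbf i\mathbf j}\le 2\varepsilon+2m$.

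Two cleanups are needed.
\begin{itemize}
\item Drop the paragraph about an isomorphism $\phi$ agreeing with $\theta$ ``up to reweighting along fibres.'' The fibres of the bijection $\theta$ are single points, and the exact splitting construction supersedes that paragraph.
\item Replace the claim that the slack in $16$ ``absorbs atom issues'' with an explicit hypothesis that $\nu$ is nonatomic.
\end{itemize}

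No constant can absorb atoms. Take $X=[0,1]$ with Lebesgue measure, $\nu=(1-\varepsilon)\,\mathrm{Leb}+\varepsilon\,\delta_0$ on $Y=[0,1]$, $\theta=\mathrm{id}$, and $E=\{0\}$. Let $\mathcal P^i=\mathcal Q^i$ be the $i$-th binary-digit partitions, with $2^{-n}<\varepsilon$. All hypotheses hold with zero mismatch. Yet the atom forces some name set of any partition of $Y$ to have $\nu$-measure at least $\varepsilon>2^{-n}$. So no partitions of $Y$ have the distribution of $\{\mathcal P^i\}$, and $\bar d=\inf\emptyset$.

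In the paper's application the measures involved are nonatomic, so assuming nonatomicity costs nothing.
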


\subsection{Very weak Bernoulli partitions}
Let $(X,\mathcal B,\mu,T)$ be a measure preserving automorphism and let $\cP=\{P_1,\cdots,P_N\}$ be a finite measurable partition.
For $\varepsilon>0$, We say that $\varepsilon$-almost every $P\in \mathcal{P}$ has the property $\ast$, when $\ast$ holds for a set of atoms with total measure  $\geq 1-\varepsilon$.

If $\cP=\{P_1,\cdots,P_N\}$ is an ordered partition, then for each $i \in \ZZ$, $T^{-i}\cP:=\{T^{-i}P_1,\cdots, T^{-i}P_N\}$ is also an ordered partition.
Moreover, for any $E\in \mathcal B$, the restriction $\mathcal{P}|E:=\{P_1\cap E,\cdots,P_N\cap E\} $ is also an ordered partition.
\begin{Definition}
Let $(X,\mathcal B,\mu,T)$ be a measure preserving automorphism and let $\cP$ be a finite measurable ordered partition of $X$.
We say that $\mathcal P$ is very weak Bernoulli (VWB) if for every $\varepsilon>0$, there is $N_0=N_0(\varepsilon)$ such that for every $n\ge 0$ and every $N'\ge N\ge N_0$ the following holds: for $\varepsilon$-almost every $P\in\bigvee_{k=N}^{N'}T^k\mathcal P$ we have
\begin{equation}\label{e.wvb}
\bar d\big(\{T^{-i}\mathcal P\}_{i=1}^n,~\{T^{-i}\mathcal P|P\}_{i=1}^n\big)<\varepsilon,
\end{equation}
where $\{T^{-i}\mathcal P\}_{i=1}^n$ is a sequence of ordered partitions in $(X,\mathcal B,\mu)$ and $\{T^{-i}\mathcal P|P\}_1^n$ is a sequence of ordered partitions in $(P,~\mathcal B_P,~\mu_P)$ 
\end{Definition}

\begin{Theorem}[Ornstein's theorem, \cite{Orn70,Orn70b,OrW73}]\label{Thm:Ornstein}
Assume that $(X,\mathcal B,\mu,\{T^t\})$ is a probability-preserving flow.
If for some $t_0 \neq 0$, the system $(X,\mathcal B,\mu,T^{t_0})$ admits an increasing sequence of very weak Bernoulli partitions that generate $\mathcal B$, then $\mu$ is Bernoulli.
\end{Theorem}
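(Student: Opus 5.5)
The plan is to reduce the statement to three classical results of Ornstein's isomorphism theory and apply them in order. These are: very weak Bernoulli implies finitely determined; an increasing limit of Bernoulli factors is Bernoulli; and for flows, Bernoullicity of one time-$t$ map propagates to all of them. Write $S:=T^{t_0}$ and let $\mathcal P_1\le\mathcal P_2\le\cdots$ be the given VWB partitions for $S$ with $\bigvee_n\mathcal P_n=\mathcal B$.

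\textbf{Step 1: each factor is Bernoulli.} Fix $n$ and consider the factor $\sigma$-algebra $\mathcal B_n:=\bigvee_{k\in\ZZ}S^k\mathcal P_n$. I would invoke Ornstein--Weiss~\cite{OrW73}: a VWB process is \emph{finitely determined}. Concretely, any stationary process whose $m$-block distribution and entropy are close to those of $(S,\mathcal P_n)$ is $\bar d$-close to it. The argument is the standard inductive $\bar d$-matching: split the future into blocks, and use the conditional estimate~\eqref{e.wvb} to glue joinings block by block, with Lemma~\ref{Lem:Ornstein} turning the resulting $\varepsilon$-measure preserving maps into $\bar d$ bounds. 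Ornstein's isomorphism theorem~\cite{Orn70} then shows that a finitely determined generating process is isomorphic to a Bernoulli shift. Hence $(X,\mathcal B_n,\mu,S)$ is Bernoulli for every $n$.

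\textbf{Step 2: passing to the limit.} The factors $\mathcal B_n$ increase to $\mathcal B$ because $\mathcal P_n$ is increasing and generating. By Ornstein's theorem that an increasing union of Bernoulli factors generating the full $\sigma$-algebra is Bernoulli~\cite{Orn70b}, the map $S=T^{t_0}$ is Bernoulli on $(X,\mathcal B,\mu)$. The mechanism is that the finitely determined property is closed under $\bar d$-limits of generators. This is the step I expect to be the main technical obstacle if done from scratch. The key is a copying lemma: a Bernoulli factor can be extended to absorb a slightly finer partition while keeping the joint process close in $\bar d$ to an independent one. Here I would simply cite Ornstein's result rather than reprove it.

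\textbf{Step 3: from $T^{t_0}$ to $T^1$.} We now know $T^{t_0}$ is Bernoulli with $t_0\neq0$; replacing $t_0$ by $-t_0$ if needed, assume $t_0>0$. By Ornstein's theorem on flows (``Imbedding Bernoulli shifts in flows''), if one time-$t_0$ map of a measure-preserving flow is Bernoulli, then every $T^t$ with $t\neq0$ is Bernoulli. This rests on two facts:
\begin{itemize}
\item a $k$-th root of a Bernoulli shift, $k\in\NN$, is Bernoulli, so $T^{t_0/k}$ is Bernoulli for all $k$;
\item factors of Bernoulli shifts are Bernoulli, and the entropy scales linearly in $t$, so by Ornstein's isomorphism theorem the property passes to all rational multiples of $t_0$ and then, by an approximation/generator argument, to $t=1$.
\end{itemize}
Thus $(X,\mathcal B,\mu,T^1)$ is Bernoulli, which is exactly the Definition of a Bernoullian flow.
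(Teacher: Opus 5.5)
Your proposal is correct and is essentially what the paper does: the paper gives no proof of its own and simply invokes Ornstein's theory, and your three steps (VWB process $\Rightarrow$ Bernoulli factor, increasing generating union of Bernoulli factors $\Rightarrow$ Bernoulli, Bernoullicity of one time-$t_0$ map $\Rightarrow$ Bernoullicity of $T^1$) are exactly the ingredients that this citation packages. The one caveat is that your ``approximation/generator argument'' for irrational $1/t_0$ is not an argument in itself and has to rest on Ornstein's flow theorem; in the paper's application $t_0=-1$, so Step~3 reduces to the triviality that the inverse of a Bernoulli automorphism is Bernoulli.
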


Given a partition $\mathcal P=\{P_1,\cdots,P_n\}$, define $\partial \mathcal P=\bigcup_{i=1}^n\partial P_i$ and ${\rm Diam}\mathcal P=\max_{i}{\rm Diam}(P_i)$. 
The main theorem in this section is:
\begin{Theorem}\label{Thm:partition-VWB}
	Assume that $\mu$ is a hyperbolic SRB measure for a flow $\{T^t\}_{t\in\mathbb R}$, and that $\mu$ has the K-property.
	Let $\lambda>0$ be sufficiently small, and let $\mathcal P$ be a finite partition with piecewise smooth boundary such that
	\begin{equation}\label{e.super-regular}
		\sum_{n=1}^{\infty} \mu\big( B(\partial \mathcal P, e^{-n\lambda}) \big)<\infty.
	\end{equation}
	Then $\mathcal P$ is a very weak Bernoulli partition for $(M,\mathcal{B},\mu,T^{-1})$.
\end{Theorem}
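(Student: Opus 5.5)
We follow the Ornstein–Weiss/Chernov–Haskell/Ledrappier scheme, adapted to the center-stable holonomy of the flow. Write $T=T^1$ and work with $T^{-1}$. An atom $P\in\bigvee_{k=N}^{N'}T^k\mathcal P$ of the future of $T^{-1}$ is determined by where $T^{-k}x$ lies for $k\in[N,N']$. Since $T^{-k}$ contracts unstable leaves, such atoms are essentially unions of pieces of local unstable manifolds. We must compare the partitions $\{T^{-i}\mathcal P\}_{i=1}^n$ on $(M,\mu)$ and on $(P,\mu_P)$. The plan is to build, via Lemma~\ref{Lem:Ornstein}, an $\varepsilon$-measure preserving map $\theta:P\to M$ that moves points along $W^{cs}$. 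Points on a common $W^{s}$ leaf then have $T^{i}$-orbits (for $i\ge1$) that stay close, and so they mostly carry the same $T^{-i}\mathcal P$-names. The center direction is harmless because the holonomy only shifts time by at most $\tau\ll 1$, and this shift is absorbed into the boundary estimate.

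\textbf{Step 1 (good set).} Fix $\varepsilon$. Choose $\ell$ with $\mu(\PES_\ell)>1-\varepsilon^2$ and cover $\PES_\ell$ by small disjoint $(\alpha,u)$-rectangles $\Pi_j$ via Proposition~\ref{Pro:rectangle-covering}. Summability~\eqref{e.super-regular} and Borel–Cantelli then give a set $G$ with $\mu(G)>1-\varepsilon^2$ and an $n_0$ with the following property: for $x\in G$ and all $m\ge n_0$, $d(T^m x,\partial\mathcal P)\ge e^{-m\lambda}$. For $x\in G\cap\Pi_j$ and $y\in W^{cs}_{\loc}(x)$, Theorem~\ref{Thm:Pesin-manifold} gives $d(T^m x,T^m y)\le 2L e^{-m(\chi-\varepsilon_0)}\alpha$, up to a bounded time shift. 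Taking $\lambda<\chi-\varepsilon_0$, $T^m x$ and $T^m y$ therefore lie in the same atom of $\mathcal P$ for $m\ge n_0$. The finitely many indices $m<n_0$ contribute a proportion of at most $n_0/n$ to the disagreement. For small $n$ we instead use $\alpha$ small together with the regularity of $\partial\mathcal P$, so that $\mu(B(\partial\mathcal P,c\alpha))$ is small. This controls the name disagreement in Lemma~\ref{Lem:Ornstein} for every $n$ simultaneously.

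\textbf{Step 2 (atoms of the future are nearly unions of unstable plaques with uniform conditional density).} This is the K-property input. Using the partition $\eta$ subordinate to $W^u$ from Section 2 (and Theorem~\ref{Thm:metric-transitive-K}, which identifies the tail with the trivial algebra), we show that for $N\ge N_0$ and $\varepsilon$-almost every atom $P$, the measure $\mu_P$ restricted to each $\Pi_j$ satisfies two properties: $\mu_P(\Pi_j)\approx\mu(\Pi_j)$, and $\mu_P$ disintegrates along the $\xi_j^u$-plaques with densities close to the SRB densities of Corollary~\ref{Cor:SRB-U}. The mechanism is as follows. The tail triviality of the K-property, combined with the martingale convergence theorem, implies that $\mu(\Pi_j\cap \cdot\mid \bigvee_{k\ge N}T^k\mathcal P)\to\mu(\Pi_j\cap\cdot)$ in $L^1$ as $N\to\infty$, and the same holds after refining by the finite partition into $\Pi_j$'s and a fine partition transverse to the plaques. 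For the within-plaque structure, note that $T^{-k}$ contracts unstable plaques and boundary points are avoided on $G$ by Step 1. Hence an atom $P$ contains whole unstable plaques of $\Pi_j$ except for a set of measure $<\varepsilon$, uniformly in $N'$. Its conditional measures on those plaques are then the SRB conditionals. Here Corollary~\ref{Cor:SRB-U} gives density ratio in $(1-\varepsilon,1+\varepsilon)$ with respect to normalized Lebesgue measure.

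\textbf{Step 3 (the map $\theta$).} On each $\Pi_j$ we match the transverse (plaque-index) distributions of $\mu_P|_{\Pi_j}$ and $\mu|_{\Pi_j}$ by a measure-preserving map of the quotient, which exists by Step 2 since the two transverse measures are close. Along plaques we use the holonomy $h^{cs}_{x,z}$, whose Jacobian lies in $(1-\varepsilon,1+\varepsilon)$ by Theorem~\ref{Thm:holonomy}. Composing this holonomy with the density comparison of Corollary~\ref{Cor:SRB-U} on both sides shows that $\theta$ is $\varepsilon$-measure preserving. By construction $\theta(y)\in T^{[-\tau,\tau]}W^s_{\loc}(y)$, so Step 1 yields the hypothesis of Lemma~\ref{Lem:Ornstein}, and hence $\bar d<16\varepsilon$.

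\textbf{Main obstacle.} The main obstacle is Step 2: showing that almost every atom of $\bigvee_{k=N}^{N'}T^k\mathcal P$ is, up to $\varepsilon$, saturated by unstable plaques of the rectangles, uniformly in $N'$, with transverse distribution close to that of $\mu$. We would first prove this for $N'=\infty$ via the K-property and martingale convergence. Uniformity in $N'$ would then come from the observation that the finite refinement differs from the infinite one only in whether $T^{-k}$ of a plaque meets $\partial\mathcal P$ for some $k>N'$. The measure of this exceptional set is bounded by the tail $\sum_{k>N}\mu(B(\partial\mathcal P,e^{-k\lambda}))$, which is small by~\eqref{e.super-regular}.
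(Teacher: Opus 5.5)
Your overall scheme can be made to work, but Step~1 fails as written, and it fails exactly where flows differ from diffeomorphisms. Your $\theta$ sends $y$ into $T^{[-\tau,\tau]}W^s_{\loc}(y)$. Writing $\theta(y)=T^{s}z$ with $z\in W^s_{\loc}(y)$, you only get $d(T^my,T^m\theta(y))\le 2Le^{-m(\chi-\varepsilon_0)}\alpha+C|s|$, and the flow-direction term $C|s|$ is never contracted. So it is false that $T^my$ and $T^m\theta(y)$ lie in the same atom of $\mathcal P$ for all $m\ge n_0$. For large $m$ the two names differ whenever the orbit segment between $T^my$ and $T^{m+s}y$ crosses $\partial\mathcal P$, and this typically happens for a positive proportion (of order $|s|$) of times $m$. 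Hence the bound $n_0/n$ on the proportion of disagreements is unavailable, and your large-$n$ case collapses.

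The repair is the argument of Lemma~\ref{Lem:d-bar-prepare}, and it also makes the small/large $n$ split unnecessary:
fix $\delta$ with $\mu(B(\partial\mathcal P,\delta))<\varepsilon^2$;
take $\alpha$ (hence $|s|$) so small that $d(T^my,T^m\theta(y))\le\delta$ for all $m\ge 0$;
note that a disagreement at time $m$ forces $T^m\theta(y)\in B(\partial\mathcal P,\delta)$;
apply Markov's inequality in $(M,\mu)$, where $\mu$ is invariant, to the Birkhoff sum of the indicator of $B(\partial\mathcal P,\delta)$.
Pulling back by the almost measure-preserving $\theta$ gives an exceptional set of measure $O(\varepsilon)$, uniformly in $n$. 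Exponential contraction and \eqref{e.super-regular} are needed only to saturate the atoms, never to compare names.

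Apart from this, your route is a valid alternative, essentially the time-reversal of the paper's. You condition on $\bigvee_{k=N}^{N'}T^k\mathcal P$, whose atoms are saturated by unstable plaques. You keep the SRB conditionals on $u$-plaques intact, and you carry whole plaques of an $(\alpha,u)$-rectangle onto other plaques by $h^{cs}$, after matching the quotient measures on the space of $u$-plaques.

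The paper instead conditions on $\bigvee_n T^{-n}\mathcal P$, whose atoms are saturated by stable plaques (Proposition~\ref{Pro:s-leaf-wise}). Its map (Proposition~\ref{Pro:local-map}) sends each $cu$-plaque into itself, by transporting via $h^s$ a single isomorphism of normalized Lebesgue measures on one $cu$-plaque. Since $E$ is $s$-saturated and $\Jac(h^s)\approx 1$, the proportion of $E$ in each $cu$-plaque is nearly constant, so the (typically singular) transverse measure is never manipulated. Your orientation is the classical Ornstein--Weiss/Ledrappier one and reuses the holonomy $h^{cs}$ of Section~\ref{Sec:to-prove-K}, at the price of an abstract transverse isomorphism.

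Three smaller points.
\begin{enumerate}
\item The saturation of your atoms comes from \eqref{e.super-regular} applied to backward iterates, i.e.\ to $\bigcup_{k\ge N_0}T^{k}B(\partial\mathcal P,e^{-k\lambda})$; your $G$ controls forward iterates instead. This saturation holds for all $N'$ at once and uses no K-property, so the ``main obstacle'' of Step~2 is not one. K-mixing is needed only for $\mu_P(\Pi_j)\approx\mu(\Pi_j)$.
\item The transverse isomorphism does not exist because the quotient measures are close. The one coming from $\mu_P$ is the other restricted to the plaques inside $P$ and renormalized, hence far from it in density. It exists because non-atomic Lebesgue spaces are isomorphic.
\item With the paper's definition, conditioning on $\bigvee T^k\mathcal P$ and comparing $\{T^{-i}\mathcal P\}$ is VWB for $T$, not for $T^{-1}$. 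This is harmless for Theorem~\ref{Thm:Ornstein}, but it is not literally the stated claim.
\end{enumerate}
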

Recall that in Theorem \ref{Thm:weakly-to-K} we have shown that a weakly mixing hyperbolic SRB measure has the K-property. 
Combining the above theorem with Ornstein's theorem (Theorem \ref{Thm:Ornstein}), we can now give a quick proof of Theorem \ref{Thm:K-to-B}.

\begin{proof}[Proof of Theorem~\ref{Thm:K-to-B}]
From \cite[Proposition 3.2]{LeS82}, there exists an increasing sequence of finite partitions $\{\mathcal P_m\}_{m>0}$ with piecewise smooth boundaries such that
\begin{itemize}
\item ${\rm Diam}(\mathcal{P}_m)\to 0$ as $m\to\infty$;
\item \eqref{e.super-regular} holds for each $\mathcal P_m$.
\end{itemize}
By Theorem~\ref{Thm:partition-VWB}, each $\mathcal P_m$ is a very weak Bernoulli partition for $(M,\mathcal{B},T^{-1},\mu)$.
The condition ${\rm Diam}(\mathcal P_m)\to 0$ implies that $\{\mathcal P_m\}$ generates $\mathcal B$.  
Therefore, by Theorem~\ref{Thm:Ornstein}, the flow $(M,\mathcal{B},\mu,\{T^t\}_{t\in\mathbb R})$ is Bernoullian.
\end{proof}

\subsection{Proof of Theorem \ref{Thm:partition-VWB}}
To verify that the partition $\mathcal P$ in Theorem~\ref{Thm:partition-VWB} is very weak Bernoulli, 
By Lemma~\ref{Lem:Ornstein}, it suffices to show that for every $\varepsilon>0$, there exists $N_0>0$ such that for any $n>0$, any $N>N'>N_0$, and $\varepsilon$-almost every $P\in \bigvee_{n=N'}^{N} T^{-n} \mathcal P$, there exists a $13\varepsilon$-measure preserving map $\theta:(P,~\mathcal{B}_{P},\mu_P)  \rightarrow (M,\mathcal{B},\mu)$ such that
\begin{equation}\label{e.precise-formula}
\mu\left(\Big\{x:\frac{1}{n}\sum_{j=0}^{n-1}\mathbb{1}_{[(T^j\mathcal P)_{x}\neq (T^j\mathcal P|P)_{(\theta^{-1}x)}]}(x)\leq \varepsilon \Big\}\right)\geq 1-\varepsilon.
\end{equation}
In this case, the $13\varepsilon$-measure preserving property can be equivalently formulated as follows: a map $\theta:(P,~\mathcal{B}_{P},\mu_P)  \rightarrow (M,\mathcal{B},\mu) $ is $\varepsilon$-measure preserving if and only if there is a set $E\in \mathcal{B}_P$ with $\mu_P(E)=\frac{\mu(E)}{\mu(P)}>1-\varepsilon$ such that for any $B\subset E$, one has
\begin{equation}\label{e.13-varpesilon}
	\left|\frac{\mu(\theta(B))}{ \mu_P(B)}-1\right|=\left|\frac{\mu(\theta(B))\cdot\mu(P)}{ \mu(B)}-1\right|\leq 13\varepsilon.
\end{equation}
In the remainder of the paper, we will establish the existence of such a $13\varepsilon$-measure preserving map, thereby completing the proof of Theorem~\ref{Thm:partition-VWB}

\subsubsection{Leaf-wise intersections}
Assume that $\mu$ is an ergodic $\chi$-hyperbolic measure for some $\chi>0$, and let $0<\varepsilon_0 \ll \chi$. 
Recall the definition of the Pesin block ${\rm PES}_\ell^{\chi,\varepsilon_0}$ from Section~\ref{Sec:Pesin}, and for simplicity write ${\rm PES}_\ell := {\rm PES}_\ell^{\chi,\varepsilon_0}$.

We also recall the definition of an $s$-rectangle from Definition~\ref{Def:rectangle}.
A set E is said to intersect a $s$-rectangle $\Pi^s$ \textit{$s$-leaf-wise}, if for every $x\in E$, one has
$$W^{s}_{\mathrm{loc}}(x)\cap \Pi \subset W^{s}_{\mathrm{loc}}(x)\cap E.$$
In this subsection, we only consider $s$-rectangles. 
So we simply refer to them as rectangles when no confusion arises.

\begin{Proposition}\label{Pro:s-leaf-wise}
Assumes that $\lambda\in(0,\chi/2)$, and let $\mathcal P$ be the partition as in Theorem~\ref{Thm:partition-VWB}. 
Then, for each $\varepsilon>0$ and every $s$-rectangle $\Pi\subset \PES_{L}$, there exists $N_0\in \NN$ such that for any $N'>N\geq N_0$ and for $\varepsilon$-almost every $P\in\bigvee_{n=N}^{N'}T^{-n}\mathcal P$, there exists $E\subset P$ which intersects $\Pi$ $s$-leaf-wise and $\mu_P(E)=\mu(E)/\mu(P)\ge 1-\varepsilon$.
\end{Proposition}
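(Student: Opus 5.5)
The idea is the Ornstein–Weiss/Pesin argument for why atoms of the future partition $\bigvee_{n=N}^{N'}T^{-n}\mathcal P$ are essentially unions of local stable leaves. Points on the same local stable manifold of a point of $\PES_L$ get exponentially closer under forward iteration. The partition $\mathcal P$ can separate them at time $n$ only if the forward orbit comes within $e^{-n\lambda}$ of $\partial\mathcal P$. The summability condition \eqref{e.super-regular} together with Borel–Cantelli makes this a rare event beyond a large time $N_0$.

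\textbf{Step 1 (bad set).} For $N_0\in\NN$ define
$$F_{N_0}:=\bigcup_{n\ge N_0} T^{-n}\big(B(\partial\mathcal P,e^{-n\lambda})\big).$$
By invariance of $\mu$ and \eqref{e.super-regular}, $\mu(F_{N_0})\le\sum_{n\ge N_0}\mu(B(\partial\mathcal P,e^{-n\lambda}))\to 0$. We need the points of whole stable plaques to be good, not just points of $\Pi$. So the bad set to control is the $s$-saturation of $F_{N_0}$ inside the rectangle:
$$G_{N_0}:=\{x\in\Pi:~W^s_{\loc}(x)\cap\Pi\cap F_{N_0}\neq\emptyset\}.$$
To control it, shrink the neighborhoods. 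For $z,z'\in W^s_{\loc}(x)$ with $x\in\PES_L$, Theorem~\ref{Thm:Pesin-manifold} gives $d(T^nz,T^nz')\le 2L e^{-n(\chi-\varepsilon_0)}\delta\le e^{-2n\lambda}$ for $n\ge N_0$, since $\lambda<\chi/2$ and $\varepsilon_0\ll\chi$ (enlarge $N_0$ if needed). Hence if some $z$ on the plaque lies in $T^{-n}B(\partial\mathcal P,e^{-2n\lambda})$, then every point of the plaque lies in $T^{-n}B(\partial\mathcal P,2e^{-2n\lambda})\subset T^{-n}B(\partial\mathcal P,e^{-n\lambda})$. Working with the parameter $2\lambda$ in the definition of $F_{N_0}$ (still summable, since the balls only get smaller) therefore gives the following: whenever one point of a plaque is bad, all points of the plaque are bad for the weaker parameter $\lambda$. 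Consequently $\mu(G_{N_0})\le\sum_{n\ge N_0}\mu(B(\partial\mathcal P,e^{-n\lambda}))\to0$.

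\textbf{Step 2 (good points share atoms).} Let $x\in\Pi\setminus G_{N_0}$ and $z\in W^s_{\loc}(x)\cap\Pi$. For each $n\ge N_0$ we have $d(T^nx,T^nz)<e^{-2n\lambda}$, and $T^nx$ is at distance at least $e^{-2n\lambda}$ from $\partial\mathcal P$. The segment (or short path) joining them therefore avoids $\partial\mathcal P$, so $T^nx$ and $T^nz$ lie in the same element of $\mathcal P$. (For piecewise smooth boundaries this means: a ball disjoint from $\partial\mathcal P$ lies in a single atom.) Thus $x$ and $z$ lie in the same atom $P$ of $\bigvee_{n=N}^{N'}T^{-n}\mathcal P$ for every $N'>N\ge N_0$.

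\textbf{Step 3 (choosing $E$).} For each atom $P$ set
$$E:=\Big(P\setminus(\Pi\cup\textstyle\bigcup_{x\in\Pi\cap G_{N_0}}W^s_{\loc}(x))\Big)\cup\big(P\cap\Pi\setminus G_{N_0}\big).$$
Points of $E$ outside $\Pi$ have no local stable plaque meeting $\Pi$ through a bad point. Any $x\in E$ whose plaque meets $\Pi$ has that whole plaque-in-$\Pi$ good, and by Step 2 the plaque lies in $P$, so it lies in $E$ (here $G_{N_0}$ is $s$-saturated in $\Pi$). Hence $E$ intersects $\Pi$ $s$-leaf-wise.

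It remains to check that $P\setminus E$ is contained in the set $H:=\bigcup_{x\in\Pi\cap G_{N_0}}W^s_{\loc}(x)$, and that $\mu(H)\to0$. The second point is the main obstacle: it requires the measure of a stable-saturation of a small subset of $\Pi$ to be small. I would obtain it from the absolute continuity of $W^s$ (Theorem~\ref{Thm:holonomy} together with the SRB structure), or more simply by putting the points of local stable plaques through $\Pi$ into the bad set directly; the same plaque-contraction argument from Step 1 bounds that set by $\sum_{n\ge N_0}\mu(B(\partial\mathcal P,e^{-n\lambda}))$ plus a small term. Finally, Markov/Chebyshev over atoms gives $\sum_P\mu(P\setminus E_P)=\mu(H)<\varepsilon^2$, so $\varepsilon$-almost every $P$ satisfies $\mu_P(E)\ge1-\varepsilon$.
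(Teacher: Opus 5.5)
Your proposal is correct and is essentially the paper's argument: exponential contraction along $W^s_{\loc}$-plaques of points of $\PES_L$, summability of $\mu(B(\partial\mathcal P,e^{-n\lambda}))$, and Markov's inequality over the atoms $P$. The only cosmetic difference is that you delete one $P$-independent union $H$ of bad plaques, using the two scales $2\lambda$ and $\lambda$, instead of the paper's maximal $s$-leaf-wise subsets $E_n\subset P_n$ (and when $\lambda$ is close to $\chi/2$ you should use some $\lambda'\in(\lambda,\chi-\varepsilon_0)$ in place of $2\lambda$). The step you call the main obstacle is already settled by your own Step 1, with no extra term and no absolute continuity of $W^s$: every point of a plaque $W^s_{\loc}(x)$ with $x\in\Pi\cap G_{N_0}$ lies in $F_{N_0}$ for the parameter $\lambda$, so $H\subset F_{N_0}$ and $\mu(H)\le\sum_{n\ge N_0}\mu(B(\partial\mathcal P,e^{-n\lambda}))$, which is exactly the paper's observation that $P\setminus E\subset F$.
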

\begin{proof}
By Theorem \ref{Thm:Pesin-manifold}, for every $x\in \PES_{L}$ and every $y,z\in W^s_{\loc}(x)$, we have
$$d(T^n(y),T^n(z))\leq  2L e^{-n\chi/2} d(y,z)\leq 2L e^{-n\chi/2}.$$ 
Given $\varepsilon>0$, together with condition~\eqref{e.super-regular}, we choose $N_0\in \mathbb{N}$ such that
$$\sum_{n=N_0}^{+\infty} \mu(B(\partial \mathcal P,{\rm e}^{-\lambda n}) )\leq \varepsilon^2,~~\text{and}~~2L e^{-N_0\chi/2}\leq e^{-N_0\lambda}.$$
Define
$$F=\bigcup_{n=N_0}^\infty T^n(B(\partial\mathcal P,{\rm e}^{-\lambda n})).$$
Since $\mu$ is an invariant measure, it follows that 
$$\mu(F)\le\sum_{n=N_0}^\infty\mu(T^n(B(\partial\mathcal P,{\rm e}^{-\lambda n})))=\sum_{n=N_0}^\infty\mu(B(\partial\mathcal P,{\rm e}^{-\lambda n}))<\varepsilon^2.$$
Fix $N>N'>N_0$.
Consider all $P\in \bigvee_{n=N}^{N'}T^{-n}\mathcal P$ such that $\mu(P\cap F)>\varepsilon\mu(P)$.
Then
\begin{align*}
	\mu\left(\bigcup \big\{P\in \bigvee_{n=N}^{N'}T^{-n}\mathcal P: \mu(P\cap F)>\varepsilon\mu(P) \big\}\right)
	\le \frac{1}{\varepsilon} \sum_{P}\mu(P\cap F)\le \frac{1}{\varepsilon}\mu(F)<\varepsilon.
\end{align*}
Therefore, $\mu(P\cap F)\leq \varepsilon\mu(P)$ for $\varepsilon$-almost every $P\in \bigvee_{n=N}^{N'}T^{-n}\mathcal P$.

For each $P\in\bigvee_{n=N}^{N'}T^{-n}\mathcal P$, we write $P=P_N\cap\cdots\cap P_{N'}$, where $P_n\in T^{-n}\mathcal P$, $n\in[N',N]$. 
Let $E_n\subset P_n$ be the maximal subset that $E_n\cap\Pi$ $s$-leaf-wise (i.e., if $E'\subset P_n$ also intersects $\Pi$ $s$-leaf-wise, then $E'\subset E_n$). 
By maximality, we obtain the following: 
\begin{itemize}
	\item if $x\in W^s_{\loc}(y)\cap P_n$ for some $y\in \Pi$, then $W^s_{\loc}(y)\cap  P_n \subset E_n$;
	\item if $x\in P_n \setminus  \bigcup_{y\in \Pi} W^s_{\loc}(y)$, then we have $x\in E_n$.
\end{itemize}
Since the boundary of $P_n$ is smooth, it follows that for each $z\in P_n\setminus E_n$, there exist $x\in \partial P_n$ and $y\in\Pi$ such that $x,z\in W^s_{\rm loc}(y)$.
Hence, we have
$$d(T^n(z),T^n(x))\leq  2L{\rm e}^{-\chi n} <{\rm e}^{-\lambda n}$$
which implies that $T^n(z)\in  B(\partial\mathcal P,{\rm e}^{-\lambda n})$. 

Let $E=E_{N'}\cap \cdots \cap E_{N}$.
Clearly, $E$ intersects $\Pi$ $s$-leaf-wise.
Recall that $\mu(P\cap F)\leq \varepsilon\mu(P)$ for $\varepsilon$-almost every $P\in \bigvee_{n=N}^{N'}T^{-n}\mathcal P$.
Since
$$\mu(P\setminus E) \leq \mu(P\cap \bigcup_{n=N'}^{N}T^nB(\partial \mathcal P,{\rm e}^{-\lambda n}) )\leq \mu(P\cap F),$$
it follows that $\mu(P\setminus E)\leq \varepsilon \mu(P)$, or equivalently, $\mu_P(E)=\frac{\mu(E)}{\mu(P)} \ge 1-\varepsilon$.
This completes the proof of the proposition.
\end{proof}

Given $\varepsilon>0$, we choose $\ell\in\mathbb N$ such that $\mu({\rm PES}_{\ell})>1-\varepsilon^2$.
By Lemma~\ref{Lem:transverse-intersection}, for the given $\ell\in \NN$ there exists $L\in \NN$ satisfying the conclusion of Lemma~\ref{Lem:transverse-intersection}.
Let $\alpha>0$ be a sufficiently small number.
By Proposition~\ref{Pro:rectangle-covering}, there are disjoint $(\alpha,s)$-rectangles $\{\Pi_i^s\}_{i=1}^{N_*}$ satisfying 
$$\PES_{\ell}\subset \bigcup_{i=1}^{N_*} \Pi_i^s\subset \PES_{L}.$$
Moreover, we may assume that $\mu(\Pi_i^s)>0$ for all $1\le i\le N_*$.

\subsubsection{Local $\varepsilon$-measure preserving map $\theta_\Pi$}

\begin{Proposition}\label{Pro:local-map}
Let $~\Pi\in \{\Pi_i^s\}_{i=1}^{N_*}$.
For every measurable subset $E\subset \Pi$ that intersects $\Pi$ $s$-leaf-wise and satisfies $\mu(E)>0$, there exists a bijective map $\theta:\big(E,\mu_E\big)\rightarrow \big(\Pi,\mu_\Pi\big)$ such that
\begin{itemize}
\item $|{\rm Jac}(\theta)(y)-1|\leq \varepsilon,~\forall y\in E$;
\item for any $y\in E$, $\theta(y)\in W^{cu}_{\loc}(y)\cap \Pi$.
\end{itemize}
\end{Proposition}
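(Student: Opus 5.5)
The plan is to build $\theta$ leaf by leaf along the center-unstable plaques of $\Pi$. Then, using the SRB property and the absolute continuity of the stable holonomy, I will show that the resulting Jacobian is uniformly close to $1$.

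\textbf{Step 1: the plaque structure of $E$.} Fix a reference point $w\in\Pi$. For $x\in\Pi$, write $\xi(x):=W^{cu}_{\loc}(x)\cap\Pi$. Because $\Pi$ is an $s$-rectangle, the family $\{\xi(x)\}$ is a measurable partition of $\Pi$ of the kind considered for the local center-unstable stack $S^{cu}$ in Section~\ref{Sec:Pesin}. Moreover, the stable holonomy $h^s_{w,x}$ maps $\xi(w)$ bijectively onto $\xi(x)$. Since $E\subset\Pi$ intersects $\Pi$ $s$-leaf-wise, $E$ is a union of sets $W^s_{\loc}(y)\cap\Pi$. Hence
$$E\cap\xi(x)=h^s_{w,x}\big(E\cap\xi(w)\big)\quad\text{for every }x\in\Pi.$$
Set $a(x):=\mu_{\xi(x)}(E)/\mu_{\xi(x)}(\Pi)$. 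This number is the conditional proportion of $E$ inside the plaque through $x$.

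\textbf{Step 2: $a(x)$ is almost constant.} Fix a small $\varepsilon'>0$ depending on $\varepsilon$. Choose $\alpha$ small enough that two estimates hold. First, by Theorem~\ref{Thm:holonomy}, $\Jac(h^s_{w,x})\in(1-\varepsilon',1+\varepsilon')$. Second, by Corollary~\ref{Cor:SRB-U} in its center-unstable version, the densities $d\mu_{\xi^{cu}(x)}/d\Leb_{\xi^{cu}(x)}$ lie in $(1-\varepsilon',1+\varepsilon')$. Combining these with Step~1 gives
$$a(x)\in\big(a(w)(1-\varepsilon')^{4},\,a(w)(1+\varepsilon')^{4}\big)\quad\text{for }\mu\text{-a.e. }x\in\Pi.$$
Since $\mu(E)=\int_\Pi a\,d\mu$, the ratio $a(x)\mu(\Pi)/\mu(E)$ also lies within $O(\varepsilon')$ of $1$. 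In particular $a>0$ almost everywhere, because $\mu(E)>0$.

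\textbf{Step 3: the leafwise maps.} For a.e. $x$, the spaces $(E\cap\xi(x),\mu_{\xi(x)}(\cdot\mid E))$ and $(\xi(x),\mu_{\xi(x)})$ are non-atomic Lebesgue probability spaces, since the conditionals are equivalent to Lebesgue on the plaque. So there is an invertible measure-preserving map $\theta_x$ between them. I will make this choice measurable in $x$ in the following way. Using the $C^1$ parametrization of the plaques as graphs over a fixed disc, fix a measurable family of Borel isomorphisms of the plaques with $[0,1]$ that send the conditional measures to measures with continuous distribution functions. Then take $\theta_x$ to be the monotone rearrangement that matches the distribution function of $\mu_{\xi(x)}(\cdot\mid E)$ with that of $\mu_{\xi(x)}$. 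Discard a null set where bijectivity fails; the two requirements are almost-everywhere statements, so this is harmless. Define $\theta|_{E\cap\xi(x)}:=\theta_x$. By construction, $\theta(y)\in W^{cu}_{\loc}(y)\cap\Pi$.

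\textbf{Step 4: the Jacobian.} Disintegrate both measures over the quotient $\Pi/\xi$. The factor measure of $\mu_E$ is $a(x)\,d\tilde\mu(x)\cdot\mu(\Pi)/\mu(E)$ relative to the factor measure of $\mu_\Pi$. On each atom, $\theta$ maps conditionals to conditionals exactly. Therefore
$$\Jac(\theta)(y)=a(x)\,\frac{\mu(\Pi)}{\mu(E)},$$
which lies in $(1-\varepsilon,1+\varepsilon)$ by Step~2, provided $\varepsilon'$ is small enough.

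The main obstacle is Step~2. It requires uniform control over the whole rectangle, in center-unstable rather than unstable plaques, of both the SRB densities and the holonomy Jacobians. This is where the smallness of $\alpha$ relative to the Pesin level $L$ must be fixed. The measurable selection in Step~3 is standard but needs the care indicated above.
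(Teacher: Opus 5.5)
Your argument is correct, but it constructs $\theta$ differently from the paper. The paper fixes one reference plaque $W^{cu}_{\loc}(w)$. It takes a single isomorphism $\theta_w$ from normalized Lebesgue measure on $E\cap W^{cu}_{\loc}(w)$ onto normalized Lebesgue measure on $\Pi\cap W^{cu}_{\loc}(w)$. It then transports this map to every plaque by stable holonomy, $\theta(y)=h^s_{w,y}\circ\theta_w\circ h^s_{y,w}(y)$. The rectangle structure together with the $s$-leaf-wise hypothesis guarantees $h^s_{y,w}(y)\in E$. The Jacobian bound is then read off from Theorem~\ref{Thm:holonomy} and Corollary~\ref{Cor:SRB-U}.

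You instead choose an exact measure isomorphism on each plaque separately, and use holonomy only in the estimate, to show that the proportion $a(x)$ is nearly constant. The paper's route gets measurability of $\theta$ for free: one isomorphism, then composition with holonomies. Yours needs the measurable selection of Step~3, which is fine as you sketch it (Borel parametrization plus matching of continuous distribution functions).

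In exchange, your version makes the Jacobian constant on plaques and isolates exactly what must be controlled. Because $\theta$ preserves plaques and goes between the \emph{normalized} measures $\mu_E$ and $\mu_\Pi$, its Jacobian always contains the transverse factor $\mu(E)/(a(x)\mu(\Pi))$. Bounding this factor requires your Step~2 observation: $a(x)$ is close to the Lebesgue proportion $\Leb(E\cap W^{cu}_{\loc}(x))/\Leb(\Pi\cap W^{cu}_{\loc}(x))$, which is nearly constant by the holonomy estimate. The paper's short proof uses this only implicitly.

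Two small points. First, with the paper's convention $\mu_Y(A)=\int_{f^{-1}(A)}\Jac(f)\,d\mu_X$, the Jacobian in Step~4 is $\mu(E)/(a(x)\mu(\Pi))$, the reciprocal of what you wrote. This is harmless, since both are $1+O(\varepsilon')$. Second, $a$ is only defined almost everywhere, so you should compare with the Lebesgue proportion, which is defined at every point, rather than with $a(w)$ at an arbitrary $w$.
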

\begin{proof}
Since $\mu(E)>0$ and $E\subset\Pi$, by the SRB property of $\mu$, there exists a point $w\in E$ such that ${\rm Leb}_{W^{cu}(w)}(E)>0$. 
We now choose a measurable bijective map
$$\theta_{w}:~\Big(E\cap W^{cu}_{\rm loc}(w),~{\rm Leb}_{E,w}\Big)\overset{{\rm onto}}\longrightarrow \Big(\Pi\cap W^{cu}_{\rm loc}(w),~{\rm Leb}_{\Pi,w}\Big)$$
such that ${\rm Jac}(\theta_{w})(z)=1,~\forall z\in E\cap W^{cu}_{\rm loc}(w)$, where ${\rm Leb}_{E,w}$ and ${\rm Leb}_{\Pi,w}$ denote the normalized restrictions of  $\Leb_{W^u(w)}$ to $E$ and $\Pi$, respectively. 
This is equivalent to say that 
$$\frac{\Leb_{W^{cu}(w)}(\theta^{-1}_{w}(A))}{\Leb_{W^{cu}(w)}(E)}=\frac{\Leb_{W^{cu}(w)}(A)}{\Leb_{W^{cu}(w)}(\Pi)}\footnote{We can have this map because non-atomic Lebesgue spaces are isomorphic to each other.},~~~\forall A\subset \Pi ~~\text{measurable}.$$
Next, we define a map $\theta:E\to\Pi$. 
For each $y\in E$, let $z=h_{y,w}^s(y)$, where $h_{y,w}^s$ is the local stable holonomy map defined in Section~\ref{Sec:Pesin}.
Since $\Pi$ is a rectangle and $E$ intersects $\Pi$ $s$-leaf-wise, we have 
$$z\in W^{cu}_{\rm loc}(w) \cap W^{s}_{\rm loc}(y)\in \Pi\cap W^{s}_{\rm loc}(y) \subset E\cap W^{s}_{\rm loc}(y).$$
Then we define
$$\theta(y)=h_{w,y}^s \circ \theta_w \circ h_{y,w}^s(y).$$
By Theorem~\ref{Thm:holonomy}, if $y$ is sufficiently close to $w$, then the Jacobians of $h^{s}_{w,y}$ and $h^{s}_{y,w}$ are close to $1$.
By Corollary~\ref{Cor:SRB-U}, by choosing $\alpha>0$ sufficiently small, the density of the conditional measures of $\mu$ with respect to the normalized restrictions of  $\Leb_{W^u}$ is close to $1$.
Thus, for any given $\varepsilon>0$, by choosing $\alpha>0$ sufficiently small we have
$$|{\rm Jac}(\theta)(y)-1| \leq \varepsilon,~~\forall y\in E.$$
The fact that $\theta(y)\in W^{cu}_{\rm loc}(y)\cap \Pi$ follows directly from the definition of $\theta$.
\end{proof}

\subsubsection{Global $\varepsilon$-measure preserving map $\theta:~A\to M$}\label{SEC:gmpm}

Recall that the disjoint $(\alpha,s)$-rectangles $\{\Pi_i^s\}_{i=1}^{N_*}$ form a cover of ${\rm PES}_{\ell}$, and that $\mu(\PES_{\ell})>1-\varepsilon^2$.
Let $\Pi_*:=M\setminus \bigcup_{i=1}^{N_*}\Pi_i^s$. Then, $\mu(\Pi_*)\le\varepsilon^2$.
Let $\mathcal P$ be the partition as in Theorem~\ref{Thm:partition-VWB}. 
Since $\mu(\Pi_*)\le\varepsilon^2$, for any $N>N'\geq N_0$ one has 
\begin{equation}\label{e.remaining-set}
	\text{for}~\varepsilon\text{-almost every}~P\in \bigvee_{n=N'}^{N}T^{-n} \mathcal P,~~\mu(P\cap \Pi_*)\leq \varepsilon\mu(P).
\end{equation}

Since the measure $\mu$ has the K-property, it follows from \cite[Chapter 10.2]{CFS82} or \cite[Theorem 4.2]{LLS16} that $\mu$ is K-mixing. 
Consequently, there exists $N_0\in \NN$ such that for every $N>N'\geq N_{0}$ and for $\varepsilon$-almost every $P\in \bigvee_{n=N'}^{N}T^{-n} \mathcal P$, one has
\begin{equation}\label{e.K_mixing}
|\mu_P(\Pi_j^s)-\mu(\Pi_j^s)|=\left|\frac{\mu(P\cap \Pi_j^s)}{\mu(P)}-\mu(\Pi_j^s) \right| \leq \varepsilon \cdot \min_{1\leq i\leq N_*}\mu(\Pi_i^s)\le\varepsilon\mu(\Pi_j^s),~~\forall j=1,\cdots,N_*.
\end{equation}
By Proposition~\ref{Pro:s-leaf-wise}, enlarging $N_0$ if necessary, for every $N>N'\ge N_0$ and every $i=1,\cdots,N_*$, the following holds: for $\varepsilon$-almost every $P\in \bigvee_{n=N'}^{N}T^{-n} \mathcal P$, there exists $E_i\subset P$ intersecting $\Pi_i^s$ $s$-leaf-wise and 
\begin{equation}\label{e.E-i-P}
\mu_P(E_i)=\frac{\mu(E_i)}{\mu(P)}\ge 1-\frac{\varepsilon}{2N_* }\cdot  \min_{1 \leq i \leq N_*}\mu(\Pi_i).
\end{equation}
Note that $2\varepsilon$-almost every $P\in \bigvee_{n=N'}^{N}T^{-n} \mathcal P$ satisfies both \eqref{e.remaining-set} and \eqref{e.E-i-P}.

\bigskip

Fix such a set $P \in \bigvee_{n=N'}^{N} T^{-n} \mathcal P$.
Let $E=E_1\cap E_2\cap \cdots \cap E_{N_*}\cap (M\setminus \Pi_*)$. Then $E$ satisfies the following properties:
\begin{itemize}
\item $E\subset P$ and intersects $\Pi_i^s$ $s$-leaf-wise for every $1 \leq i \leq N_*$, since $E\cap \Pi_i^s=E_i\cap \Pi_{i}^s$;
\item $\mu(E)\ge (1-2\varepsilon)\mu(P)$. This follows from \eqref{e.remaining-set} and \eqref{e.E-i-P}:
\begin{align*}
\mu(P\setminus E)=\mu\Big(\bigcup_{i=1}^{N_*}(P\setminus E_i)\cup (P\cap\Pi_*)\Big)\le \sum_{i=1}^{N_*}\mu(P\setminus E_i)+\mu(P\cap\Pi_*)<2\varepsilon\mu(P).
\end{align*}
\item for every $1 \leq i \leq N_*$, assume that $\varepsilon<1/4$ and by the second item,
$$\mu\big((P\setminus E)\cap \Pi_i^s\big)\leq \mu(P)\cdot \sum_{i=1}^{N_{*}} \mu(P\setminus E_i) \leq \varepsilon \cdot \mu(P) \mu(\Pi_i^s)\leq 2\varepsilon\cdot \mu(E)\mu(\Pi_i^s).$$
\end{itemize}
Therefore, by K-mixing (see \eqref{e.K_mixing}), for every $1 \leq i \leq N_*$ we have
\begin{equation*}
\begin{aligned}
	\left|\frac{\mu(E\cap \Pi_i^s)}{\mu(E)}-\mu(\Pi_i^s) \right| &=\left|\frac{\mu(E\cap \Pi_i^s)-\mu(E)\mu(\Pi_i^s)}{\mu(E)}\right| \\
	&=  \left|\frac{\mu(P\cap \Pi_i^s)-\mu\big((P\setminus E)\cap \Pi_i^s\big)-\big(\mu(P)-\mu(P\setminus E)\big)\mu(\Pi_i^s)}{\mu(E)}\right|\\
	&\leq \left| \frac{\mu(P)}{\mu(E)}\right| \cdot \left(\left|\frac{\mu(P\cap \Pi_i^s)-\mu(P)\mu(\Pi_i^s)}{\mu(P)}\right| + \left| \frac{\mu(P\setminus E)\mu(\Pi_i^s)-\mu((P\setminus E)\cap \Pi_i^s)}{\mu(P)} \right|\right)\\
	&\le  \big((1-2\varepsilon)^{-1}+2+1\big)\cdot \varepsilon \cdot \mu(\Pi_i^s)\leq 6\varepsilon \cdot \mu(\Pi_i^s).
\end{aligned}
\end{equation*}
In particular, we have that $\mu(E\cap \Pi_i^s)>0$.

By Proposition~\ref{Pro:local-map}, for each $i=1,\ldots,N_*$, there exists an invertible measurable map 
$$\theta_i:(E\cap \Pi_i^s,~\mu_{E\cap \Pi_i^s}) \to (\Pi_i,\mu_{\Pi_i^s})$$
satisfying the conclusions of Proposition~\ref{Pro:local-map}.
We then define a bijection
$$\theta:(P,~\mathcal B_P,\mu_P)\to (M,\mathcal B,\mu)$$
by setting $\theta(z)=\theta_i(z)$ for each $z\in E\cap \Pi_i$.
We now show that $\theta$ satisfies \eqref{e.precise-formula} and \eqref{e.13-varpesilon}.

\begin{Lemma}\label{Lem:13varepsilon}
$\theta$ is $13\varepsilon$-measure preserving.
\end{Lemma}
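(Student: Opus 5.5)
The plan is to verify \eqref{e.13-varpesilon} with the good set taken to be $E$ itself; recall $\mu_P(E)\ge 1-2\varepsilon$ was shown above. Since $E=\bigsqcup_{i=1}^{N_*}(E\cap\Pi_i^s)$ and $\theta=\theta_i$ on $E\cap\Pi_i^s$, it suffices to treat a measurable $B\subset E$ by splitting it as $B=\bigsqcup_i B_i$ with $B_i:=B\cap\Pi_i^s$. Because the $\Pi_i^s$ are disjoint and $\theta_i(B_i)\subset\Pi_i^s$, we have $\mu(\theta(B))=\sum_i\mu(\theta_i(B_i))$. It is therefore enough to prove, for each $i$ separately, the two-sided bound
$$(1-13\varepsilon)\,\mu_P(B_i)\le \mu(\theta_i(B_i))\le (1+13\varepsilon)\,\mu_P(B_i),$$
and then sum over $i$.

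Fix $i$. By Proposition~\ref{Pro:local-map}, $\theta_i:(E\cap\Pi_i^s,\mu_{E\cap\Pi_i^s})\to(\Pi_i^s,\mu_{\Pi_i^s})$ has Jacobian in $[1-\varepsilon,1+\varepsilon]$. This gives
$$\frac{\mu(\theta_i(B_i))}{\mu(\Pi_i^s)}\in[1-\varepsilon,1+\varepsilon]\cdot\frac{\mu(B_i)}{\mu(E\cap\Pi_i^s)}.$$
Next we compare $\mu(E\cap\Pi_i^s)$ with $\mu(E)\mu(\Pi_i^s)$, using the estimate derived just before the lemma from K-mixing \eqref{e.K_mixing}:
$$\big|\mu(E\cap\Pi_i^s)/\mu(E)-\mu(\Pi_i^s)\big|\le 6\varepsilon\mu(\Pi_i^s).$$
Hence $\mu(E\cap\Pi_i^s)\in[1-6\varepsilon,1+6\varepsilon]\,\mu(E)\mu(\Pi_i^s)$. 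Substituting this into the previous display yields
$$\mu(\theta_i(B_i))\in\frac{[1-\varepsilon,1+\varepsilon]}{[1-6\varepsilon,1+6\varepsilon]}\cdot\frac{\mu(B_i)}{\mu(E)}.$$
Finally, $\mu(E)\in[1-2\varepsilon,1]\,\mu(P)$, so $\mu(B_i)/\mu(E)$ and $\mu_P(B_i)=\mu(B_i)/\mu(P)$ differ by a factor lying in $[1,(1-2\varepsilon)^{-1}]$.

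Multiplying the three factors gives a ratio $\mu(\theta_i(B_i))/\mu_P(B_i)$ lying between $(1-\varepsilon)/(1+6\varepsilon)$ and $(1+\varepsilon)/\big((1-6\varepsilon)(1-2\varepsilon)\big)$. For $\varepsilon$ small, both bounds are within $13\varepsilon$ of $1$ to first order: the deviations are $7\varepsilon$ below and $9\varepsilon$ above. Summing over $i$ then gives \eqref{e.13-varpesilon} for every $B\subset E$.

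The only delicate points are the following. First, one must make sure the normalizations in Proposition~\ref{Pro:local-map} are those of $\mu_{E\cap\Pi_i^s}$ and $\mu_{\Pi_i^s}$ as measures, not merely leafwise Lebesgue measures; this is where the SRB density estimate (Corollary~\ref{Cor:SRB-U}) and the holonomy Jacobian estimate (Theorem~\ref{Thm:holonomy}) are absorbed into the $\varepsilon$. Second, one must track the constants so that the product of the error factors stays within the allowed $13\varepsilon$, assuming $\varepsilon$ is sufficiently small (for instance $\varepsilon<1/100$).
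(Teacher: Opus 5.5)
Your proposal is correct and follows essentially the same route as the paper: split $B\subset E$ along the disjoint rectangles $\Pi_i^s$, then combine three inputs and sum over $i$. The inputs are the Jacobian bound for $\theta_i$ from Proposition~\ref{Pro:local-map}, the $6\varepsilon$ estimate on $\mu(E\cap\Pi_i^s)/\mu(E)$, and $\mu(E)\ge(1-2\varepsilon)\mu(P)$. The only difference is bookkeeping: you use multiplicative two-sided bounds, which also handle $\mu(B_i)=0$ cleanly, whereas the paper uses an additive triangle-inequality bootstrap to get $10\varepsilon$ relative to $\mu(E)$ and only then passes to $\mu(P)$ to reach $13\varepsilon$.
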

\begin{proof}
For each $B\subset E$, let $B_i=B \cap \Pi_i^s$. 
By the Jacobian estimate in  Proposition~\ref{Pro:local-map}, we have
$$\left|\frac{\mu_{\Pi_i^s}(\theta_i(B_i))}{\mu_{E\cap \Pi_i}(B_i)}-1\right|\le\varepsilon$$
By the definition of $\mu_E$ and $\mu_{E\cap\Pi_i^s}$, this implies
\begin{equation}\label{eq:13e}
	\left|\frac{\mu(\theta_i(B_i))\mu(E\cap \Pi_i^s)}{\mu(B_i)\mu(\Pi_i^s)}-1 \right|\leq \varepsilon.
\end{equation}
Recall the set $E\subset P$ satisfies 
\begin{equation}\label{eq:13e2}
		\left|\frac{\mu(E\cap \Pi_i^s)}{\mu(E)}-\mu(\Pi_i^s) \right|\leq 6\varepsilon \mu(\Pi_{i}^s).
\end{equation}
Therefore, by \eqref{eq:13e}, $\theta(B_i)=\theta_i(B_i)$ and \eqref{eq:13e2} we have that
\begin{align*}
\left|\frac{\mu(\theta(B_i))\mu(E)}{\mu(B_i)}-1 \right|&\leq 
\left|\frac{\mu(\theta(B_i)) }{\mu(B_i)} \Big(\mu(E)-\frac{\mu(E\cap \Pi_i)}{\mu(\Pi_i)} \Big) \right|+
\left|\frac{\mu(\theta(B_i))\mu(E\cap \Pi_i)}{\mu(B_i)\mu(\Pi_i)}-1 \right|  \\
&\leq \left|\frac{\mu(\theta(B_i))\mu(E)}{\mu(B_i)\mu(\Pi_i)}\Big(\frac{\mu(E\cap \Pi_i^s)}{\mu(E)}-\mu(\Pi_i^s)  \Big) \right|+\varepsilon\\
&\leq \left|\frac{\mu(\theta(B_i))\mu(E)}{\mu(B_i)}\right| 6\varepsilon+\varepsilon.
\end{align*}
Hence,
$$\left|\frac{\mu(\theta(B_i))\mu(E)}{\mu(B_i)}-1 \right|\leq10\varepsilon,$$
and therefore
$$\left|\frac{\mu(\theta(B))\mu(E)}{\mu(B)}-1 \right|\leq10 \varepsilon.$$
Since $\mu(E)\geq (1-2\varepsilon)\mu(P)$, it follows that
$$\left|\frac{\mu(\theta(B))\mu(P)}{\mu(B)}-1 \right|\leq13 \varepsilon.$$
for sufficiently small $\varepsilon > 0$.
This establishes \eqref{e.13-varpesilon} and completes the proof of the lemma. 
\end{proof}

\begin{Lemma}\label{Lem:d-bar-prepare}
Let $P$ be the set defined above, and let $\theta : P \to M$ be the map introduced above.
Then, for any $n\ge 0$,
$$\mu\left(\Big\{x:\frac{1}{n}\sum_{j=0}^{n-1}\mathbb{1}_{[(T^j\mathcal P)_{x}\neq (T^j\mathcal P|P)_{(\theta^{-1}x)}]}(x)\leq \varepsilon \Big\}\right)\geq 1-\varepsilon.$$
\end{Lemma}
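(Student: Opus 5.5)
The plan is to exploit the fact that $\theta$ moves each point only along its local center-unstable manifold, inside a rectangle of size $\alpha$. Under the map $T^{-1}$ (the VWB statement is for $T^{-1}$, so the relevant iterates are $T^{-j}$, $j\ge 0$, i.e.\ the partitions $T^j\mathcal P$ are read along backward orbits), the unstable direction contracts exponentially. The center direction stays of size comparable to $\alpha$ up to subexponential factors. So for $y\in E\cap\Pi_i^s$ and $x=\theta(y)\in W^{cu}_{\loc}(y)\cap \Pi_i^s$, we write $x=T^{s}(x')$ with $x'\in W^u_{\loc}(y)$ and $|s|\lesssim \alpha$. We then split the comparison of $(T^j\mathcal P)_x$ with $(T^j\mathcal P)_y$ into two parts: an unstable part, which converges exponentially, and a flow-direction part, which is a time shift of size $|s|$.

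\textbf{Step 1: the unstable part.} By Theorem~\ref{Thm:Pesin-manifold}, $d(T^{-j}x',T^{-j}y)\le 2L e^{-j(\chi-\varepsilon_0)}\alpha$. Consequently, the symbols of $x'$ and $y$ in $T^j\mathcal P$ can differ only if $T^{-j}y\in B(\partial\mathcal P,e^{-\lambda j})$, once $j\ge j_0$ with $j_0$ fixed. We use the set $F'=\bigcup_{j\ge j_0}T^{j}B(\partial\mathcal P,e^{-\lambda j})$. Its measure is at most the tail of the sum in \eqref{e.super-regular}, so it can be made smaller than $\varepsilon^3$. Points outside it have at most $j_0$ disagreements, which is at most $\varepsilon n/2$ once $n\ge 2j_0/\varepsilon$. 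Small $n$ is absorbed by shrinking $\alpha$ so that the first $j_0$ iterates of all points involved stay within $e^{-\lambda j_0}$ of each other. Since $\theta$ is $13\varepsilon$-measure preserving (Lemma~\ref{Lem:13varepsilon}), the bad set in $M$ pulls back to a set of small $\mu_P$-measure. One subtlety is that $y$ lies in $P$, not in a $\mu$-typical position. We therefore phrase the exceptional sets on the image side $x\in M$, where $\mu$ is invariant, and transport them through $\theta^{-1}$ using the measure-preserving estimate.

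\textbf{Step 2: the flow-direction part.} We must compare $T^{-j}x=T^{s}T^{-j}x'$ with $T^{-j}x'$. The flow displacement is $|s|\cdot\|X(T^{-j}x')\|\le |s|\cdot\sup\|X\|\lesssim\alpha$, uniformly in $j$, because the vector field is bounded (near singularities it is even smaller). So a disagreement at time $j$ forces $T^{-j}x\in B(\partial\mathcal P, C\alpha)$. By Birkhoff's ergodic theorem applied to $\chi_{B(\partial\mathcal P,C\alpha)}$, whose integral tends to $0$ as $\alpha\to0$ because $\mu(\partial\mathcal P)=0$ by \eqref{e.super-regular}, the frequency of such $j$ is at most $\varepsilon/2$ for $\mu$-most $x$ and all $n\ge n_0$. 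For $n<n_0$, we again shrink $\alpha$ so that the orbit segments up to time $n_0$ are uniformly close.

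\textbf{Main obstacle.} The hardest point is that the estimate must hold for all $n$ simultaneously, while $\alpha$ (hence the rectangles and $\theta$) must be fixed before $n$. The plan is to combine three ingredients. The maximal ergodic inequality (Birkhoff uniformity) gives a set of measure $\ge 1-\varepsilon/2$ on which the averages are small for every $n\ge n_0$. The finitely many short times are handled by continuity. The first-hit times from Step 1 are controlled by the summability condition \eqref{e.super-regular}. Taking the union of the exceptional sets then gives the claimed bound $1-\varepsilon$.
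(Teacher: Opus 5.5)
\textbf{There is a genuine gap in Step~1.} The exceptional set there, $F'=\bigcup_{j\ge j_0}T^jB(\partial\mathcal P,e^{-\lambda j})$, is a condition on $y=\theta^{-1}(x)\in P$ (or on the intermediate point $x'\in W^u_{\loc}(y)$), not on $x$.

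What you need is that $\mu(\theta(F'\cap E))$ is small. By Lemma~\ref{Lem:13varepsilon} this is comparable to $\mu_P(F')=\mu(F'\cap P)/\mu(P)$. Smallness of $\mu(F')$ says nothing about this ratio for the fixed atom $P$, which may have tiny measure. Your remark that ``the bad set in $M$ pulls back to a set of small $\mu_P$-measure'' controls $\mu_P(\theta^{-1}F')=\mu_P(\{y:\theta(y)\in F'\})$, which is a different set.

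Nor can you move this condition to the image side, as you propose. The points $x$ and $x'$ differ by a flow displacement $T^{s}$ with $|s|\sim\alpha$, and this does not shrink under $T^{-j}$. So $T^{-j}x$ is not $e^{-\lambda j}$-close to $T^{-j}y$, and the summable exponential-scale set has no image-side counterpart.

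You could patch this by adding $\mu(P\cap F')\le\varepsilon\mu(P)$ to the conditions defining good atoms. That holds for $\varepsilon$-almost every $P$ by the Markov-over-atoms argument of Proposition~\ref{Pro:s-leaf-wise}, but it changes the hypotheses on the fixed $P$.

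\textbf{The exponential scale is simply unnecessary.} Your Steps~1 and~2 together already give $d(T^{-j}x,T^{-j}\theta^{-1}x)\le (2L+C)\alpha$ for all $j\ge 0$. The paper fixes $\delta$ in advance with $\mu(B(\partial\mathcal P,\delta))<\varepsilon^2$ and then takes $\alpha$ small so that this distance is at most $\delta$. Every disagreement at time $j$ then forces $T^{-j}x\in B(\partial\mathcal P,\delta)$, which is an image-side condition. Markov's inequality and invariance give
\[
\mu\Big(\Big\{x:\tfrac{1}{n}\textstyle\sum_{j=0}^{n-1}\chi_{B(\partial\mathcal P,\delta)}(T^{-j}x)>\varepsilon\Big\}\Big)\le \frac{\mu(B(\partial\mathcal P,\delta))}{\varepsilon}<\varepsilon
\]
for every $n$. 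You only need to add $\mu(\Pi_*)\le\varepsilon^2$ for points outside $\bigcup_i\Pi_i^s$.

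Consequently, your ``main obstacle'' is not an obstacle. The lemma lets the good set depend on $n$, and the Markov bound is uniform in $n$, so no Birkhoff/Egorov step, no threshold $n_0$, and no separate short-time case is needed. The maximal ergodic inequality, correctly used, would also give all $n\ge 1$ at once with the same bound.

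As written, that part is also circular. Your $n_0$ depends on $B(\partial\mathcal P,C\alpha)$, which changes when you shrink $\alpha$ to treat $n<n_0$. You must fix the scale $\delta$ before choosing $\alpha$.
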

\begin{proof}
By the assumption on $\mathcal P$, for the given $\varepsilon > 0$, there exists $\delta > 0$ such that $\mu(B(\partial \mathcal P,\delta))<\varepsilon^2$.
Provided that $\alpha > 0$ is sufficiently small, for every $x \in \bigcup_{i=1}^{N_*} \Pi_i^s$, by the definition of $\theta$, we have $\theta^{-1}(x)\in W^{cu}_{\loc}(x)\cap \Pi_{i}^s$.
Hence, 
$$d(T^{-n}(x),T^{-n}(\theta^{-1}(x)))\leq \delta,~~\forall n\ge 0.$$
If $(T^j\mathcal P)_{x}\neq (T^j\mathcal P|P)_{(\theta^{-1}x)}$, we must have $T^{-j}(x)\in B(\partial\mathcal P,\delta)$. 
Therefore, we have
$$\mu\left(\Big\{x\in \bigcup_{i=1}^{N_*} \Pi_{i}:\frac{1}{n}\sum_{j=0}^{n-1}\mathbb{1}_{[(T^j\mathcal P)_x\neq (T^j\mathcal P|P)_{(\theta^{-1}x)}]}(x)> \varepsilon \Big\}\right)\leq \mu\left( \Big\{x: \frac{1}{n} \sum_{j=0}^{n-1}\mathbb{1}_{T^jB(\partial \mathcal P,\delta)}(x) >\varepsilon\Big\}\right).$$
By the invariance of $\mu$, one has that
\begin{align*}
\mu\left( \Big\{x: \sum_{j=0}^{n-1}\mathbb{1}_{T^jB(\partial \mathcal P,\delta)}(x) >n\varepsilon\Big\}\right)\le
\frac{1}{\varepsilon n}\int \sum_{j=0}^{n-1}\mathbb{1}_{T^jB(\partial \mathcal P,\delta)}(x){\rm d}\mu(x)\le \frac{1}{\varepsilon n} n\mu(B(\partial \mathcal P,\delta))<\varepsilon.
\end{align*}
This establishes \eqref{e.precise-formula} and completes the proof of the lemma. 
\end{proof}

\begin{proof}[Proof of Theorem~\ref{Thm:partition-VWB}]
For given $\varepsilon>0$, recall from Section \ref{SEC:gmpm} that there exists  $N_0=N_0(\varepsilon)$ such that for any $n\ge 0$ and any $N'\ge N\ge N_0$, for $\varepsilon$-almost every $P\in\bigvee_{k=N}^{N'}T^{-k}\mathcal P$ there exists a map $\theta:P\to M$ satisfying:
\begin{enumerate}
\item[(1)]  $\theta$ is $13\varepsilon$-measure preserving; and
\item[(2)]  for partitions $\{T^{i}\mathcal P\}_{i=1}^n$ in $(X,\mathcal B,\mu)$ and partitions $\{T^{i}\mathcal P|P\}_{i=1}^n$ in $(P,~\mathcal{B_P},\mu_P)$,
	$$\mu\left(\Big\{x:\frac{1}{n}\sum_{j=0}^{n-1}\mathbb{1}_{[(T^j\mathcal P)_x\neq (T^j\mathcal P|P)_{(\theta^{-1}x)}]}(x)\leq \varepsilon \Big\}\right)\geq 1-\varepsilon.$$
\end{enumerate}
By Lemma~\ref{Lem:Ornstein}, it follows that
$$ \bar d\big(\{T^{i}\mathcal P\}_{i=1}^n,\{T^{i}\mathcal P|P\}_{i=1}^n\big)<13\cdot 16\varepsilon<300\varepsilon.$$
This proves that $\mathcal P$ is a very weak Bernoulli partition for $T^{-1}$.
\end{proof}

\appendix

\section{Equivalent descriptions of weakly mixing}

A measurable subset $E \subset \mathbb{R}^+$ is said to have (asymptotic) density zero if 
$$\lim_{t\to+\infty}\frac{1}{t}{\rm Leb}(E\cap[0,t])=0.$$
\begin{Proposition}\label{Pro:density-zero}
A measure-preserving flow $(X,\mathcal B,\mu,\{T^t\})$ is weakly mixing if and only if for any measurable sets $A,B$, there is a subset $E\subset\mathbb R^+$ of density zero such that
$$\lim_{E\not\ni t \to \infty} \mu(T^{-t}(A) \cap B) = \mu(A) \cdot \mu(B).$$
\end{Proposition}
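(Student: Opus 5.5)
The plan is to reduce to the classical Koopman--von Neumann lemma for Cesàro averages of nonnegative bounded functions on $\mathbb R^+$. That lemma states that a bounded measurable $g\ge 0$ satisfies $\frac1t\int_0^t g(s)\,{\rm d}s\to 0$ if and only if there is a density-zero set $E\subset\mathbb R^+$ with $g(s)\to 0$ as $s\to\infty$, $s\notin E$. For fixed $A,B$ we apply it to
$$g_{A,B}(s):=|\mu(T^{-s}(A)\cap B)-\mu(A)\mu(B)|.$$
The function $g_{A,B}$ is measurable in $s$ (by Fubini, exactly as in the footnote in the proof of Proposition~\ref{Pro:s-saturated}) and bounded by $1$. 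Since $T^s$ preserves $\mu$, we have $\mu(A\cap T^{-s}B)=\mu(T^{s}A\cap B)$. Note that the definition of weak mixing in the introduction uses $\mu(A\cap T^{-s}B)$, whereas the proposition uses $\mu(T^{-s}A\cap B)$. We reconcile the two by swapping the roles of $A$ and $B$, since both conditions are required for all pairs of measurable sets. With this, the proposition is exactly the lemma applied to every $g_{A,B}$.

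First, the easy direction (density-zero convergence implies Cesàro convergence). Given $\eta>0$, choose $t_1$ with $g(s)<\eta$ for $s\ge t_1$, $s\notin E$. Then split $\int_0^t g$ into three pieces: the part over $[0,t_1]$ contributes at most $t_1$; the part over $E\cap[0,t]$ contributes at most ${\rm Leb}(E\cap[0,t])$ because $g\le 1$; and the remaining part contributes at most $\eta t$. Dividing by $t$ and letting $t\to\infty$ gives $\limsup \le \eta$, and $\eta$ was arbitrary.

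Second, the harder direction (Cesàro convergence implies convergence off a density-zero set). Set $E_k:=\{s:\ g(s)\ge 1/k\}$. By Chebyshev, $\frac1t{\rm Leb}(E_k\cap[0,t])\le \frac kt\int_0^t g\to 0$, so each $E_k$ has density zero, and $E_1\subset E_2\subset\cdots$. Choose increasing times $t_1<t_2<\cdots$ with $t_k\to\infty$ such that $\frac1t{\rm Leb}(E_{k+1}\cap[0,t])<\frac1{k+1}$ for all $t\ge t_k$. Then define
$$E:=\bigcup_k E_{k+1}\cap[t_k,t_{k+1}).$$
Off $E$, for $s\in[t_k,t_{k+1})$ we have $g(s)<1/(k+1)$, so $g\to 0$ off $E$. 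To check density zero, take $t\in[t_k,t_{k+1})$. Then $E\cap[0,t]\subset (E_k\cap[0,t_k])\cup(E_{k+1}\cap[0,t])$, using monotonicity $E_j\subset E_{k}$ for $j\le k$. This gives
$$\frac1t{\rm Leb}(E\cap[0,t])\le \frac1k+\frac1{k+1}\to 0.$$

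The main obstacle is this diagonal construction, namely ensuring the nested structure is used correctly so that the pieces over earlier intervals are all absorbed into $E_k\cap[0,t]$. Everything else is bookkeeping. Measurability of $E$ follows from the measurability of $g$.
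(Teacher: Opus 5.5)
Your proof is correct and follows the paper's route: both reduce the proposition to the Koopman--von Neumann lemma for bounded measurable functions on $\mathbb R^+$ (the paper's Lemma~\ref{Lemma:measurable-function}), applied to $s\mapsto \mu(T^{-s}(A)\cap B)-\mu(A)\mu(B)$, and your easy direction is the same as the paper's. For the hard direction, your diagonal set $E=\bigcup_k E_{k+1}\cap[t_k,t_{k+1})$ supplies exactly the step the paper's contrapositive takes for granted, namely that if no density-zero exceptional set exists then some single level set $\{|f|\ge\varepsilon_0\}$ has positive upper density, so your version is the more complete one.
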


\begin{Corollary}\label{Cor:manysets-intersect}
Assume $(X,\mathcal B,\mu,\{T^t\})$ is weakly mixing. 
For any measurable sets $B,U_1,U_2,\cdots,U_N$ with positive measures, there exist $\delta>0$ and a sequence $t_n\to\infty$ such that
$$\mu(T^{t_n}B\cap U_i)>\delta>0,~~\forall n>0,~~\forall 1\leq i\leq N.$$
\end{Corollary}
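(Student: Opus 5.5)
The plan is to deduce the corollary directly from Proposition~\ref{Pro:density-zero}, applied to finitely many pairs of sets. For each $1\le i\le N$, apply Proposition~\ref{Pro:density-zero} to the pair $(A,B')=(U_i,B)$. This gives a density-zero set $E_i\subset\mathbb R^+$ such that
\[
\lim_{E_i\not\ni t\to\infty}\mu(T^{-t}(U_i)\cap B)=\mu(U_i)\mu(B).
\]
By invariance of $\mu$, we have $\mu(T^{t}B\cap U_i)=\mu(T^{t}(B\cap T^{-t}U_i))=\mu(B\cap T^{-t}U_i)$. So the same limit statement holds for $\mu(T^tB\cap U_i)$ along $t\notin E_i$.

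Set $E=\bigcup_{i=1}^N E_i$. A finite union of density-zero sets has density zero, since
\[
\frac1t{\rm Leb}(E\cap[0,t])\le\sum_{i=1}^N\frac1t{\rm Leb}(E_i\cap[0,t])\to0.
\]
Hence $\mathbb R^+\setminus E$ is unbounded. Indeed, if $\mathbb R^+\setminus E\subset[0,T_0]$, then ${\rm Leb}(E\cap[0,t])\ge t-T_0$, and the density of $E$ would be $1$.

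Put $\delta:=\frac12\min_{1\le i\le N}\mu(U_i)\mu(B)>0$. Because the limits hold along $t\notin E$ (a subset of each $\mathbb R^+\setminus E_i$), there is $T_1$ such that for all $t\notin E$ with $t>T_1$ and all $i$,
\[
\mu(T^tB\cap U_i)>\mu(U_i)\mu(B)-\delta\ge\delta.
\]
Since $\mathbb R^+\setminus E$ is unbounded, we can choose $t_n\in\mathbb R^+\setminus E$ with $t_n>\max\{T_1,n\}$. This gives $t_n\to\infty$ with the required uniform lower bound for every $n$ and every $i$.

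There is essentially no obstacle here. The only points needing care are the direction of the time shift, which invariance handles, and the observation that the complement of a density-zero set is unbounded, so that a sequence $t_n\to\infty$ outside $E$ exists.
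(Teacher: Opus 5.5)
Your proof is correct and takes the same route as the paper: apply Proposition~\ref{Pro:density-zero} to each pair $(U_i,B)$, take the finite union $E=\bigcup_i E_i$ (still of density zero), choose $\delta<\min_i\mu(B)\mu(U_i)$, and select $t_n\to\infty$ outside $E$. You also make explicit two points the paper leaves implicit: the invariance identity $\mu(T^tB\cap U_i)=\mu(B\cap T^{-t}U_i)$, and the fact that the complement of a density-zero set is unbounded, which is what guarantees such a sequence $t_n$ exists.
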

\begin{proof}
Take $0<\delta<\min_{1\le i\le N} \mu(B)\mu(U_i)$. 
For each $1\leq i\leq N$, by Proposition~\ref{Pro:density-zero}, there exists a set $E_i\subset \mathbb{R}^+$ of density $0$  such that $  \mu(T^{-t}(U_i) \cap B) \to \mu(B)\mu(U_i)$ as $E_i\not\ni t\to +\infty$.
Let $E=\bigcup_{1\le i\le N}E_i$. 
Then, $E$ has density $0$, and $\mu(T^{t_n}B\cap U_i)>\delta$ for all sufficiently large $t\notin E$.
\end{proof}
The proof of Proposition~\ref{Pro:density-zero} is based on the following lemma concerning measurable functions.

\begin{Lemma}\label{Lemma:measurable-function}
Let $f:~\mathbb R^+\to \mathbb R$ be a bounded measurable function. 
Then 
$$\lim_{t\to\infty}\frac{1}{t}\int_0^t|f(s)|{\rm d}s=0$$
if and only if there is a set $E\subset\mathbb R^+$ of density $0$ such that $~\lim_{E\not\ni t \to \infty} f(t)=0$.
\end{Lemma}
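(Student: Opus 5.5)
The proof is a Koopman–von Neumann style density argument. Put $F(t)=\frac{1}{t}\int_0^t|f(s)|\,{\rm d}s$ and $C=\sup|f|<\infty$.

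\emph{($\Leftarrow$)} Suppose $E$ has density zero and $f(t)\to 0$ as $t\to\infty$ with $t\notin E$. Fix $\eta>0$ and choose $T_0$ so that $|f(t)|<\eta$ for every $t\notin E$ with $t>T_0$. For $t>T_0$ we split $[0,t]$ into three parts: $[0,T_0]$, $E\cap[T_0,t]$ and $[T_0,t]\setminus E$. This gives
$$F(t)\le \frac{CT_0}{t}+\frac{C\,{\rm Leb}(E\cap[0,t])}{t}+\eta .$$
The first two terms tend to $0$ as $t\to\infty$, so $\limsup_{t\to\infty} F(t)\le\eta$. Since $\eta$ is arbitrary, $F(t)\to 0$.

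\emph{($\Rightarrow$)} Assume $F(t)\to 0$. For $k\ge 1$ let $J_k=\{t\ge 0:\ |f(t)|\ge 1/k\}$; these are measurable and increasing in $k$. By Chebyshev,
$$\frac{1}{t}{\rm Leb}(J_k\cap[0,t])\le kF(t)\to 0,$$
so each $J_k$ has density zero. Now choose an increasing sequence $0=T_0<T_1<T_2<\cdots\to\infty$ such that
$$\frac{1}{t}{\rm Leb}(J_{k+1}\cap[0,t])<\frac{1}{k+1}\quad\text{for all } t\ge T_k .$$
This is possible because $J_{k+1}$ has density zero. Define
$$E=\bigcup_{k\ge 0}\big(J_{k+1}\cap[T_k,T_{k+1})\big),$$
which is measurable.

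\emph{Convergence off $E$.} Take $t\notin E$ with $t\in[T_k,T_{k+1})$. Then $t\notin J_{k+1}$, so $|f(t)|<1/(k+1)$. Since $k\to\infty$ as $t\to\infty$, we get $f(t)\to 0$ along $t\notin E$.

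\emph{Density of $E$ (the main step).} Fix $t\in[T_k,T_{k+1})$. Because the $J_j$ increase, for $j\le k$ we have $J_{j+1}\subset J_{k+1}$, so
$$E\cap[0,t]\subset \big(J_{k+1}\cap[0,t]\big)\cup[0,T_{k-1}] .$$
Actually the pieces with $j\le k$ lie in $J_{k+1}\cap[0,t]$ directly, so the crude bound is $E\cap[0,t]\subset J_{k+1}\cap[0,t]$. Hence
$$\frac{1}{t}{\rm Leb}(E\cap[0,t])\le\frac{1}{t}{\rm Leb}(J_{k+1}\cap[0,t])<\frac{1}{k+1},$$
using $t\ge T_k$ and the choice of $T_k$. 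Letting $t\to\infty$ forces $k\to\infty$, so $E$ has density zero.

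The only real obstacle is arranging this diagonal choice so that the later, larger sets $J_{k+1}$ dominate all earlier pieces. Monotonicity of the $J_k$ makes that automatic. Measurability is clear throughout because $f$ is measurable.
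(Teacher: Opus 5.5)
Your proof is correct. The $(\Leftarrow)$ direction is the same three-way splitting of $[0,t]$ that the paper uses.

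For $(\Rightarrow)$ the two routes differ in structure. The paper argues by contradiction. It simply asserts that if no density-zero exceptional set exists, then for some $\varepsilon_0>0$ the set $\{|f|\ge\varepsilon_0\}$ has positive upper density. Chebyshev then gives $\limsup_{t}\frac1t\int_0^t|f|\ge\varepsilon_0^2$.

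That asserted step is exactly the contrapositive of what your diagonal construction proves. If every level set $J_k$ has density zero, you glue them into a single density-zero $E$ off which $f\to 0$. So your direct argument supplies the piece the paper leaves implicit. Monotonicity of the $J_k$ keeps the density estimate clean, since $E\cap[0,t]\subset J_{k+1}\cap[0,t]$ for $t\in[T_k,T_{k+1})$. The paper's version is shorter, but it is complete only modulo this standard Koopman--von Neumann gluing step.

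One small fix is needed. With $T_0=0$, the requirement for $k=0$ is that $\frac1t{\rm Leb}(J_1\cap[0,t])<1$ for all $t>0$. This can fail: take $f=1$ on $[0,1]$ and $f=0$ afterwards. It is also meaningless at $t=0$. Impose the condition only for $k\ge 1$; it is never used for $k=0$, because only $t\to\infty$ matters. Also require, say, $T_{k+1}\ge T_k+1$, so that $T_k\to\infty$ is guaranteed.

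The first, self-corrected inclusion involving $[0,T_{k-1}]$ is true but unnecessary, and can be deleted.
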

\begin{proof}
Assume that there exists a set $E\subset \mathbb{R}^+$ of density zero such that $\lim_{E\not\ni t \to \infty} f(t)=0$. 
This means that for any $\varepsilon>0$, there is $T>0$ such that $|f(t)|<\varepsilon$ for any $E\not\ni t>T$. 
Note that 
$$\lim_{t\to\infty}\frac{1}{t}{\rm Leb}(E\cap[T,t])=0.$$
Thus,
\begin{align*}
	\lim_{t \to \infty} \frac{1}{t} \int_0^t |f(s)|{\rm d}s&=\lim_{t \to \infty} \frac{1}{t} \int_{T}^t |f(s)|{\rm d}s \\
	&\leq  \limsup_{t \to \infty} \frac{1}{t} \int_{[T,t] \cap E} |f(s)|{\rm d}s + \limsup_{t \to \infty} \frac{1}{t} \int_{[T,t] \setminus E} |f(s)|{\rm d}s \\
	&\leq   \sup_{s \in \mathbb{R}^+} |f(s)| \cdot \limsup_{t \to \infty} \frac{1}{t} \Leb(E \cap [T,t])+\varepsilon \\
	&\leq \varepsilon.
\end{align*}
The conclusion follows from the arbitrariness of $\varepsilon$.

Assume that there does not exist a set $E\subset\mathbb R^+$ of density $0$ such that $\lim_{E\not\ni t \to \infty} f(t)=0$. 
Then, there exist $\varepsilon_0>0$ and $E_0\subset\mathbb R$ such that $\limsup\limits_{t\to\infty}\Leb(E_0\cap[0,t])/t\ge\varepsilon_0$, and $|f(t)|\ge\varepsilon_0$ for any $t\in E_0$. Thus,
$$
\limsup_{t \to \infty} \frac{1}{t} \int_0^t |f(s)|{\rm d}s \geq \limsup_{t \to \infty} \frac{1}{t} \int_{E_0 \cap [0,t]} |f(s)| {\rm d}s
\geq \limsup_{t \to \infty} \frac{\varepsilon_0}{t} \operatorname{Leb}(E_0 \cap [0,t]) \geq \varepsilon_0^2>0.
$$
which is a contraction.
\end{proof}

\begin{proof}[Proof of Proposition~\ref{Pro:density-zero}]
Given two measurable sets $A$ and $B$, define
$$f(t):=\mu(A \cap T^{-t}(B)) -\mu(A)\mu(B).$$ 
Then, $f:\mathbb{R}^+ \to \mathbb{R}$ is a bounded measurable function.
It therefore suffices to apply Lemma~\ref{Lemma:measurable-function} to conclude.
\end{proof}

\begin{Definition}\label{Def:eigenfunction}
Given a measure-preserving flow $(X,\mathcal B,\mu,\{T^t\})$, a complex-valued nonzero function $f\in L^2(\mu)\setminus\{0\}$ is called an eigenfunction of the the flow $\{T^t\}$ if there is $\omega\in\mathbb R$ such that
$$f\circ T^t={\rm e}^{2\pi i\omega t}f,~~~\forall t\in\mathbb R.$$
The number $\omega$ is said to be eigenfrequency, and ${\rm e}^{2\pi i\omega}$ is said to be eigenvalue.
\end{Definition}
By Definition~\ref{Def:eigenfunction}, the constant functions are eigenfunctions corresponding to the eigenvalue $1$ (equivalently, eigenfrequency $0$).

\begin{Remark}
If $\mu$ is ergodic, then by Birkhoff ergodic theorem, eigenfunctions associated to the eigenvalue $1$ must be constant functions. 
Ergodic systems may have other eigenvalues.
\end{Remark}

\begin{Proposition}\label{Pro:eigenfunction}
A measure-preserving flow $(X,\mathcal B,\mu,\{T^t\})$ is weakly mixing, then its eigenfunctions are constant functions, hence it only has $1$ as its eigenvalue.
More precisely, if $f\in L^2(\mu)$ and $f\circ T^t={\rm e}^{2\pi i\omega t}f$ for any $t\in\mathbb R$, then $f$ is constant and $\omega=0$.
\end{Proposition}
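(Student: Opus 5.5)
Take $f\in L^2(\mu)$ with $f\circ T^t={\rm e}^{2\pi i\omega t}f$ for all $t$, and $f\neq 0$. The plan is to test the weak mixing condition of the definition on level sets of $f$. Equivalently, and more cleanly, we first extend weak mixing from indicator functions to $L^2$ functions. Then we apply it to the pair $(f,f)$ and read off a contradiction unless $f$ is constant.

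\textbf{Step 1: weak mixing for $L^2$ functions.} We show that for all $g,h\in L^2(\mu)$,
$$\frac1t\int_0^t\Big|\int g\circ T^s\,\bar h\,{\rm d}\mu-\int g\,{\rm d}\mu\int\bar h\,{\rm d}\mu\Big|\,{\rm d}s\to0 .$$
For indicators this is exactly the definition, after using invariance to rewrite $\mu(A\cap T^{-s}B)$ as $\int\chi_B\circ T^s\chi_A\,{\rm d}\mu$. The quantity inside the absolute value is sesquilinear in $(g,h)$, so the statement passes to simple functions by the triangle inequality. Both terms are bounded by $2\|g\|_2\|h\|_2$ uniformly in $s$, using $\|g\circ T^s\|_2=\|g\|_2$ and Cauchy--Schwarz. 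Hence an $\varepsilon/3$ density argument with simple functions gives the statement for general $L^2$ functions.

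\textbf{Step 2: apply it to $g=h=f$.} The integrand becomes
$$\Big|{\rm e}^{2\pi i\omega s}\|f\|_2^2-\Big|\int f\,{\rm d}\mu\Big|^2\Big|\ \ge\ \|f\|_2^2-\Big|\int f\,{\rm d}\mu\Big|^2\ \ge 0,$$
and the lower bound does not depend on $s$. Its Cesàro average therefore cannot tend to zero unless $\|f\|_2^2=|\int f\,{\rm d}\mu|^2$. By Cauchy--Schwarz, $|\int f\,{\rm d}\mu|\le\|f\|_2$, with equality only if $f$ is a.e. constant. So $f\equiv c$ with $c\ne0$. The eigen-equation then reads $c={\rm e}^{2\pi i\omega t}c$ for all $t$, which forces $\omega=0$, so the eigenvalue is $1$.

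\textbf{Main obstacle.} The only delicate point is Step 1, namely the approximation argument. It needs the uniform-in-$s$ bound on the error terms, which is supplied by the fact that $T^s$ preserves the $L^2$ norm. The definition of weak mixing is also stated for pairs of sets in a specific order, but by invariance the order is immaterial. An alternative route would use Proposition~\ref{Pro:density-zero} along a density-one set of times, but the Cesàro form used directly here avoids it.
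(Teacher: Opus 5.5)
Your proof is correct and uses the same mechanism as the paper's: weak mixing, applied in its $L^2$ form to the correlation $\int f\circ T^s\,\bar f\,{\rm d}\mu$, whose modulus is the constant $\|f\|_2^2$. The only differences are these: you treat $\omega=0$ and $\omega\neq0$ together via the reverse triangle inequality and the equality case of Cauchy--Schwarz, so you need neither the separate appeal to ergodicity nor the preliminary step $\int f\,{\rm d}\mu=0$; and you justify the passage from sets to $L^2$ functions, which the paper uses without proof.
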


\begin{proof}
Note that a weakly mixing system must be ergodic (see \cite[Remark 3.4.4]{FiH19}, for instance). 
Thus, when $\omega=0$, we must have that $f$ is constant.
We prove by contradiction that there is $\omega\neq 0$ which is an eigenfrequency of the flow. 
Since
$$\forall t\in\mathbb R,~~~\langle f, 1 \rangle = \int f {\rm d}\mu= \int f \circ \varphi^t{\rm d}\mu = \int {\rm e}^{2\pi i\omega t} f {\rm d}\mu= {\rm e}^{2\pi i\omega t} \langle f, 1 \rangle,$$
one has that $\langle f, 1 \rangle=0$, together with the fact that $\omega\neq 0$. Thus, for every $T>0$
\begin{align*}
\int |f|^2{\rm d}\mu&=\frac{1}{T}\int_0^T |e^{2\pi it\omega}|\big(\int f \cdot \bar{f}{\rm d}\mu\big){\rm d}t \\
&=\frac{1}{T}\int_0^T \left|\int e^{it\omega}f\bar{f}{\rm d}\mu\right|{\rm d}t \\
&=\frac{1}{T}\int_0^T \left|\int (f\circ\varphi^t)\bar{f}{\rm d}\mu\right|{\rm d}t
\end{align*}
Since $\langle f, 1 \rangle=0$ and  $\mu$ is weakly mixing, we have $T^{-1}\int |\int (f\circ\varphi^t)\bar{f}{\rm d}\mu|{\rm d}t\to 0$ as $ T\to +\infty$.
This implies that $\int |f|^2{\rm d}\mu=0$ and so $f=0$, which is a contradiction.
\end{proof}
%
%
%
%
%
%
%

\section*{Acknowledgments}
We thank O. Sarig, who explained many details of Bernoulli systems to us; we thank J. Buzzi and Y. Lima for much information on Bernoulli systems. 
We also thank the Tianyuan Mathematical Center in Northeast China for its support.

\vskip 5pt

\flushleft{\bf Chiyi Luo} \\
\small School of Mathematics and Statistics, 
Jiangxi Normal University, Nanchang,   330022, P. R. China\\
\textit{E-mail:} \texttt{luochiyi98@gmail.com}\\
\flushleft{\bf Dawei Yang} \\
\small School of Mathematical Sciences,  Soochow University, Suzhou, 215006, P.R. China\\
\textit{E-mail:} \texttt{yangdw@suda.edu.cn}\\
\end{document}